\newtheorem{dfn}{Definition}[section]
\newtheorem{thm}[dfn]{Theorem}
\newtheorem{lem}[dfn]{Lemma}
\newtheorem{cor}[dfn]{Corollary}
\newtheorem{prop}[dfn]{Proposition}\makeatletter
\newcommand{\ca}{{\rm Cap}}
\newcommand{\Z}{{\mathbb Z}}
\newcommand{\RR}{{\mathbb R}}
\newcommand{\R}{{\mathcal R}}
\numberwithin{equation}{section}
\title[Deviations of the intersection with general kernel]{Deviations of the intersection of Brownian Motions in dimension four with general kernel }
\author{Arka Adhikari}
\address{Arka Adhikari\hfill\break
Department of Mathematics, Stanford University, Stanford, CA, USA}
\thanks{Research supported by NSF grant DMS 2102842 (A.A.) and JSPS KAKENHI Grant-in-Aid  for Early-Career Scientists (No.~JP20K14329) (I.O.).}
\email{arkaa@stanford.edu}
\author{Izumi Okada}
\address{Izumi Okada\hfill\break
Department of Mathematics and Informatics, Faculty of Science, Chiba University, Chiba 263- 8522, Japan}
\email{iokada@math.s.chiba-u.ac.jp}
           \subjclass[2010]{60F15,60G50}
\keywords{moderate deviation, Green's function, Brownian motion, Gagliardo-Nirenberg inequality}
\begin{document}

\maketitle

\begin{abstract}
 In this paper, we find a natural four dimensional analog of the moderate deviation results of Chen \cite{Chen04} for the mutual intersection of two independent Brownian motions $B$ and $B'$. In this work, we focus on understanding the following quantity, for a specific family of kernels $H$,
\begin{equation*}
  \int_0^1 \int_0^1 H (B_s - B'_t) \text{d}t \text{d}s . 
\end{equation*}
Given  $H(z) \propto \frac{1}{|z|^{\gamma}}$ with $0 < \gamma \le 2$,  we find that the deviation statistics of the above quantity can be related to the following family of inequalities from analysis,
\begin{equation} \label{eq:maxineq}
\inf_{f: \|\nabla f\|_{L^2}<\infty} \frac{\|f\|^{(1-\gamma/4)}_{L^2} \|\nabla f\|^{\gamma/4}_{L^2}}{ [\int_{(\mathbb{R}^4)^2} f^2(x) H(x-y) f^2(y) \text{d}x \text{d}y]^{1/4}}.
\end{equation}
Furthermore, in the case that $H$ is the Green's function, the equation \eqref{eq:maxineq} will correspond to the generalized Gagliardo-Nirenberg inequality; this is used to analyze the Hartree equation in the field of partial differential equations. Thus, in this paper, we find a new and deep link between the statistics of the Brownian motion and a family of relevant inequalities in analysis.
\end{abstract}

 \tableofcontents

\section{Introduction}
\subsection{Motivation and Related Background}


{In this paper, we find a four dimensional analog of the moderate deviation results of Chen \cite{Chen04} for the mutual intersection of two independent Brownian motions; other related papers include \cite{BC04, BCR06, Chen04, Chen05, CL, KM}. 
Let $\tau_1$ and $\tau_2$ be two independent exponential random variables with rate 1 and $B,B'$ be two independent Brownian motions starting at $0$.   
Consider a kernel $H$ of the form $K *K$ with $K$ a positive function, such that 
$|H(z)| \le \frac{C}{|z|^{\gamma}}$ for some constant $C>0$ and $0 < \gamma \le 2$. 
We study the moderate deviation for the following quantity,
\begin{equation*}
\mathcal{G}_H
:=\int_{0}^{\tau_1} \int_{0}^{\tau_2} H(B_t - B'_s) \text{d}t \text{d}s. 
\end{equation*}
Then, the constant $\alpha_H$ that determines the large deviation behavior of $\mathcal{G}_H$ can be expressed as
\begin{equation*} 
2 \log \alpha_H := \lim_{n\to \infty}\frac{1}{n} \log \frac{1}{(n!)^2} \mathbb{E}[ (\mathcal{G}_H)^n].
\end{equation*}
This is related to the following optimization problem. 
For a given parameter $\theta$, denote,
\begin{equation*} 
M(\theta)
:= \sup_{ \substack{\|g\|_{L^2}=1 \\ \|\nabla g \|_{L^2}< \infty}} 
\theta
  \left[ \int_{(\mathbb{R}^4)^2} g^2(x) H(x-y) g^2(y) \text{d}x \text{d}y \right]^{1/2} - \frac{1}{2} \int_{\mathbb{R}^4} |\nabla g(x)|^2 \text{d}x. 
\end{equation*}
 $\alpha_H$ is exactly the constant for which $M(\alpha_H^{-1}) = 1$. 

When the kernel $H$ has nice scaling properties, namely, $H(z) \propto \frac{1}{|z|^{\gamma}}$ for $0 < \gamma \le 2$, 
one can check that $\alpha_H$ will be related to the optimal constant $\kappa_H$ in the following generalized Gagliardo-Nirenberg inequality (ref. \cite[Theorem 2.3]{FY15}),
\begin{equation} \label{eq:genGagnir}
 \left[ \int_{(\mathbb{R}^4)^2} g^2(x) H(x-y) g^2(y) \text{d}x \text{d}y \right]^{1/4}  \le \kappa_H \| g\|_{L^2}^{1- \gamma/4} \|\nabla g\|_{L^2}^{\gamma/4}.
\end{equation}
Indeed, in such a case, $\alpha_H = \kappa_H^2 \left(\frac{\gamma}{2}\right) \left( \frac{2\gamma}{4-\gamma}\right)^{\frac{\gamma-4}{4}}$. 
Eventually, we will obtain the following moderate deviation result: 
\begin{equation*} 
\lim_{T\to \infty} T^{-\frac{2}{\gamma}} \log P\bigg(\int_{0}^1 \int_{0}^1 H(B_t - B'_s) \text{d}t \text{d}s \ge T \bigg)
=-\frac{\gamma}{2} (\frac{4-\gamma}{4})^{\frac{4-\gamma}{\gamma}} \alpha_H^{-\frac{4}{\gamma}}
=- \kappa_H^{-\frac{8}{\gamma}}.
\end{equation*}

Furthermore, we remark that the only kernels $H$ that can satisfy an inequality of the form 
$$
 \left[ \int_{(\mathbb{R}^4)^2} g^2(x) H(x-y) g^2(y) \text{d}x \text{d}y \right]^{1/4} \le \kappa_H  \| g\|_{L^2}^{1-c} \|\nabla g\|_{L^2}^{c}
$$
must satisfy $|H(z)| = \text{O}\left( \frac{1}{|z|^{4 c}}\right)$. 
In addition, one can check directly that if $H(z) = \frac{1}{|z|^{4 -\gamma}}$, then it has a convolutional square root of the form $\frac{1}{|z|^{4- \gamma/2}}$. In this sense, we can relate the large deviation statistics of a generalized intersection of Brownian motions with the most important inequalities of the form \eqref{eq:genGagnir}.

In what follows, we will restrict to the case that our kernel is the Green's function of the Brownian motion in $d=4$, that is, $G(x) \propto |x|^{-2}$ 
since the proof is the same if $H(z) \propto \frac{1}{|z|^{\gamma}}$ with $0 < \gamma \le 2$.  In this case, we will use $\mathcal{G}$ to denote our central quantity of interest.
Before we proceed with discussing the details of the proof and consequences of our results, we give some comments on why introducing this is the natural interaction between two Brownian motions in $d=4$. 
Although one can make sense of the notion of self-intersection in $ d=4$, the answer is not interesting since two Brownian motions are with exceedingly high probability, will only intersect finitely many times before never intersecting again. Thus, the mutual intersection does not have good scaling properties in $d=4$. 

By contrast, one would expect that self-intersection moderated by the Green's function kernel in $d=4$ would have the same scaling behavior as the usual self-intersection in $d=2$. 
If we apply the self-similar scaling $B_{\cdot} \to  \frac{1}{\sqrt{t}} B_{ t \cdot }$, we see that $\int_{0}^T \int_{0}^T G(B_t - B'_s) \text{d}t \text{d}s$ has the same distribution as $T \int_{0}^1 \int_{0}^1 G(B_t - B'_s) \text{d}t \text{d}s$. 
This is exactly the same critical scaling behavior as the self-intersection in $d=2$. 

Beyond just giving us a generalization of the moderate deviation results of Chen \cite{Chen04} to $d=4$, our results and proofs reveal connections between the properties of Brownian motions and central quantities in the analysis of differential equations. In $d=2$, Chen \cite{Chen04} revealed the connection between the constant that appears in the moderate deviation analysis of the intersection and the optimal constant in an appropriate Gagliardo-Nirenberg inequality in $d=2$ i.e.,
\begin{align*}
\inf_{f} \frac{\|f\|_{L_2}^{1/2}\|\nabla f\|_{L_2}^{1/2}}{\|f\|_{L_4}}.
\end{align*}

Here, we find  a relationship between the exact constant in the moderate deviation study of $\mathcal{G}$ and what is known as the generalized Gagliardo-Nirenberg inequality in the study of partial differential equations {(ref. \cite{FY15, MS}) i.e., }
\begin{align*}
\inf_{f} \frac{\|f\|_{L_2}^{1/2}\|\nabla f\|_{L_2}^{1/2}}{\left[\int_{(\mathbb{R}^4)^2}f^2(x) |x-y|^{-2} f^2(y) \text{d}x \text{d}y\right]^{1/4} }.
\end{align*}
If we look at  \cite[Theorem 2.3]{FY15}, this inequality is derived from the Hardy-Littlewood-Sobolev inequality and is used to study the Hartree equation. 
Hence, we find a new relationship between the intersection of Brownian motions and the field of analysis.

 
The final application of our result lies in the study of the capacity of the range of a random walk. The second author was originally motivated to study this question to extend the results of the paper \cite{DO}. 
The original goal was to derive a similar result for  moderate deviations of a random walk and to further explore the link between the capacity in general dimension $d$ and the self-intersection in dimension $d-2$. As seen in \cite{DO}, the asymptotics of the capacity of a random walk is controlled by the self-intersection moderated by the Green's function. 
In a forthcoming paper, we can prove a moderate deviation principle for the capacity. 

\subsection{Strategy and Mathematical Description}

As is well-known in the study of large deviations,  the moderate deviation behavior of a positive random variable can be determined via the exact asymptotics of large moments of the random variable. For example, in our case, to understand the moderate deviation behavior of $\mathcal{G}$, one would need to compute quantities such as,
\begin{equation*}
\lim_{n \to \infty} \frac{1}{n} \log \frac{1}{(n!)^2} \mathbb{E}[ \mathcal{G}^n].
\end{equation*}

Chen \cite{Chen04} has performed such computation in the context of the intersection. 
Indeed, the main tool that he has applied is a nice expression for general $n$-th moments with regards to the  mutual-intersection, Le Gall's formula. On a formal level, 
if one considers just two Brownian motions,we have that, 
\begin{equation*}
\int_{0}^n \int_{0}^n \delta (B_t - B'_s) \text{d}t \text{d}s
= \int_{x \in \RR^d} \text{d} x \int_{0}^n \delta (B_t - x) \text{d}t \int_{0}^n \delta (B'_s - x) \text{d}s, 
\end{equation*}
where $\delta$ is the usual Kronecker delta function. 
Thus,
\begin{align*}
&\mathbb{E}\left[\left(\int_{0}^n \int_{0}^n  \delta (B_t - B'_s) \text{d}t \text{d}s \right)^m\right] \\
= & \int_{(\RR^d)^m} \text{d} x_1\ldots \text{d} x_m
 \left(\mathbb{E}\left[  \sum_{\rho} \int_{0\le t_1\le t_2 \le \ldots \le t_n}  \prod_{j=1}^m  \delta (B_{t_j} - x_{\rho(j)}) \text{d}t_j \right] \right)^2.
\end{align*}

The main benefit of this formula is that, through the introduction of the points $x_1,\ldots,x_m$, 
we see that we can separate $B$ and $B'$ and treat the expectations separately. 
Furthermore, one can explicitly compute the expectations above and write it in terms of the transition probability of the Brownian motion. Under the right setup, one can observe that the computation above resembles a Markov transition probability. Thus, after careful manipulation, one can eventually relate the quantity above to finding the eigenvalues of an appropriate symmetric operator; 
finding this maximum eigenvalue 
 can now  be readily phrased as an optimization problem over an appropriate subspace. 
For example, one now has access to formulas resembling the Feynman-Kac formula, which allow one to compute functions of the form $\mathbb{E}[\int_{0}^1 f(B_t) \text{d}t]$. 

However, none of these heuristics can possibly work if one does not separate $B$ and $B'$ from each other; if one essentially has to deal with two time parameters simultaneously, then there is no way to relate this quantity to a Markov transition probability.  When considering the intersection of two Brownian motions moderated by the Green's function, there is not an obvious way to split the two Brownian motions. 
Namely, the function $G(B_s-B'_t)  \propto  \frac{1}{|B_s - B'_t|^2}$ does not naturally lend itself to a splitting of $B$ and $B'$ and it appears that a computation of the moments fundamentally has to deal with some correlation between $B$ and $B'$. 

However, in this paper, we are able to find a means of circumventing this difficulty. 
We first express $G(B_s-B'_t)$ as $\int_{z \in \mathbb{R}^4} \tilde{G}(B_s - z) \tilde{G}(B'_t -z) \text{d}z$. Here $\tilde{G}$ is the convolutional square root of $G$; on a formal level, this allows one to separate out $B$ and $B'$ from each other in the formula.  If we perform the splitting, we get access to multiple computational tools, such as the Feynman-Kac formula. Indeed, one can obtain a lower bound on asymptotic moments relatively straighforwardly via an appropriate application of the Feynman-Kac formula. However, there are still multiple challenges to get an appropriate upper bound.

The main tool to derive an  upper bound is to approximate the moment computation by a Markov transition kernel. If one has a Markov transition kernel representation of the upper bound, then one can represent the upper bound in terms of finding the largest eigenvalue of an appropriate linear operator. However, there are multiple difficulties to deal with in order to derive a Markov transition kernel approximation. In the context of the computation in the Brownian motion, a computation of the $n$-th moment naturally expresses computations as a sum over permutation over $n$ points $x_1,\ldots,x_n$(see equation \eqref{eq:momentBrownian}). A natural Markov kernel approximation would replace the sum of configurations $y_1,\ldots,y_n$, where any configuration $y_1,\ldots,y_n$ is allowed to be a permutation of $x_1,\ldots,x_n$ over configurations $y_1,\ldots,y_n$ where each of the $y_i$ is allowed to be one of the $x_1,\ldots,x_n$ independently of each other.  However, such an approximation will lose exponential factors unless that size of $|\{x_1,\ldots,x_n\}|$ is  far less than $n$; this is only possible if the total state space is finite. 
To make this justification rigorous, we had to appropriately discretize $\mathbb{R}^4$ and argue that there was little loss in making such manipulations. 
In addition, such a justification involved regularizing the singularity of $G$ near the origin. 

Furthermore, the natural Markov transition kernel representation that can be derived from the moments is a rather cumbersome expression involving multiple functions with awkward normalization conditions phrased in terms of the convolutional square root $\tilde{G}$ (see equation \eqref{eq:optrho}). In order to relate this upper bound to the lower bound, one must find a way to transform the optimization problem equation \eqref{eq:optrho} to the constant coming from the modified Gagliardo-Nirenberg inequality. 
The particular form of the modified Gagliardo-Nirenberg inequality is not merely incidental to the proof, it was a necessity in order to bridge the different ways of obtaining the lower and upper bounds for the asymptotic moments.


\subsection{Main Results}

Let $G(z) = \int_0^{\infty} p_t(z) \text{d}t$ and $p_t(z)$ is the transition density that a Brownian motion will reach point $z$ at time $t$. 
 Our main result proves a moderate deviation principle for $\mathcal{G}([0,1]):=\int_{0}^1 \int_0^1 G(B_t - B_s') \text{d}t \text{d}s$. 

As we have stated, the Green's function for the Brownian motion is an inverse polynomial; 
we can relate $\mathcal{G}([0,T])=_d T \mathcal{G}([0,1])$. 
Thus, once we obtain a large deviation principle for $\mathcal{G}([0,1])$, we can extend it to that of $\mathcal{G}([0,T])$. 
 We find that  $\mathcal{G}([0,1])$ is related to best constant of the modified Gagliardo-Nirenberg inequality.  
 Namely, it is the smallest constant $\tilde{\kappa}(4,2)$ such that the following inequality should hold: 
\begin{align}\label{GNconst}
\left[ \int_{(\mathbb{R}^4)^2} g^2(x) G(x-y)g^2(y) \text{d}x \text{d}y \right]^{1/4} \le \tilde{\kappa}(4,2) \|g\|^{1/2}_{L^2} \|\nabla g\|^{1/2}_{L^2}.
\end{align}

\begin{thm}\label{p3}
We have the following large deviation behavior on $\mathcal{G}([0,1])$. 
For $d=4$, 
\begin{align*}
\lim_{t\to \infty}
\frac{1}{t} \log P(  {\mathcal{G}([0,1])}  \ge t)=-\tilde{\kappa}^{-4}(4,2) . 
\end{align*}
\end{thm}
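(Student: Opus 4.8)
The plan is to reduce the tail estimate for $\mathcal{G}([0,1])$ to the exact exponential growth rate of the integer moments of the \emph{exponential-time} functional $\mathcal{G}_H=\int_0^{\tau_1}\!\int_0^{\tau_2}G(B_t-B'_s)\,dt\,ds$, whose moment expansion has the clean permutation-sum structure that the argument needs, and then to identify that rate with $\tilde\kappa(4,2)$. The central quantitative input is
\begin{equation*}
\lim_{n\to\infty}\frac1n\log\frac{1}{(n!)^2}\,\mathbb{E}\big[\mathcal{G}_H^{\,n}\big]=2\log\alpha_H,\qquad\text{with }M(\alpha_H^{-1})=1 .
\end{equation*}
Granting this, Theorem~\ref{p3} follows by transferring from the random to the deterministic time horizon: the self-similarity $\mathcal{G}([0,T])=_d T\,\mathcal{G}([0,1])$, inserted into $\mathbb{E}[\mathcal{G}_H^{\,n}]=\int_0^\infty\!\int_0^\infty e^{-T_1-T_2}\,\mathbb{E}[\mathcal{G}([0,T_1]\times[0,T_2])^n]\,dT_1\,dT_2$ and combined with a standard transfer between moment asymptotics and tail asymptotics (Chernoff's bound on the upper side and a change-of-measure argument on the lower side, as is classical in this area), produces $\lim_{t\to\infty}\tfrac1t\log P(\mathcal{G}([0,1])\ge t)=-\tfrac\gamma2\big(\tfrac{4-\gamma}{4}\big)^{(4-\gamma)/\gamma}\alpha_H^{-4/\gamma}$; at $\gamma=2$ this equals $-\tilde\kappa^{-4}(4,2)$, once one uses that the optimal constant in \eqref{GNconst} is $\kappa_H=\tilde\kappa(4,2)$ and that $\alpha_H=\kappa_H^2(\gamma/2)\big(2\gamma/(4-\gamma)\big)^{(\gamma-4)/4}$.

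For the moment asymptotic, the lower bound is the more routine half. The basic device is the convolutional square root: writing $G=\tilde G*\tilde G$ with $\tilde G(z)\propto|z|^{-3}$ in $d=4$, one has $G(B_t-B'_s)=\int_{\mathbb{R}^4}\tilde G(B_t-z)\,\tilde G(B'_s-z)\,dz$, which decouples $B$ from $B'$ once the $n$-th power is expanded over $n$ auxiliary points. Taking a smooth, rapidly decaying near-optimizer $g$ with $\|g\|_{L^2}=1$ of the modified Gagliardo--Nirenberg inequality \eqref{GNconst} and tilting each of $B$, $B'$ by a Feynman--Kac/change-of-measure so that it occupies the density profile $g^2$ over a time window of the right length, one obtains $\mathbb{E}[\mathcal{G}_H^{\,n}]\ge (n!)^2 e^{(2\log\alpha_H-\varepsilon)n}$ for every $\varepsilon>0$; the entropic price of the tilt is exactly $\tfrac12\int|\nabla g|^2$, the term subtracted in $M(\theta)$, which is why the bound organizes itself around $\alpha_H$.

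The matching upper bound on $\mathbb{E}[\mathcal{G}_H^{\,n}]$ is the heart of the paper and the step I expect to be the main obstacle. After the same splitting, the $n$-th moment becomes a sum over permutations of the $n$ auxiliary points $z_1,\dots,z_n$ of products of $G$- and $\tilde G$-factors (cf.~\eqref{eq:momentBrownian}), and to realize it as the top eigenvalue of a self-adjoint operator one wants to replace this permutation sum by a genuine Markov transition kernel in which each step may land on any $z_i$ independently. That replacement over-counts by an exponential factor unless the number of \emph{distinct} points among $z_1,\dots,z_n$ is $o(n)$, which can be arranged only after first discretizing $\mathbb{R}^4$ to a finite lattice; the discretization must be carried out simultaneously with a regularization of the singularity of $G$ (equivalently of $\tilde G$) at the origin, and one then has to verify that neither approximation costs an exponential factor. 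The resulting finite-state computation collapses to a principal eigenvalue, which one writes as the optimization problem \eqref{eq:optrho} --- a cumbersome functional phrased through $\tilde G$ with awkward normalizations. The last and most delicate step is to show that \eqref{eq:optrho} equals the clean variational constant $\alpha_H$ attached to \eqref{GNconst}; this is exactly where the \emph{modified} Gagliardo--Nirenberg inequality, rather than the classical $L^4$ one, becomes unavoidable, being the only bridge along which the $\tilde G$-based upper bound and the $G$-based lower bound can be made to meet. Once they do, the displayed moment asymptotic is established, and the transfer described above yields Theorem~\ref{p3}.
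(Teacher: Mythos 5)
Your proposal follows essentially the same route as the paper: moment asymptotics for the exponential-time functional $\mathcal{G}_H$ via the convolutional square root $\tilde G$ (lower bound by a spectral/Feynman--Kac argument, upper bound by discretization and compactification to a finite-state Markov kernel and its principal eigenvalue, giving the optimization constant $\rho$ of \eqref{eq:optrho}), then the transfer from exponential to deterministic time and the standard moment-to-tail conversion, and finally the identification of $\rho$ with the modified Gagliardo--Nirenberg constant via Proposition~\ref{prop:GagNir}. The organization differs only cosmetically; the paper isolates the identity $\rho=\tilde\kappa^2(4,2)/\sqrt 2$ as a separate analytic step rather than building $\alpha_H$ directly into the lower bound, but the underlying ideas coincide.
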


Now, we also claim the result for general kernel. 
\begin{cor}\label{corgen}
Assume $H(z) \propto \frac{1}{|z|^{\gamma}}$ with $0 < \gamma \le 2$. 
For $d=4$, 
\begin{equation*} 
\lim_{T\to \infty} T^{-\frac{2}{\gamma}} \log P\bigg(\int_{0}^1 \int_{0}^1 H(B_t - B'_s) \text{d}t \text{d}s \ge T \bigg)
=- \kappa_H^{-\frac{8}{\gamma}}, 
\end{equation*}
where $\kappa_H$ is the optimal constant in \eqref{eq:genGagnir}. 
\end{cor}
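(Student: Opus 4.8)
The plan is to re-run the proof of Theorem~\ref{p3} essentially verbatim with $H$ in place of the Green's function $G$, since that proof uses $G$ only through three structural features shared by every $H(z) \propto |z|^{-\gamma}$ with $0 < \gamma \le 2$. First, such an $H$ has a positive convolutional square root, namely $\tilde{H}(z) \propto |z|^{-(2+\gamma/2)}$ (the Riesz composition formula applies because $2+\gamma/2 < 4$ and $2(2+\gamma/2) > 4$, both clear when $0 < \gamma \le 2$), and this is what licenses the splitting $H(B_s - B_t') = \int_{\mathbb{R}^4} \tilde{H}(B_s - z)\,\tilde{H}(B_t' - z)\,\text{d}z$ used for the Feynman--Kac lower bound on the moments. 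Second, $|H(z)| \le C|z|^{-\gamma}$ with $\gamma \le 2$, so the local singularity of $H$ is no worse than that of $G$ and every regularization-of-the-singularity and discretization-of-$\mathbb{R}^4$ estimate entering the upper bound carries over a fortiori. Third, $H$ is exactly homogeneous of degree $-\gamma$, which is what makes the variational problem $M(\theta)$ of the Introduction — and the optimization problem produced by the Markov-kernel upper bound — scale invariant. One must of course check that the proof of Theorem~\ref{p3} nowhere secretly uses a property special to the Green's function, such as $-\Delta G \propto \delta_0$; the authors assert it does not.

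First I would record the Brownian scaling identity $\int_0^T\!\int_0^T H(B_u - B_v')\,\text{d}u\,\text{d}v =_d T^{\,2-\gamma/2}\int_0^1\!\int_0^1 H(B_s - B_t')\,\text{d}s\,\text{d}t$, so that it suffices to study $\mathcal{G}_H([0,1])$ and its upper tail. Running the proof of Theorem~\ref{p3} with $G$ replaced by $H$ then establishes, for the exponentially stopped functional $\mathcal{G}_H = \int_0^{\tau_1}\!\int_0^{\tau_2} H(B_t - B_s')\,\text{d}t\,\text{d}s$, the moment asymptotics $2\log\alpha_H = \lim_{n\to\infty} n^{-1}\log\!\big((n!)^{-2}\mathbb{E}[\mathcal{G}_H^n]\big)$, with $\alpha_H$ the constant determined by $M(\alpha_H^{-1}) = 1$. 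The step linking these moments to $\mathbb{E}[\mathcal{G}_H([0,1])^n]$, and the Tauberian step converting the latter to the tail of $\mathcal{G}_H([0,1])$, are identical to those in Theorem~\ref{p3} save that the homogeneity degree $\gamma$ takes the place of the value $2$ throughout; together they give
\begin{equation*}
\lim_{T\to\infty} T^{-2/\gamma}\log P\big(\mathcal{G}_H([0,1]) \ge T\big) = -\frac{\gamma}{2}\Big(\frac{4-\gamma}{4}\Big)^{(4-\gamma)/\gamma}\alpha_H^{-4/\gamma}.
\end{equation*}

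It remains to express $\alpha_H$ through $\kappa_H$. For this I would insert the generalized Gagliardo--Nirenberg inequality \eqref{eq:genGagnir} into the definition of $M(\theta)$: normalizing $\|g\|_{L^2}=1$ and writing $r := \|\nabla g\|_{L^2}$ gives $M(\theta) \le \sup_{r>0}\big(\theta\kappa_H^2 r^{\gamma/2} - \tfrac12 r^2\big)$, and this bound is attained because a near-optimal sequence for \eqref{eq:genGagnir}, composed with the $L^2$-preserving rescaling $g \mapsto \lambda^2 g(\lambda\,\cdot)$ — which leaves the Gagliardo--Nirenberg ratio unchanged and multiplies $r$ by $\lambda$ — realizes every value of $r$. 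Solving the elementary one-variable maximum and imposing $M(\alpha_H^{-1}) = 1$ yields $\alpha_H = \kappa_H^2\,(\gamma/2)\,\big(2\gamma/(4-\gamma)\big)^{(\gamma-4)/4}$; substituting this into the displayed rate and simplifying the exponents collapses it to $-\kappa_H^{-8/\gamma}$, which is the assertion of Corollary~\ref{corgen}.

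I expect the main obstacle to be not any single new estimate — since $\gamma \le 2$ there are none — but the \emph{bookkeeping}: the exponent $\gamma$ enters the scaling identity, the normalization of the moment limit, the exponent of the Tauberian theorem, and the evaluation of $\alpha_H$, and one must verify that these four appearances combine to the single clean exponent $2/\gamma$ and the single clean constant $\kappa_H^{-8/\gamma}$. The specialization $\gamma = 2$, where the formula gives $\alpha_H = \kappa_H^2/\sqrt{2}$ and the rate becomes $-\kappa_H^{-4} = -\tilde{\kappa}^{-4}(4,2)$ in agreement with Theorem~\ref{p3}, provides a convenient consistency check.
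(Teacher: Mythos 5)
Your proposal is correct and follows essentially the same route as the paper's proof: rerun Theorem~\ref{p3}'s argument with $\tilde{H}(z)\propto|z|^{-(2+\gamma/2)}$ as the convolutional square root, record the scaling identity $\int_0^T\!\int_0^T H\,=_d T^{(4-\gamma)/2}\int_0^1\!\int_0^1 H$, invoke the moment-matching and Tauberian machinery from Chen's book (Theorem 3.3.2, (2.2.20), Theorem 1.2.8), and substitute $\alpha_H=\kappa_H^2(\gamma/2)\bigl(2\gamma/(4-\gamma)\bigr)^{(\gamma-4)/4}$ into $-\tfrac{\gamma}{2}\bigl(\tfrac{4-\gamma}{4}\bigr)^{(4-\gamma)/\gamma}\alpha_H^{-4/\gamma}$ to obtain $-\kappa_H^{-8/\gamma}$. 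The only place you go beyond the paper is in actually carrying out the one-variable optimization $\sup_{r>0}(\theta\kappa_H^2 r^{\gamma/2}-\tfrac12 r^2)$ that determines $\alpha_H$ from $M(\alpha_H^{-1})=1$; the paper simply asserts the resulting formula, so your filled-in algebra and the $\gamma=2$ consistency check are welcome but represent the same proof, not a different one.
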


Next, we also consider the following self-intersection local time of the Brownian motion in $d=4$ moderated by the Green's function: 
\begin{equation*}
\beta_t :=
\int_{0}^t \int_{0}^s G(B_l - B_s) \text{d}l \text{d}s
-E \bigg[ \int_{0}^t \int_{0}^s G(B_l - B_s) \text{d}l \text{d}s \bigg]. 
\end{equation*}
Note that $\int_{0}^t \int_{0}^s G(B_l - B_s) \text{d}l \text{d}s$ does not exist but as in \cite{As5}, we can define $\beta_t$ by renormalization. 
By \cite{As5, DO}, we find that: 
there exists a value $\gamma_\beta$ such that
\begin{align*}
Ee^{\gamma \beta_1}
\begin{cases}
<\infty & \text{ if } \gamma<\gamma_\beta,\\
=\infty & \text{ if } \gamma>\gamma_\beta.
\end{cases}
\end{align*}

The ordinary self-intersection local time of the Brownian motion in $d=2$ was estimated in \cite{BC04}. 
From our mdoerate deviation results on $\mathcal{G}$, we can show moderate deviation resutls on $\beta_t$; these correspond to \cite[Theorems 1.1 and 1.2]{BC04}. 
We can also obtain results corresponding to \cite[Theorems 1.3-1.5]{BC04} as a corollary by using very similar methods; as such, we omit the proof. 
\begin{thm}\label{p4}
We have
\begin{align}\label{p4*}
\lim_{t\to \infty}
\frac{1}{t} \log P(  \beta_1  \ge t)=-\tilde{\kappa}^{-4}(4,2) . 
\end{align}
In particular,  $\gamma_\beta
=\tilde{\kappa}^{-4}(4,2)$. 
\end{thm}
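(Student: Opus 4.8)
The plan is to prove the two one-sided bounds in \eqref{p4*} and then read off $\gamma_\beta$. Let $\alpha_1$ denote the renormalized \emph{symmetric} self-intersection $\int_0^1\int_0^1 G(B_l-B_s)\,dl\,ds$ (renormalized exactly as $\beta_1$ is, following \cite{As5}), so that $\int_0^1\int_0^s G(B_l-B_s)\,dl\,ds=\tfrac12\alpha_1$ and hence $\beta_1=\tfrac12(\alpha_1-E[\alpha_1])$ and $\{\beta_1\ge t\}=\{\alpha_1\ge 2t+E[\alpha_1]\}$. For the upper bound I will show
\[
\limsup_{n\to\infty}\Big(\frac{1}{(2n)!}\,E\big[\alpha_1^{2n}\big]\Big)^{1/(2n)}\le 2\,\tilde{\kappa}^{4}(4,2);
\]
then, applying Markov's inequality to $\alpha_1^{2n}$ (note $\beta_1\ge t$ means $\alpha_1\ge 2t+O(1)$) and optimizing over $n$, one gets $\limsup_{t\to\infty}t^{-1}\log P(\beta_1\ge t)\le-\tilde{\kappa}^{-4}(4,2)$. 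For the matching lower bound $\liminf_{t\to\infty}t^{-1}\log P(\beta_1\ge t)\ge-\tilde{\kappa}^{-4}(4,2)$ I will give a direct construction, parallel to the lower-bound half of Theorem \ref{p3}. Together these give \eqref{p4*}, and the last assertion of the theorem is then immediate, since $\gamma_\beta$ is by definition the abscissa of convergence of $\gamma\mapsto Ee^{\gamma\beta_1}$, and a tail with $\lim_{t}t^{-1}\log P(\beta_1\ge t)=-\rho$ forces that abscissa to equal $\rho$.

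The moment bound is where the apparatus of Theorem \ref{p3} is re-used. Write $\alpha_1=\lim_\epsilon\alpha_1^{(\epsilon)}$ for a regularization $G_\epsilon$ of $G$; by Le Gall's moment formula for the renormalized self-intersection local time in $d=4$ (of the type used in \cite{As5,DO}, and in \cite{BC04} in $d=2$), the moment $E[(\alpha_1^{(\epsilon)})^{2n}]$ expands --- after an inclusion--exclusion that peels off the subtracted bonds --- into terms dominated by $E\big[(\int_0^1\int_0^1 G_\epsilon(B_l-B_s)\,dl\,ds)^{k}\big]$. Writing $G_\epsilon(x)=\int_{\mathbb{R}^4}\tilde{G}_\epsilon(x-z)\tilde{G}_\epsilon(\cdot-z)\,dz$ to separate the two time coordinates, discretizing $\mathbb{R}^4$, and reducing to the top eigenvalue of a Markov transition kernel --- exactly the chain of reductions of Theorem \ref{p3}, carried out now for a single path --- each such term is controlled by the supremum, over $g$ with $\|g\|_{L^2}=1$, of the ratio of $\int_{(\mathbb{R}^4)^2}g^2(x)G(x-y)g^2(y)\,dx\,dy$ to $\tfrac12\|\nabla g\|_{L^2}^2$, which by the modified Gagliardo--Nirenberg inequality \eqref{GNconst} equals $2\tilde{\kappa}^{4}(4,2)$. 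The denominator $\tfrac12\|\nabla g\|_{L^2}^2$ --- half of the $\tfrac12\|\nabla g_1\|_{L^2}^2+\tfrac12\|\nabla g_2\|_{L^2}^2$ that appears for two independent paths in Theorem \ref{p3} --- is why $\alpha_1$ has exponential moments only up to $\tfrac12\tilde{\kappa}^{-4}(4,2)$; combined with the relation $\beta_1=\tfrac12(\alpha_1-E[\alpha_1])$, which turns that threshold back into $\tilde{\kappa}^{-4}(4,2)$, this reproduces the constant of Theorem \ref{p3}. One still must check that the subtracted terms and the binomial corrections are of lower order in $n$ and that $n\to\infty$ and $\epsilon\to0$ may be exchanged; these are routine uniform-in-$\epsilon$ moment estimates, as in \cite{BC04}.

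For the lower bound, fix $g$ with $\|g\|_{L^2}=1$ nearly saturating \eqref{GNconst} and a scale $r>0$, and tilt the law of $(B_u)_{u\in[0,1]}$ by the Feynman--Kac change of measure --- the ground-state transform associated with $h(x)=g(x/r)$ --- used in the lower-bound half of Theorem \ref{p3}. Under the tilted law the occupation measure of $B$ on $[0,1]$ concentrates, at spatial scale $r$, near $r^{-4}g^2(\cdot/r)\,dx$; since the renormalization subtracts only the short-time logarithmic singularity, a local-in-time effect unaffected by the spatial confinement, a conditional second-moment estimate gives $\beta_1\ge(1-o(1))\,\tfrac12 r^{-2}\int_{(\mathbb{R}^4)^2}g^2(x)G(x-y)g^2(y)\,dx\,dy$ with tilted probability tending to $1$, while the Radon--Nikodym cost of the tilt is $\exp\big(-(1+o(1))\,\tfrac12 r^{-2}\|\nabla g\|_{L^2}^2\big)$. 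Choosing $r$ so that the displayed lower bound for $\beta_1$ equals $t$ forces $r^{-2}=2t\big/\int_{(\mathbb{R}^4)^2}g^2(x)G(x-y)g^2(y)\,dx\,dy$, so the cost becomes $t\,\|\nabla g\|_{L^2}^2\big/\int_{(\mathbb{R}^4)^2}g^2(x)G(x-y)g^2(y)\,dx\,dy$, and optimizing over $g$ via \eqref{GNconst} yields $P(\beta_1\ge t)\ge\exp\big(-(1+o(1))\,\tilde{\kappa}^{-4}(4,2)\,t\big)$, as required.

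The main obstacle is the moment bound. It forces one to carry the entire apparatus of Theorem \ref{p3} --- convolutional square root of $G$, discretization of $\mathbb{R}^4$, regularization of the singularity at the origin, passage to a Markov transition kernel and then to the optimization \eqref{eq:optrho}, and the final transformation of that optimization into the modified Gagliardo--Nirenberg constant --- through the extra complications that a single Brownian path's two time coordinates $l$ and $s$ are not separated a priori and that a divergent centering must be tracked along the inclusion--exclusion. As in \cite{BC04}, what makes this succeed is that Le Gall's moment formula for the renormalized self-intersection has the same combinatorial structure as the one for the mutual intersection underlying Theorem \ref{p3}; this is the structural reason the constant for $\beta_1$ comes out equal to the one for $\mathcal{G}([0,1])$, with the bookkeeping reducing to the two factors $\tfrac12$ above.
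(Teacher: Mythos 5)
Your lower-bound strategy is essentially the paper's: both exploit the Feynman--Kac ground-state transform of a single Brownian path at scale $r$, producing the variational quantity $\sup_g\{\lambda(\int\!\!\int g^2 G g^2)^{1/2}-\tfrac12\|\nabla g\|_{L^2}^2\}$ and then optimizing via \eqref{GNconst}. The paper works at the level of the exponential moment $\liminf_n\frac1n\log\mathbb{E}\exp(\lambda C_n^{1/2})$ and invokes the correspondence of \cite[(3.9)]{BC04}, whereas you tilt the law of $B$ and estimate $P(\beta_1\ge t)$ directly by a second-moment argument; either route works and they yield the same constant.

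The upper bound, however, has a genuine gap, and it is precisely where you deviate from the paper. You propose to run ``the entire apparatus of Theorem \ref{p3}'' on a direct moment expansion of $\alpha_1$. But the apparatus of Theorem \ref{p3} --- the perfect-square expression in \eqref{eq:momentBrownian}, the symmetric operator $T_k$, the discretized Markov transition kernel and its top-eigenvalue reduction --- is built on the fact that $B$ and $B'$ are \emph{independent}: the moment factorizes as $\int dz_1\cdots dz_n\,[\,\sum_\rho\cdots\,]^2$, one bracket per path, and each bracket is a Markov chain. For the self-intersection of a \emph{single} path the $2n$ time variables $s_1,t_1,\ldots,s_n,t_n$ all lie on one trajectory; one must order all $2n$ times and sum over the ways the $n$ sites $z_i$ are visited twice along a single chain. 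There is no perfect-square structure and the spectral reduction of Section \ref{sec:contupperbound} does not transfer. Your assertion that ``Le Gall's moment formula for the renormalized self-intersection has the same combinatorial structure as the one for the mutual intersection'' is not correct, and citing \cite{BC04} for ``routine uniform-in-$\epsilon$ moment estimates'' of the self-intersection misreads that paper: Bass--Chen introduce the decomposition of $\beta_1$ into pieces precisely to \emph{avoid} a direct self-intersection moment bound. The paper here follows that route: writing $\beta_1=\beta_{1/2}+B([1/2,1])+A([0,1/2];[1/2,1])$ with $B([1/2,1])=_d\beta_{1/2}=_d\beta_1/2$ independent of $\beta_{1/2}$ and $A([0,1/2];[1/2,1])=_d\tfrac12\int_0^1\!\int_0^1 G(B_t-B'_s)\,dt\,ds$, the upper tail of $\beta_1$ reduces, via the bootstrap of \cite[(3.3)]{BC04}, to the upper tail of $\mathcal{G}([0,1])$, which is supplied by Theorem \ref{p3}. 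You should replace the direct moment step by this decomposition-and-bootstrap reduction; as sketched, your moment bound for a single path is not justified by the machinery you invoke.
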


Finally, we introduce the resulst of our forthcoming paper regarding the moderate deviation of the capacity of a simple random walk, which is one of the motivation of this paper. 
We have the following moderate deviation behavior for $\mathfrak{G}_n:=\sum_{i=1}^n \sum_{l=1}^n G_d(S_i-\tilde{S}_l)$, 
where $S$ and $\tilde{S}$ are independent simple random walks on $\Z^4$ and $G_d$ is the discrete Green's function. 
Let $b_n= o(n)$ and $\lim_{n \to \infty} b_n = \infty$. 
Then, we have, for $\lambda>0$, 
\begin{align*}
&\lim_{n\to \infty}
\frac{1}{b_n} \log P( \mathfrak{G}_n \ge  \lambda n b_n)=-\tilde{\kappa}^{-4}(4,2) \lambda.
\end{align*}

As we have mentioned earlier in the introduction, we can use our main result to obtain a moderate deviation principle for the capacity of a range of a random walk. 
Let $\tau_A$ denote the first positive hitting time of a finite set $A$ by a simple random walk $S$. 
Define 
\begin{align*}
\ca (A)
:= \sum_{x \in A} 
P^x(\tau_A =\infty)
\end{align*}
and $\R[a,b] :=  \{S_a,\ldots,S_b\}$. If $a=0$, we simply write it as $\R_b$.  
As observed in the papers \cite{As5,DO}, the capacity of the range of the random walk can be carefully decomposed as the sum of the capacities of the first and second halves of the random walk as well as a term representing the `mutual capacity' of interaction between the first and second halves.
Namely, one has that,
\begin{equation*} 
   \ca( \R_{2n}) = \ca(\R_n) + \ca(\R[n+1,2n])
     -  \chi_c(\{S_i\}_{i=0}^n, \{S_i\}_{i=n+1}^{2n}) 
\end{equation*}
for some function $\chi_c$. 
Once one does this, one will observe that the main contribution to the large deviation behavior will come from the terms $\chi_c$. 
As investigated in \cite{DO}, the term $\chi_c$ can be marginally simplified to be of the form of $\frac{\pi^2}{64(\log n)^2} \mathfrak{G}_n$. 
Thus, we estimate the following : 
for some  $b_n \to \infty$, 
\begin{align*}
\lim_{n\to \infty}
\frac{1}{b_n} \log 
P\bigg( \ca (\R_n)-E \ca (\R_n) \le -\frac{\lambda n}{(\log n)^2}b_n \bigg). 
\end{align*}

Finally, we explain the contents of this paper. 
The central Theorem \ref{p3}  is divided into Sections \ref{sec:contlargedeviation} and \ref{sec:contupperbound}. 
This relates the large deviations of Theorem \ref{p3} to an optimization problem defined by the constant $\rho$ as in equation \eqref{eq:optrho}. 
To relate this constant to a more fitting form, 
we have an intermediate Section \ref{sec:rhoGagnir} which relates the quantity $\rho$ to the modified Gagliardo-Nirenberg inequality. 
In Section \ref{sec:self-inter}, we estimate the self-intersection of the Brownian motion moderated by the Green's function, which corresponds to the proof of  Theorem \ref{p4}. 
Appendix \ref{ref:appendixA} contains estimates that regularize the singularity of the Green's function around the $0$.  At the beginning of Section \ref{sec:contupperbound}, we split $G$ into a component supported near the origin and another away from the origin; the results of Appendix \ref{ref:appendixA} show that the component  supported near the origin does not contribute asymptotically to the large deviation statistics. Appendix \ref{sec:AppendixB} allows us to analyze the modified Kernels obtained via the discretization and compactification procedure in Section \ref{sec:contupperbound}; in particular, it appears in the proof of Lemma \ref{lem:removeepsilon} to remove the effects of discretization.



\section{Large Deviation for the intersection of Brownian Motions: The Proof of Theorem \ref{p3} } \label{sec:contlargedeviation}


In this section, we will consider the large deviation of the intersection moderated by the Green’s function kernel of the Brownian motion. 
Recall that our basic quantity $\mathcal{G}$ is given by
\begin{equation*}
\mathcal{G}:=\int_{0}^{\tau_1} \int_{0}^{\tau_2} G(B_t-B'_s) \text{d}t \text{d}s.
\end{equation*}
Here, as before, $\tau_1$, $\tau_2$ are exponential random variables. 
This is in contrast to $\mathcal{G}([0,1])$, in which both Brownian motions vary from time $0$ to $1$. 
We remark that $\mathcal{G}([0,1])$ has the following scaling property,
\begin{equation} \label{eq:scaling}
   \mathcal{G}([0,t])=_d t \mathcal{G}([0,1]).
\end{equation}

In the following result, which is the main theorem in this section, we compute the moment. 
Our strategy will be to write $G$ in terms of its convolutional square root $G= \tilde{G}* \tilde{G}$. 
One can directly compute the convolutional square root as $\tilde{G}(x) = \int_0^\infty \frac{1}{\sqrt{\pi t}} p_t(x) \text{d}t$. 
Thus, we see that $\tilde{G}$ is positive an has the asymptotics, $\tilde{G}(x) \propto \frac{1}{|x|^3}$. 
Also, we use $P_{\tau}$ to denote the probability density that a Brownian motion killed by an exponential variable with rate 1 reached the point $x$ at some time. 
Namely, $P_{\tau}(x) = \int_{0}^{\infty} e^{-t}p_t(x) \text{d}t$.  
\begin{thm} \label{thm:onGBtau}
Consider $\mathcal{G}$ as defined earlier. We have the following expression for the large moments:
\begin{equation*}
\lim_{n\to \infty}
\frac{1}{n}\log \frac{1}{(n!)^2} \mathbb{E}[\mathcal{G}^n] = 2 \log \rho.
\end{equation*}

Here, $\rho$ is the solution to the following optimization problem, that is, 
\begin{equation} \label{eq:optrho}
    \rho: =
\sup_{\substack{f \in L^2_G\\ k:  \int_{\mathbb{R}^4} k^2(z) \text{d}z =1}} 
   \int_{(\mathbb{R}^4)^4} f(\tilde{z},\tilde{e}) \sqrt{k}(\tilde{z}) \tilde{G}(\tilde{e})P_{\tau}(\tilde{z}+\tilde{e}-z-e) \tilde{G}(e) \sqrt{k}(z) f(z,e) \text{d}\tilde{z} \text{d} \tilde{e} \text{d}z \text{d}e,
\end{equation}
and $f \in L^2_G$ is the space of functions that satisfies $\int_{(\mathbb{R}^4)^2}f^2(z,e) \tilde{G}(e)\text{d}z \text{d}e =1$.
\end{thm}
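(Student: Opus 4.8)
\textit{Plan of proof.} The plan is to compute $\mathbb{E}[\mathcal{G}^n]$ up to subexponential corrections by a Le Gall type moment expansion, to recognize the answer as the exponential growth rate of a transfer operator, and then to match that rate with $\rho$ from both sides. The starting point is the factorization $G=\tilde{G}*\tilde{G}$, which gives $\mathcal{G}=\int_{\mathbb{R}^4}\Phi(z)\Psi(z)\,\text{d}z$ with $\Phi(z)=\int_0^{\tau_1}\tilde{G}(B_t-z)\,\text{d}t$ and $\Psi(z)=\int_0^{\tau_2}\tilde{G}(B'_s-z)\,\text{d}s$ independent and identically distributed. Expanding the $n$-th power, using independence of $(B,\tau_1)$ and $(B',\tau_2)$, symmetrizing over the orderings of the time variables, and then invoking the Markov property together with $P_\tau=\int_0^\infty e^{-t}p_t\,\text{d}t$, one obtains
\begin{equation*}
\frac{1}{(n!)^2}\mathbb{E}[\mathcal{G}^n]=\frac{1}{n!}\sum_{\pi\in S_n}\langle F_n,F_n\circ\pi\rangle_{L^2((\mathbb{R}^4)^n)}=\big\|\overline{F}_n\big\|^2_{L^2((\mathbb{R}^4)^n)},
\end{equation*}
where $F_n(z_1,\dots,z_n)=\int_{(\mathbb{R}^4)^n}\prod_{i=1}^nP_\tau(x_i-x_{i-1})\tilde{G}(x_i-z_i)\,\text{d}x$ with $x_0=0$, and $\overline{F}_n$ denotes the symmetrization of $F_n$ over $S_n$. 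The theorem is thus equivalent to $\|\overline{F}_n\|^{1/n}\to\rho$. It is worth noting that after integrating out the variable $e$ and optimizing over $f$ at fixed $k$ by Cauchy--Schwarz, the optimization \eqref{eq:optrho} becomes $\rho=\sup_{\|k\|_{L^2}=1}\lambda_{\max}(\tilde{\mathcal{S}}_{\tilde{G}*k})$, where $\tilde{\mathcal{S}}_{\phi}$ is the self-adjoint operator on $L^2(\mathbb{R}^4)$ with kernel $\sqrt{\phi(x)}\,P_\tau(x-y)\,\sqrt{\phi(y)}$; the $\tilde{G}(e)$-weighted normalization of $f$ and the extra variable $k$ are the residue of the two convolution-square-root factors and of the occupation profile of the points $z_i$. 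Since the $P_\tau$-walk does not renew, there is no subadditivity shortcut, so we proceed by matching lower and upper bounds.

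For the lower bound, test $\overline{F}_n$ against the symmetric function $g^{\otimes n}/\|g\|_{L^2}^n$ for a smooth, strictly positive, compactly supported $g$. As $g^{\otimes n}$ is symmetric, $\langle\overline{F}_n,g^{\otimes n}\rangle=\langle F_n,g^{\otimes n}\rangle=\mathbb{E}\big[\prod_{i=1}^n(\tilde{G}*g)(X_i)\big]$, where $X$ is the random walk with i.i.d. $P_\tau$-distributed steps started at $0$; this is a Feynman--Kac type identity. Positivity and compactness of $\tilde{\mathcal{S}}_{\tilde{G}*g}$ give $\mathbb{E}\big[\prod_i(\tilde{G}*g)(X_i)\big]^{1/n}\to\lambda_{\max}(\tilde{\mathcal{S}}_{\tilde{G}*g})$, whence $\liminf_n\|\overline{F}_n\|^{1/n}\ge\lambda_{\max}(\tilde{\mathcal{S}}_{\tilde{G}*g})/\|g\|_{L^2}$; taking the supremum over $g$ and using the reformulation of $\rho$ above, this lower bound equals exactly $\rho$. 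The only care required is to regularize $\tilde{G}$ near the origin so that $\tilde{\mathcal{S}}$ is well behaved, and to check that starting the walk at $0$ does not affect the exponential rate.

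The upper bound $\limsup_n\|\overline{F}_n\|^{1/n}\le\rho$ is the heart of the proof and the main obstacle. Integrating out the $z$-variables turns $\sum_\pi\langle F_n,F_n\circ\pi\rangle$ into $\mathbb{E}\big[\operatorname{per}\big(G(X_i-Y_j)\big)_{i,j=1}^n\big]$ for two independent $P_\tau$-walks $X,Y$, a permanent of the Green-function interaction matrix. The natural way to bound a permanent is to replace the constraint that the $z$'s used by $Y$ be a permutation of those used by $X$ with the relaxation that each step of $Y$ independently samples the occupation profile $k^2$ of the $z_i$'s; this rewrites the bound as the $n$-th iterate of a transfer operator whose kernel is precisely that of \eqref{eq:optrho}. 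The relaxation loses an exponential factor unless the number of distinct particle locations is $o(n)$, so one must: (i) split $\tilde{G}$ (equivalently $G$) into a piece supported near the origin, shown negligible for the exponential rate in Appendix \ref{ref:appendixA}, and a bounded remainder; (ii) compactify and discretize $\mathbb{R}^4$ to a finite state space, which forces $|\,\text{supp}(k^2)\,|=o(n)$; (iii) on the finite space, control the combinatorial over-count of the relaxation uniformly over $k^2$ (sorting empirical profiles by how spread out they are, using Stirling-type estimates, and separately bounding the contribution of degenerate profiles), thereby identifying the bound with the top eigenvalue of the discretized operator of \eqref{eq:optrho}, maximized over $k$; (iv) remove the discretization and compactification via Lemma \ref{lem:removeepsilon} and Appendix \ref{sec:AppendixB}, showing the discrete value of $\rho$ converges to $\rho$. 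The two delicate points are the uniform combinatorial estimate in (iii) and the convergence of the discretized variational problems in (iv); the cumbersome shape of \eqref{eq:optrho} is precisely what survives this procedure, which is also why its identification with the generalized Gagliardo--Nirenberg constant is postponed to Section \ref{sec:rhoGagnir}. Combining the two bounds gives $\lim_n\frac1n\log\frac{1}{(n!)^2}\mathbb{E}[\mathcal{G}^n]=2\log\rho$; finiteness of $\rho$, hence of $\mathbb{E}[\mathcal{G}^n]$ and well-posedness of the variational problems, follows a priori from \eqref{eq:genGagnir}, which also supplies the decay and compactness needed for the spectral arguments.
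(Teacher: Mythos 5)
Your proposal matches the paper's proof essentially step for step: the identity $\frac{1}{(n!)^2}\mathbb{E}[\mathcal G^n]=\|\overline F_n\|^2_{L^2((\mathbb{R}^4)^n)}$ and the Le Gall moment expansion are exactly \eqref{eq:momentBrownian}, the Feynman--Kac lower bound via a symmetric test profile and passage to the top eigenvalue of a $\sqrt{\phi}\,P_\tau\,\sqrt{\phi}$ transfer operator is Theorem \ref{thm:onGBtaulwr}, and the upper bound's four steps (split $G$ near the origin using Appendix \ref{ref:appendixA}, compactify and discretize, control the combinatorial over-count by Stirling on the empirical profile, then remove the regularizations) are precisely Theorem \ref{thm:uprGBtau}, Lemmas \ref{lem:uprbnd}--\ref{lem:removM}, and Appendix \ref{sec:AppendixB}. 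The permanent language and the reformulation $\rho=\sup_{\|k\|_{L^2}=1}\lambda_{\max}$ after Cauchy--Schwarz in $f$ are just cleaner packagings of the same two-variable optimization the paper works with directly, so this is the same argument, correctly outlined.
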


Note that we see that we can write $\mathcal{G}$ as, 
\begin{equation*} 
\mathcal{G}
    := \int_{\mathbb{R}^4} \text{d} z\int_{0}^{\tau_1}  \tilde{G}(B_t-z) \text{d}t \int_{0}^{\tau_2} \tilde{G}(B'_s-z)\text{d}s.
\end{equation*}
At this point, we can try to take powers of the following expression and compute the resulting moments: 
\begin{align}\label{eq:momentBrownian}
    \mathbb{E}[\mathcal{G}^n] 
    =  \int_{(\mathbb{R}^4)^n} \text{d}z_1\ldots \text{d}z_n\left[ \sum_{\rho} \int_{(\mathbb{R}^4)^n} \prod_{i=1}^n \tilde{G}(x_i-z_{\rho(i)}) P_{\tau}(x_i - x_{i-1}) \text{d}x_1\ldots\text{d}x_n \right]^2. 
\end{align}

Analysing this expression carefully will allow one to deduce Theorem \ref{thm:onGBtau}. 
By scaling, we can almost relate this to the more standard expression $\mathcal{G}$. 
One issue here is that in order to apply the scaling argument, one needs the times $\tau_1$ and $\tau_2$ to match. 
This clearly cannot be true for random, independent $\tau_1$ and $\tau_2$.
 However, we have inequalities to relate the expressions $\mathcal{G}$ with $\mathcal{G}([0,1])$. 
If we consider the general expression $\mathcal{G}_{t_1,t_2}:= \int_{ \mathbb{R}^4} \text{d}z \int_{0}^{t_1} \tilde{G}(B_t-z) \text{d}t \int_0^{t_2} \tilde{G}(B'_s-z) \text{d}s$, we have the following analog of Le Gall's moment formula,
\begin{equation*}
\begin{aligned}
&\mathbb{E}[(\mathcal{G}_{t_1,t_2})^n]\\
& = \int_{(\mathbb{R}^4)^n} \text{d}z_1 \ldots \text{d}z_n\\
& \times \sum_{\rho_x} \int_{(\mathbb{R}^4)^n} \text{d}x_1\ldots\text{d}x_n\int_{[0,t_1]^n} \text{d}s_1\ldots \text{d}s_n \prod_{i=1}^n  \tilde{G}(x_i-z_{\rho_x(i)}) p_{s_i - s_{i-1}}(x_i - x_{i-1})\\
&\times \sum_{\rho_y}\int_{(\mathbb{R}^4)^n}\text{d}y_1\ldots\text{d}y_n\int_{[0,t_2]^n} \text{d}r_1\ldots \text{d}r_n \prod_{i=1}^n  \tilde{G}(y_i-z_{\rho_y(i)}) p_{r_i - r_{i-1}}(y_i - y_{i-1}).
\end{aligned}
\end{equation*}
By the Cauchy-Schwartz inequality, we can relate the moments over different times $t_1 \ne t_2$ to moments using the same time.
Noting that $\tilde{G}$ is a positive quantity, we have that 
\begin{equation} \label{eq:Cauch-Schwmom}
\mathbb{E}[(\mathcal{G}_{t_1,t_2})^n] \le \mathbb{E}[(\mathcal{G}_{t_1,t_1})^n]^{1/2} \mathbb{E}[(\mathcal{G}_{t_2,t_2})^n]^{1/2} = t_1^{n/2} t_2^{n/2} \mathbb{E}[\mathcal{G}([0,1])^{n}].
\end{equation}
In the lower bound direction, it is clear that,
additionally, it is clear that,
\begin{equation} \label{eq:scallowe4r4bound}
\mathbb{E}[\mathcal{G}^n] \ge \mathbb{E}[(\mathcal{G}_{\min(\tau_1,\tau_2), \min(\tau_1,\tau_2)})^n] 
= \mathbb{E}[(\min(\tau_1,\tau_2))^n] \mathbb{E}[\mathcal{G} ([0,1])^n].
\end{equation}

Combining manipulations on the exponential function along with equations \eqref{eq:scallowe4r4bound} and \eqref{eq:Cauch-Schwmom} allow one to relate the moments of $\mathcal{G}$ with those of $\mathcal{G}([0,1])$.
With equation \eqref{eq:Cauch-Schwmom} and \eqref{eq:scaling} in hand, one can perform standard manipulations on exponential functions to obtain the following limiting result on moments of $\mathcal{G}([0,1])$. 

\begin{cor} \label{cor:ongb} 
Consider the quantity $\mathcal{G}([0,1])$. We have the following moment estimates on $\mathcal{G}([0,1])$: 
\begin{equation}
    \lim_{n \to \infty} \frac{1}{n} \log \frac{1}{n!} \mathbb{E}[\mathcal{G}([0,1])^n] = 2 \log \rho + \log 2,
\end{equation}
and $\rho$, again, is the optimization  problem from equation \eqref{eq:optrho}.
\end{cor}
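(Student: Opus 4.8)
The plan is to deduce Corollary~\ref{cor:ongb} from Theorem~\ref{thm:onGBtau} by a purely real-variable argument that converts the moment asymptotics of $\mathcal{G}$ (the doubly exponential-time version) into those of $\mathcal{G}([0,1])$ (the fixed-time version), using only the two comparison inequalities \eqref{eq:Cauch-Schwmom} and \eqref{eq:scallowe4r4bound} together with the scaling identity \eqref{eq:scaling} and elementary Gamma-integral estimates.

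\medskip

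\emph{Lower bound.} Start from \eqref{eq:scallowe4r4bound}, which gives $\mathbb{E}[\mathcal{G}^n] \ge \mathbb{E}[(\min(\tau_1,\tau_2))^n]\,\mathbb{E}[\mathcal{G}([0,1])^n]$. Since $\min(\tau_1,\tau_2)$ is exponential with rate $2$, one has $\mathbb{E}[(\min(\tau_1,\tau_2))^n] = n!\,2^{-n}$. Hence
\begin{equation*}
\mathbb{E}[\mathcal{G}([0,1])^n] \le \frac{2^n}{n!}\,\mathbb{E}[\mathcal{G}^n],
\end{equation*}
so that $\frac{1}{n}\log\frac{1}{n!}\mathbb{E}[\mathcal{G}([0,1])^n] \le \log 2 + \frac{1}{n}\log\frac{1}{(n!)^2}\mathbb{E}[\mathcal{G}^n]$, and taking $n\to\infty$ with Theorem~\ref{thm:onGBtau} yields $\limsup_n \frac1n\log\frac{1}{n!}\mathbb{E}[\mathcal{G}([0,1])^n] \le 2\log\rho + \log 2$.

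\medskip

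\emph{Upper bound.} For the reverse direction, condition on $(\tau_1,\tau_2)$ and use Fubini: $\mathbb{E}[\mathcal{G}^n] = \int_0^\infty\!\int_0^\infty e^{-t_1-t_2}\,\mathbb{E}[(\mathcal{G}_{t_1,t_2})^n]\,dt_1\,dt_2$. Apply \eqref{eq:Cauch-Schwmom} to get $\mathbb{E}[(\mathcal{G}_{t_1,t_2})^n]\le t_1^{n/2}t_2^{n/2}\,\mathbb{E}[\mathcal{G}([0,1])^n]$, so that
\begin{equation*}
\mathbb{E}[\mathcal{G}^n] \le \mathbb{E}[\mathcal{G}([0,1])^n]\left(\int_0^\infty e^{-t}t^{n/2}\,dt\right)^2 = \mathbb{E}[\mathcal{G}([0,1])^n]\,\Gamma\!\left(\tfrac n2+1\right)^2.
\end{equation*}
Combined with Theorem~\ref{thm:onGBtau} this gives $\liminf_n \frac1n\log\big(\Gamma(\tfrac n2+1)^2\,\mathbb{E}[\mathcal{G}([0,1])^n]\big)\ge 2\log\rho$; then Stirling's formula $\frac1n\log\Gamma(\tfrac n2+1)^2 = \log n - 1 - \log 2 + o(1)$, together with $\frac1n\log n! = \log n - 1 + o(1)$, converts this into $\liminf_n \frac1n\log\frac{1}{n!}\mathbb{E}[\mathcal{G}([0,1])^n] \ge 2\log\rho + \log 2$. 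Matching with the $\limsup$ bound above completes the proof.

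\medskip

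\emph{Where the work lies.} The argument itself is short; the only points requiring care are (i) justifying the Fubini interchange and the conditional version of Le Gall's formula used implicitly in \eqref{eq:Cauch-Schwmom} (the integrand is nonnegative, so this is routine but should be stated), and (ii) the bookkeeping with Stirling's approximation to make sure the $\Gamma(\tfrac n2+1)^2$ factor exactly reproduces the $n!$ normalization up to the extra $\log 2$ — concretely, $\Gamma(\tfrac n2+1)^2/n! \to$ behaves like $(\pi n)^{1/2}2^{-n}$ up to subexponential factors, which is exactly the source of the additive $\log 2$. I do not anticipate a genuine obstacle here; the substantive content is entirely in Theorem~\ref{thm:onGBtau}, and this corollary is the easy translation step.
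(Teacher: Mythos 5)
Your argument is correct and takes essentially the same approach as the paper, which simply cites the exponential-moment computation in \cite[Theorem 3.3.2]{Chenbook} using exactly the inputs \eqref{eq:scaling}, \eqref{eq:Cauch-Schwmom}, and \eqref{eq:scallowe4r4bound} that you use. You have just made the $\Gamma$-function and Stirling bookkeeping explicit (note the minor transcription slip of omitting the $(n!)^2$ normalization in your displayed $\liminf$, which you immediately restore in the next step).
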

\begin{proof}
Given the scaling property in equation \eqref{eq:scaling}, 
the Cauchy-Schwartz inequality for the moment given in equation \eqref{eq:Cauch-Schwmom}, and the lower bound in \eqref{eq:scallowe4r4bound}, this follows from the computation on the moments of exponential random variables in the proof of \cite[Theorem 3.3.2]{Chenbook}.
    
\end{proof}

The optimization problem $\rho$ may not seem recognizable in this form, but in Proposition \ref{prop:GagNir}, we show that $\rho$ is the same as $\frac{\tilde{\kappa}(4,2)^2}{\sqrt{2}}$, where $\tilde{\kappa}(4,2)$ is the optimal constant in the modified Gagliardo-Nirenberg inequality \eqref{GNconst}. 
Proposition \ref{prop:GagNir} shows that $\rho = \frac{\tilde{\kappa}^2(4,2)}{2}$. Using this information on the constant $\rho$ along with standard large deviation estimates derived from moment estimates on positive quantities, we can derive the proof of the main theorem.

\begin{proof}[Proof of Theorem \ref{p3}]
Once you substitute the expression $\rho = \frac{\tilde{\kappa}^2(4,2)}{\sqrt{2}}$ from Proposition \ref{prop:GagNir} to the moment estimates in Corollary \ref{cor:ongb}, this follows from \cite[Theorem 1.2.8]{Chenbook}.

\end{proof}

\begin{proof}[Proof of Corollary \ref{corgen}] 
By the same proof as that of Green's function, we can obtain the corresponding result to Proposition \ref{prop:GagNir} and Theorem \ref{thm:onGBtau}. 
Note that 
\begin{align*}
\int_{0}^T \int_{0}^T H(B_t - B'_s) \text{d}t \text{d}s
=_d T^{\frac{4-\gamma}{2}}\int_{0}^1 \int_{0}^1 H(B_t - B'_s) \text{d}t \text{d}s. 
\end{align*}
Then, if we repeat the proof of  \cite[Theorem 3.3.2 and (2.2.20)]{Chenbook}, we have
\begin{align*} 
&\lim_{n\to \infty}\frac{1}{n} \log \frac{1}{(n!)^{(\gamma/2)}} \mathbb{E}[(\int_0^1 \int_0^1 H(B_t - B_s') \text{d}t \text{d}s)^n] \\
=&2 \log \alpha_H +(2-\gamma/2) \log 2 - (2- \gamma/2) \log (2- \gamma/2)
\end{align*}
and hence we have, again, from \cite[Theorem 1.2.8]{Chenbook},
\begin{align*} 
\lim_{T\to \infty} T^{-\frac{2}{\gamma}}\log P\bigg(\int_{0}^1 \int_{0}^1 H(B_t - B'_s) \text{d}t \text{d}s \ge T \bigg)
=-\frac{\gamma}{2} (\frac{4-\gamma}{4})^{\frac{4-\gamma}{\gamma}} \alpha_H^{-\frac{4}{\gamma}}.
\end{align*}
Since $\alpha_H = \kappa_H^2 \left(\frac{\gamma}{2}\right) \left( \frac{2\gamma}{4-\gamma}\right)^{\frac{\gamma-4}{4}}$, we obtain the desired result. 
\end{proof}

\subsection{Lower Bound for the intersection of Brownian Motions} \label{sec:lwrbndbrown}

We will prove Theorem \ref{thm:onGBtau} by proving corresponding upper and lower bounds. 
In this subsection, we prove the following lower bound estimate on the moments of $\mathcal{G}$.
\begin{thm} \label{thm:onGBtaulwr} 
Consider $\mathcal{G}$.  
We have the following  lower bound for the large moments: 
\begin{equation*}
  \liminf_{n \to \infty}
\frac{1}{n}\log \frac{1}{(n!)^2} \mathbb{E}[\mathcal{G}^n] \ge 2 \log \rho.
\end{equation*}

 Here, $\rho$  is the optimization problem defined in \eqref{eq:optrho}.

\end{thm}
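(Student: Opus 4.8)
The plan is to establish the matching lower bound by a direct Feynman--Kac / Donsker--Varadhan construction: force both exponential clocks to run for a long time $T$ and force both Brownian motions to spend the time interval $[0,T]$ with essentially the same occupation profile, chosen to be a near-optimizer of the Gagliardo--Nirenberg inequality \eqref{GNconst}; on that event $\mathcal{G}$ is essentially the deterministic quadratic form $T^2\int\phi^2 G\phi^2$, and one then optimizes over $T$ and over the scale of the profile.

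Concretely, first I would fix $\varepsilon>0$ and pick, by the definition of the best constant in \eqref{GNconst}, a smooth nonnegative compactly supported $\phi$ with $\|\phi\|_{L^2}=1$ and
\begin{equation*}
\frac{\int_{(\mathbb{R}^4)^2}\phi^2(x)\,G(x-y)\,\phi^2(y)\,\mathrm{d}x\,\mathrm{d}y}{\|\nabla\phi\|_{L^2}^2}\ \ge\ \tilde{\kappa}(4,2)^4-\varepsilon .
\end{equation*}
Then for a parameter $T>0$ I would work on the event $E_T=\{\tau_1\ge T\}\cap\{\tau_2\ge T\}\cap A_T\cap A'_T$, where $A_T$ is the event that the normalized occupation measure $\tfrac1T\int_0^T\delta_{B_s}\,\mathrm{d}s$ lies in a small neighbourhood of $\phi^2\,\mathrm{d}x$ and $A'_T$ the analogue for $B'$. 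The four events are independent; $P(\tau_i\ge T)=e^{-T}$; and by the lower-bound half of the Donsker--Varadhan large deviation principle for the occupation measure of Brownian motion (equivalently by a Girsanov change of measure tilting $B$ by the ground state associated with $\phi$), $P(A_T)\ge\exp\!\big(-\tfrac T2\|\nabla\phi\|_{L^2}^2(1+o(1))\big)$ as $T\to\infty$. On $E_T$, using $G\ge0$ and monotonicity of $\mathcal{G}$ in $\tau_1,\tau_2$, one has $\mathcal{G}\ge\int_0^T\!\!\int_0^T G(B_s-B'_u)\,\mathrm{d}u\,\mathrm{d}s=\int\!\!\int G(x-y)\,\ell_T^B(\mathrm{d}x)\,\ell_T^{B'}(\mathrm{d}y)\ge T^2\big(\int\phi^2 G\phi^2-\varepsilon\big)$, where $\ell_T^B,\ell_T^{B'}$ denote the occupation measures on $[0,T]$. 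Putting these together yields, for every $T$,
\begin{equation*}
\mathbb{E}[\mathcal{G}^n]\ \ge\ T^{2n}\Big(\int\phi^2 G\phi^2-\varepsilon\Big)^{n}\exp\!\big(-2T-T\|\nabla\phi\|_{L^2}^2+o(T)\big).
\end{equation*}

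Next I would optimize: the choice $T=T_n=2n/(2+\|\nabla\phi\|_{L^2}^2)$ maximizes $2n\log T-T(2+\|\nabla\phi\|_{L^2}^2)$ and tends to infinity, and with $n\log n-n=\log n!+o(n)$ this gives $\mathbb{E}[\mathcal{G}^n]\ge(n!)^2 e^{o(n)}\big(\tfrac{4\int\phi^2 G\phi^2}{(2+\|\nabla\phi\|_{L^2}^2)^2}\big)^n$. Replacing $\phi$ by the scaled near-optimizer $a^2\phi(a\cdot)$ (still of unit $L^2$ norm) and optimizing over $a>0$ turns the bracket into $\tfrac{\int\phi^2 G\phi^2}{2\|\nabla\phi\|_{L^2}^2}$, which by the previous display is $\ge\tfrac12\big(\tilde\kappa(4,2)^4-\varepsilon\big)$. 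Since $\tilde\kappa(4,2)^4=2\rho^2$ (Proposition \ref{prop:GagNir}), letting $\varepsilon\downarrow0$ gives $\liminf_{n\to\infty}\tfrac1n\log\tfrac1{(n!)^2}\mathbb{E}[\mathcal{G}^n]\ge\log(\rho^2)=2\log\rho$, which is the assertion. (If one prefers not to invoke Proposition \ref{prop:GagNir} at this stage, the identical construction applies with the occupation profile chosen to approximate a near-optimizer of the operator problem \eqref{eq:optrho} directly, at the cost of a less transparent identification of the constant.)

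The one delicate point will be making the step ``on $E_T$, $\mathcal{G}\gtrsim T^2\int\phi^2 G\phi^2$'' rigorous: because $G$ is unbounded at the origin in $d=4$, weak convergence of the occupation measures cannot be inserted directly into the bilinear form $(\mu,\nu)\mapsto\int\!\!\int G\,\mathrm{d}\mu\,\mathrm{d}\nu$. I would handle this exactly as in Appendix \ref{ref:appendixA}: split $G$ into a part supported near $0$ and a bounded part away from $0$, use the convolutional square root $G=\tilde G*\tilde G$ to control the former, and verify that the near-diagonal part contributes negligibly both to the value $T^2\int\phi^2 G\phi^2$ on $E_T$ and to the Donsker--Varadhan cost; one should also fix a topology on occupation measures strong enough that the bounded part is continuous and is still realized by $A_T$ at the claimed exponential rate. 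Everything else --- regularizing the near-optimizer to be smooth and compactly supported, the elementary scaling optimizations, and converting the qualitative Donsker--Varadhan lower bound into the stated $\liminf$ --- is routine and follows the scheme of \cite[Chapter 3]{Chenbook}.
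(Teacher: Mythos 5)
Your proposal is correct in outline but takes a genuinely different route from the paper's. The paper proves the lower bound directly from the moment identity \eqref{eq:momentBrownian}: it rewrites $\mathbb{E}[\mathcal{G}^n]$ in the variables $(z_i,e_i)$, introduces an auxiliary $L^2$ density $k$, applies Cauchy--Schwarz to dominate $\sqrt{\mathbb{E}[\mathcal{G}^n]}$ from below by a sum over permutations, and then reads off the result as $\delta\,\langle\sqrt{k},T_k^{n-1}\sqrt{k}\rangle$ for a self-adjoint operator $T_k$ on $L^2_{\tilde G,R}$; a spectral argument and a final supremum over $k$ and over the truncation produce $2\log\rho$ with $\rho$ exactly in the form \eqref{eq:optrho}, with no occupation-measure LDP and no appeal to the Gagliardo--Nirenberg constant. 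You instead run a Donsker--Varadhan construction: condition on long clocks, tilt both occupation profiles toward a near-optimizer $\phi$ of \eqref{GNconst}, bound $\mathcal{G}$ from below on that event, and optimize over $T$ and a dilation, arriving at $\log(\tilde\kappa^4/2)$ and only then invoking Proposition~\ref{prop:GagNir} to translate into $2\log\rho$. Both are legitimate. The paper's argument is more self-contained and delivers the constant in the precise form needed to match the upper bound of Section~\ref{sec:contupperbound}; yours is more classical, makes the Gagliardo--Nirenberg connection transparent, but imports the DV lower bound and borrows one direction ($\rho\le\tilde\kappa^2/\sqrt 2$) of Proposition~\ref{prop:GagNir} --- which is fine, since that proposition is purely analytic and does not use the present theorem, so there is no circularity. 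Two small cautions. First, the ``delicate point'' you flag is less delicate than you suggest in the lower-bound direction: since $G\ge0$ you may simply discard the near-diagonal piece, work with a bounded truncation $G_{R,\delta}$ for which the bilinear form is weakly continuous, and send $\delta\to0,\ R\to\infty$ at the very end; the machinery of Appendix~\ref{ref:appendixA} is designed for the \emph{upper} bound and is not what is needed here. Second, the closing parenthetical --- that one could approximate a near-optimizer of the coupled $(f,k)$ problem \eqref{eq:optrho} directly by an occupation profile and so bypass Proposition~\ref{prop:GagNir} --- is under-justified: the optimizer of \eqref{eq:optrho} is a pair of functions with a constraint involving $\tilde G$, and there is no evident way to encode that as a single occupation density; that remark should be dropped or substantiated.
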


\begin{proof}
Recall our moment expression \eqref{eq:momentBrownian}. 
One fact of the convolutional square root $\tilde{G}(x_i,z_{\rho_x(i)})$ is that the value only depends on the difference $e_{\rho(i)}:= x_i - z_{\rho_x(i)}$. Rewriting the expression in terms of these variables gives us that the moment is given by, 
\begin{align*}
    &\mathbb{E}[\mathcal{G}^n] \\
    =&  \int_{(\mathbb{R}^4)^n} \text{d}z_1\ldots \text{d}z_n \left[\sum_{\rho} \int_{(\mathbb{R}^4)^n} \prod_{i=1}^n \tilde{G}(e_{\rho(i)}) P_{\tau}(z_{\rho(i)} +e_{\rho(i)} - z_{\rho(i-1)} - e_{\rho(i-1)}) \text{d}e_1 \ldots \text{d}e_n\right]^2.
\end{align*}
In the above expression, one should consider $\tilde{G}(e)$ as a measure on the set of $e$ variables. From direct computation, one can see that $\tilde{G}$ is a non-negative function and can function as a measure. This is key to the strategy.

Now, we let $k(z)$ be an $L^2$ function on $z$. 
Namely, $\int k^2(z) \text{d}z = 1$. 
Then, by applying the Cauchy-Schwartz inequality, we see that, 
\begin{equation} \label{eq:G_Blowerbnd}
\begin{aligned}
    &\sqrt{\frac{1}{(n!)^2}\mathbb{E}[\mathcal{G}^n]}
     = \sqrt{ \frac{1}{(n!)^2}\mathbb{E}[\mathcal{G}^n] \prod_{i=1}^n \int_{\mathbb{R}^4} k^2(z_i) \text{d} z_i}
     \\& \ge \frac{1}{n!} \int_{(\mathbb{R}^4)^{2n}} \sum_{\rho} \prod_{i=1}^n k(z_i) \text{d} z_i \prod_{i=1}^n \tilde{G}(e_{\rho(i)}) P_{\tau}(z_{\rho(i)} +e_{\rho(i)} - z_{\rho(i-1)} - e_{\rho(i-1)}) \text{d} e_{1} \ldots \text{d}e_n\\
    &=  \int_{(\mathbb{R}^4)^{2n}}\sqrt{k(z_n)} \prod_{i=2}^{n} \sqrt{k(z_i)} P_{\tau}(z_i + e_i - z_{i-1}-e_{i-1}) \sqrt{k(z_{i-1}) } \tilde{G}(e_{i})\\
    & \hspace{3 cm} \times \sqrt{k}(z_1) P_{\tau}(z_1 + e_1) \tilde{G}(e_1) \text{d}e_1 \text{d}z_1\ldots \text{d}e_n \text{d}z_n.
\end{aligned}
\end{equation}
All the terms that appear above are positive. Thus, we can restrict $\tilde{G}$ to its portion of its support and still derive a lower bound.
Let $\tilde{G}_{R,0}(z)$ denote the restriction of $\tilde{G}(z)$ to a portion of its support to when $|z| \le R$.
Furthermore, we also assume that $\sqrt{k}$ has a finite support $S$. 
These are all technical assumptions that we will remove later.

To complete our lower bound, we also need to introduce a new quantity: 
\begin{equation*}
\begin{aligned}
    &\delta:= \min_{x \in S+R} P_{\tau}(x).
\end{aligned}
\end{equation*}
With this quantity in hand, a lower bound on the last line of \eqref{eq:G_Blowerbnd} will be
\begin{equation} \label{eq:GBtransform}
    \begin{aligned}
    & \delta \int_{(\mathbb{R}^4)^{2n}} \sqrt{k(z_n)} \prod_{i=2}^{n} \sqrt{k(z_i)} P_{\tau}(z_i + e_i - z_{i-1}-e_{i-1}) \sqrt{k(z_{i-1}) } \tilde{G}_{R,0}(e_{i})\\
    & \hspace{3 cm} \times \sqrt{k}(z_1)  \text{d}e_1 \text{d}z_1\ldots \text{d}e_n \text{d}z_n.
    \end{aligned}
\end{equation}

Now, we consider the following space of functions with corresponding inner product:
\begin{equation*}
\begin{aligned}
    &L^2_{\tilde{G},R}:= \left\{f: \int_{(\mathbb{R}^4)^2} f^2(z,e) \tilde{G}_{R,0}(e) \text{d}z\text{d}e =1 \right\},\\
    &\langle f_1,f_2 \rangle = \int_{(\mathbb{R}^4)^2} f_1(z,e)f_2(z,e) \tilde{G}_{R,0}(e) \text{d}z \text{d}e. 
\end{aligned}
\end{equation*}
We also define the following operator on this space,
\begin{equation*}
    T_{k}(f)(\tilde{z},\tilde{e}):= \sqrt{k}(\tilde{z}) \int_{(\mathbb{R}^4)^2} P_{\tau}(\tilde{z}+\tilde{e}-z-e) \sqrt{k}(z) \tilde{G}_{R,0}(e) f(z,e) \text{d}z \text{d}e.
\end{equation*}
We see that $T_k$ is a symmetric operator on our space $L^2_{\tilde{G},0}$. Namely, we have,
\begin{equation} \label{eq:symmop}
\begin{aligned}
\langle f_1, T_k f_2 \rangle = \int_{(\mathbb{R}^4)^4} f_1(\tilde{z},\tilde{e}) \tilde{G}_{R,0}(\tilde{e}) \sqrt{k}(\tilde{z}) P_{\tau}(\tilde{z}+\tilde{e}-z-e) \sqrt{k}(z) \tilde{G}_{R,0}(e) f_2(z,e) \text{d}z \text{d}e \text{d}\tilde{z} \text{d} \tilde{e}.
\end{aligned}
\end{equation}
Note that we have introduced the operator $T_k$ we can rewrite the last line of \eqref{eq:GBtransform} as,
\begin{equation} \label{eq:b4eigen}
    \delta \langle \sqrt{k}, T_k^{n-1} \sqrt{k} \rangle.
\end{equation}
Let $h_{max}(z,e)$ be the eigenfunction corresponding to the largest eigenvalue of $T_k$.  Let $h(z,e)$ be an approximator of $h_{max}(z,e)$ with the further property that
it has a lower bound $ > 0$ on the support of $\sqrt{k}$. From the form of \eqref{eq:symmop}, we see that $h(z,e)$  has no need to have support outside of the support of $\text{supp}(k) \times B_R$. $B_R$ being the ball of radius $R$ around $0$. 
Also, let us define a new quantity as follows,
\begin{equation*}
    \epsilon:= \min_{(z,e) \in \text{supp}(k) \times B_R}\frac{h(z,e)}{\sqrt{k}(z)}.
\end{equation*}
Note that $\epsilon$ exists due to our assumption that $h$ has a lower bound greater than $0$ on the set above and, furthermore, the support of $h$ cannot be outside  $\text{supp}(k) \times B_R$. 
Thus, we can thus replace \eqref{eq:b4eigen} with the lower bound,
\begin{equation*}
    \delta \epsilon^2 \langle h, T_{k}^{n-1} h \rangle \ge \delta \epsilon^2 \langle h, h_{max} \rangle^2 \langle h_{max},T_k^{n-1} h_{max} \rangle,
\end{equation*}
when $n$ is odd.
We can derive a similar lower bound when $n$ is even.
Thus, we see that,
\begin{equation*}
\begin{aligned}
    &\frac{1}{n}\log \sqrt{ \frac{1}{(n!)^2} \mathbb{E}[\mathcal{G}^n]} \ge \frac{1}{n}\log(\delta \epsilon^2 \langle h,h_{max} \rangle^2) +\\
    & \log \sup_{f \in L^2_{\tilde{G},R}} \int_{(\mathbb{R}^4)^4} f(\tilde{z},\tilde{e}) \tilde{G}_{R,0}(\tilde{e}) \sqrt{k}(\tilde{z}) P_{\tau}(\tilde{z}+\tilde{e}-z-e) \sqrt{k}(z)\tilde{G}_{R,0}(e) f(z,e)\text{d}\tilde{z}\text{d}\tilde{e}\text{d}z \text{d}e.
\end{aligned}
\end{equation*}

Now, as one considers the limit $n \to \infty$, the term $\frac{1}{n} \log(\delta \epsilon^2 \langle h, h_{max} \rangle)$ makes no contribution. 
Thus,
\begin{equation*}
\begin{aligned}
    &\liminf_{n \to \infty} \frac{1}{n} \log \frac{1}{(n!)^2} \mathbb{E}[\mathcal{G}^n] \\ & \ge 2 \log \sup_{f\in L^{2}_{\tilde{G},R}} \int_{(\mathbb{R}^4)^4} f(\tilde{z},\tilde{e}) \tilde{G}_{R,0}(\tilde{e}) \sqrt{k}(\tilde{z}) P_{\tau}(\tilde{z}+\tilde{e}-z-e) \sqrt{k}(z) \tilde{G}_{R,0}(e) f(z,e) \text{d}\tilde{z} \text{d}\tilde{e} \text{d}z\text{d}e.
\end{aligned}
\end{equation*}

Next, we observe that if a function is in $L^2_{\tilde{G},R}$ then it is in $L^2_{\tilde{G},\tilde{R}}$ for any $\tilde{R} \ge R$. 
Thus, we may first replace the restricted maximum with $\tilde{G}_{R,0}$ with,
\begin{equation*}
    \sup_{f \in L^{2}_G} \int_{(\mathbb{R}^4)^4} f(\tilde{z},\tilde{e}) \tilde{G}(\tilde{e}) \sqrt{k}(\tilde{z}) P_{\tau}(\tilde{z} + \tilde{e} -z-e) \sqrt{k}(z) \tilde{G}(e) f(z,e)
    \text{d}\tilde{z} \text{d}\tilde{e} \text{d}z\text{d}e,
\end{equation*}
where $L^2_{\tilde{G}}$ is the following space:
\begin{equation*}
    L^{2}_G:=\{f:\int_{(\mathbb{R}^4)^2} f^2(z,e) \tilde{G}(e) \text{d}z \text{d}e =1 \}.
\end{equation*}

Finally, since the choice of $k$ was arbitrary, we may finally consider the maximum over all $k$. 
Thus, we ultimately derive,
\begin{equation*}
\begin{aligned}
    &\liminf_{n \to \infty}\frac{1}{n}  \log \frac{1}{(n!)^2} \mathbb{E}[\mathcal{G}^n]
    \\ &\ge 2 \log \sup_{\substack{f \in L^2_G \\k: \int_{\mathbb{R}^4} k^2(z) \text{d}z =1}} \int_{(\mathbb{R}^4)^4} f(\tilde{z},\tilde{e}) \sqrt{k}(\tilde{z}) \tilde{G}(\tilde{e})P_{\tau}(\tilde{z}+\tilde{e}-z-e) \tilde{G}(e) \sqrt{k}(z) f(z,e) \text{d}\tilde{z} \text{d} \tilde{e} \text{d}z \text{d}e
    \end{aligned}
\end{equation*}
and we obtain the desired result. 
\end{proof}

\section{Upper Bound for the intersection of Brownian Motions} \label{sec:contupperbound}

In this section, we will establish the following result, which gives the corresponding upper bounds of the moments of $\mathcal{G}$. The following theorem, combined with Theorem \ref{thm:onGBtaulwr}, will give us Theorem \ref{thm:onGBtau}.

\begin{thm} \label{thm:uprGBtau}
Consider $\mathcal{G}$.  We have the following  upper bound for the large moments:
\begin{equation}
\limsup_{n \to \infty}\frac{1}{n}\log \frac{1}{(n!)^2} \mathbb{E}[\mathcal{G}^n] \le 2 \log \rho,
\end{equation}
 where $\rho$  is the optimization problem defined in \eqref{eq:optrho}.
   
\end{thm}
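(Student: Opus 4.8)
The plan is to establish the upper bound by mirroring the lower bound argument of Theorem~\ref{thm:onGBtaulwr} but replacing the convenient Cauchy--Schwarz step (which loses nothing when producing a lower bound) with a careful combinatorial estimate that controls the permutation sum from above. Starting from the moment expression \eqref{eq:momentBrownian}, written in the difference variables $e_{\rho(i)} := x_i - z_{\rho(i)}$ as in the proof of Theorem~\ref{thm:onGBtaulwr}, the object to bound is
\begin{equation*}
\frac{1}{(n!)^2}\int_{(\mathbb{R}^4)^n}\!\!\text{d}z_1\cdots\text{d}z_n\left[\sum_{\rho}\int_{(\mathbb{R}^4)^n}\prod_{i=1}^n \tilde{G}(e_{\rho(i)})\,P_{\tau}\big(z_{\rho(i)}+e_{\rho(i)}-z_{\rho(i-1)}-e_{\rho(i-1)}\big)\,\text{d}e_1\cdots\text{d}e_n\right]^2.
\end{equation*}
The key structural point is that, after introducing a probability density $k^2$ on the $z$-variables to absorb the $\text{d}z_i$ integrations, the inner integral wants to be read as an iterated application of an operator resembling $T_k$ acting on the Hilbert space $L^2_{\tilde G}$; the largest eigenvalue of that operator is exactly $\rho$. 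So the heart of the proof is to show that the permutation sum, together with the $z$-integrations, can be dominated (up to subexponential factors) by $\rho^{2n}(n!)^2$ times a bounded constant.

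The steps I would carry out, in order: (1) Split $\tilde{G} = \tilde{G}_{R,0} + \tilde{G}_{R,\infty}$ into a part supported near the origin and a part supported away from it, and invoke the estimates of Appendix~\ref{ref:appendixA} to argue that the contribution of the singular near-origin part $\tilde{G}_{R,0}$ is asymptotically negligible in the $\tfrac1n\log$ scale; this reduces matters to a bounded, regular kernel. (2) Discretize $\mathbb{R}^4$ on a lattice of small mesh and truncate to a large box, turning the integrals into finite sums; control the discretization error via Appendix~\ref{sec:AppendixB} and Lemma~\ref{lem:removeepsilon}, so that the state space is finite — this is what makes the subsequent combinatorial replacement lossless at the exponential scale. (3) On the finite state space, replace the sum over permutations $\rho$ of $\{z_1,\dots,z_n\}$ with the sum over all functions (i.e.\ allow each index to choose any of the $z_j$ independently); because the number of distinct points is bounded by the fixed box size while $n\to\infty$, the overcounting costs only a factor $e^{o(n)}$ — this is the analogue of the standard argument in \cite{Chenbook} relating intersection moments to iterated transition kernels. (4) Having made this replacement, recognize the resulting expression as $\delta'\,\langle g, T_k^{n-1} g\rangle$ for an appropriate test vector $g$ and a suitable $k$, so that $\frac1n\log$ of it converges to $2\log(\text{largest eigenvalue of }T_k)$, and then optimize over $k$ to obtain exactly $2\log\rho$ from \eqref{eq:optrho}. (5) Combine with Corollary~\ref{cor:ongb}-type manipulations to pass back from $\mathcal{G}_{t_1,t_2}$ to $\mathcal{G}$ and conclude.

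The main obstacle — and where the real work lies — is Step~(3): justifying that replacing the permutation sum by an unrestricted sum over functions from $\{1,\dots,n\}$ to the (discretized, truncated) state space loses only a subexponential factor. The naive bound $n!\le (\#\text{states})^n$ is useless if $\#\text{states}$ is comparable to $n$, so one genuinely needs the state space to be a \emph{fixed} finite set independent of $n$, which in turn forces the discretization-and-compactification of Steps~(1)--(2) to be done with quantitative control on all error terms, uniformly in $n$. A secondary difficulty is that the natural operator coming out of this procedure has the awkward $\tilde{G}$-weighted normalization of \eqref{eq:optrho} rather than a clean $L^2$ normalization, so one must be careful that the eigenvalue extracted from $\langle g, T_k^{n-1} g\rangle$ is precisely $\rho$ and not some nearby quantity; keeping the bookkeeping of the weights $\tilde{G}(e)$ and the densities $k^2(z)$ consistent throughout the discretization is where sign errors and lost constants are most likely to creep in.
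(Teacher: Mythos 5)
Your plan matches the architecture of the paper's proof almost step for step: regularize the kernel, discretize and compactify, dominate the permutation sum on the resulting finite state space, recognize the result as an iterated operator $\langle g, T_k^{n-1}g\rangle$ whose largest eigenvalue is $\rho$, and then unwind the regularizations. Two points of imprecision are worth flagging. First, in Step 1 the split is not quite what the paper does: the kernel one keeps is $\tilde{G}_{R,\delta}$, i.e.\ $\tilde{G}$ restricted to the \emph{annulus} $\delta \le |z| \le R$ (flattened for $|z|<\delta$), and what gets discarded is \emph{both} the singular core $|z|<\delta$ (handled by Lemma~\ref{lem:Brownianorigin}) \emph{and} the far tail $|z|>R$ (which contributes a bounded constant); your description, which keeps a kernel ``supported away from the origin'' and discards a near-origin piece, has this essentially backwards — the interesting contribution is concentrated near the origin and it is only the extreme singular core that is negligible. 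Second, in Step 3 the actual mechanism is not merely ``overcounting costs $e^{o(n)}$ because the state space is fixed''; the paper rewrites the permutation sum exactly via the empirical measure $\mu_p$ and its square-root density $\phi_{\mu_p}$, then drops the constraint $\mathbbm{1}(\mu_p = \mu_q)$, and the residual combinatorial factor $\frac{1}{n!}\prod_r \frac{(n\mu_p(r))!}{(\phi_{\mu_p}(r))^{n\mu_p(r)}}$ is shown (after summing its square over configurations $p_1,\dots,p_n$, via \cite[eq.~(3.1.11)]{Chenbook}) to be subexponential; your heuristic gestures at this but omits the $\phi_{\mu_p}$ reweighting that makes it work. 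Finally, Step 5 is unnecessary here — Theorem~\ref{thm:uprGBtau} is stated directly for $\mathcal{G}$ with independent exponential times, so no passage back from $\mathcal{G}_{t_1,t_2}$ is required; the transfer to $\mathcal{G}([0,1])$ is a separate corollary.
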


\begin{proof}

The derivation of the upper bound is far more technical. The singularity of $G$ near the origin is an obstacle; it prevents one from bounding $G$ from above by a constant in appropriate locations. However, to the scale that we are concerned, the origin has a vanishingly small contribution to the asymptotic moments. Similarly, there are some issues due to the infinite support of $G$. We first split $G$ in a main term away from the origin and $\infty$ and an error term around the origin and $\infty$. 

We first define the function $\tilde{G}_{R,\delta}(z)$ as $\tilde{G}(z)$ when $\delta \le |z| \le R$. The value will be $0$ when $|z| \ge R$. Finally, $\tilde{G}_{R,\delta}(z) = \tilde{G}_{R,\delta}(\delta)$ when $|z| \le \delta$. Once we have introduced these cutoffs, we observe the following,
\begin{equation*}
G(x-y) = \tilde{G}_{R,\delta} * \tilde{G}_{R,\delta}(x-y) + G^{\circ}(x-y).
\end{equation*}
The function $G^{\circ}(x)$ can be bounded by $G(x) \mathbbm{1}[|x| \le \delta] + [f(\delta)+ g(R)]$, where $f(\delta)$ and $g(R)$ are some functions that go to $0$ as $\delta$ goes to $0$ and $R$ goes to $\infty$ respectively.

Furthermore, we remark that for general random variables $F$ and $H$ that,
\begin{equation} \label{eq:gencompar}
\frac{1}{n} \log \frac{1}{(n!)^2} \mathbb{E}[(F+H)^n] \le \frac{1}{n} \log \left[\left(\frac{1}{(n!)^2} \mathbb{E}[F^n] \right)^{1/n} + \left(\frac{1}{(n!)^2} \mathbb{E}[H^n] \right)^{1/n}
 \right]^n.
\end{equation}
If $\rho_F$ is the limit $\frac{1}{n} \log \left[ \frac{1}{(n!)^2}\mathbb{E}[F^n] \right]$, then we see that $\rho_{F+H} \le \log (\exp[\rho_F] + \exp[\rho_H])$.

Now, it is clear that
$$
\int_{0}^{\tau_1} \int_{0}^{\tau_2} [f(\delta) + g(R)] \text{d}s \text{d} t \le \tau_1 \tau_2 [f(\delta) + g(R)].
$$
Thus, we see that,
$$
\lim_{\delta \to 0} \lim_{R \to \infty}\frac{1}{n} \log \frac{1}{(n!)^2} \mathbb{E}\left[\left(\int_{0}^{\tau_1} \int_{0}^{\tau_2} [f(\delta) + g(R)] \text{d}s \text{d} t \right)^n \right] = - \infty.
$$
From the results in the Appendix, we have from Lemma \ref{lem:Brownianorigin} that,
\begin{equation}
\lim_{\delta \to 0} \frac{1}{n} \log \frac{1}{(n!)^2} \mathbb{E}\left[ \left( \int_{0}^{\tau_1} \int_{0}^{\tau_2} G(B_t - B_s') \mathbbm{1}[|B_t - B_s'| \le \delta] \text{d}t \text{d}s \right)\right] = -\infty.
\end{equation}
Hence, we can use these facts as well as \eqref{eq:gencompar} to assert that 
$$\lim_{\delta \to 0} \lim_{R \to \infty} \lim_{n \to \infty}\frac{1}{n} \log \frac{1}{(n!)^2} \mathbb{E}[(G^{\circ}(x-y))^n] = 0.$$
Provided now that one can show the following lemma, we will be done.
\begin{lem} \label{lem:uprbnd}
It holds that, 
\begin{equation}
\begin{aligned}
&\limsup_{n \to \infty} \frac{1}{n} \log \frac{1}{(n!)^2}  \mathbb{E}\left[ \left(\int_{0}^{\tau_1} \int_{0}^{\tau_2} (\tilde{G}_{R,\delta}* \tilde{G}_{R,\delta})(B_t - B_s') \text{d}t \text{d}s \right)^n\right] \le 2 \log \rho.
\end{aligned}
\end{equation}
We denote the quantity inside the expectation on the first line as $\mathcal{G}_{R,\delta}$.

\end{lem}
\end{proof}

\subsection{The proof of Lemma \ref{lem:uprbnd}}
In this section, we will prove the following intermediary result.
\begin{lem} \label{lem:uprBRdeltaepsi}
Recall the notation $\mathcal{G}_{R,\delta}$ from Lemma \ref{lem:uprbnd}. For any choice of $M$ and $\epsilon$, we have that
\begin{equation*}
 \limsup_{n \to \infty} \frac{1}{n} \log \frac{1}{(n!)^2} \mathbb{E}[ (\mathcal{G}_{R,\delta})^n] \le 2 \log \rho_{M,R,\delta,\epsilon},
\end{equation*}
where $\rho_{M,R,\delta,\epsilon}$ is given by the following optimization problem: 
\begin{equation*}
    \sup_{\substack{ \sum_q k^2(q) = 1\\
 \sum_q \int_{\mathbb{R}^4} \text{d} e f^2(q,e) \tilde{G}^{\epsilon}_{R,\delta}(e) =1}} \sum_{\tilde{q},q} \int_{(\mathbb{R}^4)^2} f(\tilde{q},\tilde{e}) \tilde{G}^{\epsilon}_{R,\delta}(\tilde{e}) P_{\tau,M}(\tilde{q} + \tilde{e} - q- e) \tilde{G}^{\epsilon}_{R,\delta}(e) f(q,e) \text{d} e \text{d} \tilde{e}.
\end{equation*}
    Here, $P_{\tau,M}$ is a compactified version of the random walk transition given by
    $$
    P_{\tau,M}(z) = \sqrt{\sum_{l \in \mathbb{Z}^4} P_{\tau}^2(Ml +z)},
    $$
    and $\tilde{G}^{\epsilon}_{R,\delta}$ is a  version of $\tilde{G}_{R,\delta}$ given by
    $$
    \tilde{G}^{\epsilon}_{R,\delta}(e) = \sup_{|d| \le \epsilon} \tilde{G}_{R,\delta}(e+d).
    $$
\end{lem}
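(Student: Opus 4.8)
\textbf{Proof proposal for Lemma \ref{lem:uprBRdeltaepsi}.}

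The plan is to take the Le Gall-type moment formula \eqref{eq:momentBrownian}, specialized to the kernel $\tilde G_{R,\delta}$, and successively replace it by a genuine Markov transition kernel on a finite state space, paying only $o(n)$ in the exponent at each step. First I would write $\mathcal{G}_{R,\delta}$ in the split form $\int_{\mathbb{R}^4}\mathrm{d}z\,\int_0^{\tau_1}\tilde G_{R,\delta}(B_t-z)\,\mathrm{d}t\,\int_0^{\tau_2}\tilde G_{R,\delta}(B'_s-z)\,\mathrm{d}s$, expand the $n$-th moment, and use Cauchy--Schwarz exactly as in \eqref{eq:Cauch-Schwmom}/\eqref{eq:scallowe4r4bound} to reduce to the symmetric case where both Brownian clocks run to the same exponential time; absorbing the resulting $\mathbb{E}[(\min(\tau_1,\tau_2))^n]$ and $\mathbb{E}[\tau_1^n]\mathbb{E}[\tau_2^n]$-type factors costs nothing in $\tfrac1n\log\tfrac{1}{(n!)^2}(\cdot)$ because those are $e^{O(n)}$ after dividing by $(n!)^2$ and taking $n$-th roots. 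This leaves us estimating, up to a harmless constant base, the symmetric quadratic-form expression $\langle\sqrt k, T_k^{n-1}\sqrt k\rangle$ with $T_k$ built from $P_\tau$ and $\tilde G_{R,\delta}$ as in Section \ref{sec:lwrbndbrown}, for an arbitrary $L^2$ weight $k$ which we are free to insert by the same Cauchy--Schwarz trick (inserting $\prod_i\int k^2(z_i)\,\mathrm{d}z_i=1$).

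Next come the two discretizations that produce the ``$M$'' and ``$\epsilon$'' in $\rho_{M,R,\delta,\epsilon}$. Since $\tilde G_{R,\delta}$ is supported in the ball $B_R$ and is bounded and compactly supported, the relevant $z$-variables in the product live in $\mathrm{supp}(k)+B_R$, which we may take bounded; I would tile $\mathbb{R}^4$ by a lattice $M\mathbb{Z}^4$ with $M$ large and identify points modulo $M$, i.e. wrap the configuration onto the torus $(\mathbb{R}^4/M\mathbb{Z}^4)$. Under this folding the free transition $P_\tau$ becomes $\sum_{l}P_\tau(Ml+\cdot)$; to keep a symmetric, Cauchy--Schwarz-friendly object one uses instead $P_{\tau,M}(z)=\big(\sum_{l\in\mathbb{Z}^4}P_\tau^2(Ml+z)\big)^{1/2}$, which dominates the relevant bilinear contributions after one more Cauchy--Schwarz in the convolution; this is exactly the step where $P_{\tau,M}$ appears. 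Then I would discretize the $e$-variables at scale $\epsilon$: replacing $\tilde G_{R,\delta}(e)$ by its supremum $\tilde G^{\epsilon}_{R,\delta}(e)=\sup_{|d|\le\epsilon}\tilde G_{R,\delta}(e+d)$ over an $\epsilon$-cell, and replacing the $z$-integration by a sum over $\epsilon$-cells $q$, only increases each factor in the product, so it gives an upper bound; here the fact that the state space has become compact is what makes the number of cells $|\{q\}|$ finite and hence the ``sum over permutations becomes sum over functions'' replacement lossless up to a subexponential combinatorial factor (the number of surjection-type corrections is $e^{o(n)}$ because the alphabet size is fixed while $n\to\infty$). After these replacements the $n$-th moment is bounded by a finite symmetric matrix raised to the $n$-th power, whose norm is its top eigenvalue, and the associated Rayleigh quotient is precisely $\rho_{M,R,\delta,\epsilon}$ with the stated normalizations $\sum_q k^2(q)=1$ and $\sum_q\int \mathrm{d}e\,f^2(q,e)\tilde G^{\epsilon}_{R,\delta}(e)=1$; taking $\tfrac1n\log$ and $n\to\infty$ gives the claimed $2\log\rho_{M,R,\delta,\epsilon}$ (the factor $2$ coming from the square in \eqref{eq:momentBrownian}, one power $\rho$ per Brownian motion).

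I expect the main obstacle to be making the ``permutations $\to$ arbitrary functions'' replacement rigorous without losing an exponential factor: one must check that after compactification the number of distinct $z$-values that actually occur in a dominant configuration is much smaller than $n$, so that over-counting configurations $y_1,\dots,y_n$ where each $y_i$ ranges independently over the finite alphabet only multiplies the bound by $e^{o(n)}$; equivalently one needs the entropy cost $\log(\text{alphabet size})^n$ versus $\log n!$ bookkeeping to be controlled, which is why the finiteness of the state space (forced by both the $M$-compactification and the $\epsilon$-discretization) is essential. A secondary technical point is verifying that the Cauchy--Schwarz steps used to pass to $P_{\tau,M}$ and to the symmetric-time moment really do produce an upper bound with the correct symmetric kernel and do not disturb the normalization conditions; these are routine but must be done in the right order. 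Once Lemma \ref{lem:uprBRdeltaepsi} is in hand, Lemma \ref{lem:uprbnd} follows by letting $M\to\infty$, $\epsilon\to 0$ (and $R\to\infty$, $\delta\to0$), showing $\rho_{M,R,\delta,\epsilon}\to\rho$; the continuity of the variational problem \eqref{eq:optrho} under removing these regularizations is deferred to Lemma \ref{lem:removeepsilon} and Appendix \ref{sec:AppendixB}.
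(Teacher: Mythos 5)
Your overall architecture matches the paper's proof: split into the convolutional square root, wrap onto the torus $M\mathbb{Z}^4$ to obtain the symmetric kernel $P_{\tau,M}$, discretize the base space into $\epsilon$-cells and pay for it by replacing $\tilde G_{R,\delta}$ with its cell-wise supremum $\tilde G^\epsilon_{R,\delta}$, convert the sum over permutations into a sum over arbitrary sequences at a subexponential combinatorial cost, and finally read off the top eigenvalue of a symmetric operator with the stated normalizations. That is precisely the route taken in Section \ref{sec:contupperbound}, so in spirit you have reproduced the argument.

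A few points where your description is off or underdeveloped. First, the initial Cauchy--Schwarz reduction ``to the symmetric case where both clocks run to the same exponential time'' is not part of this lemma; the square in \eqref{eq:momentBrownian} already comes for free from the independence of $B$ and $B'$, and the $\tau_1\ne\tau_2$ manipulations \eqref{eq:Cauch-Schwmom}/\eqref{eq:scallowe4r4bound} are used later (in Corollary \ref{cor:ongb}) to pass from $\mathcal{G}$ to $\mathcal{G}([0,1])$, not here. Second, it is the $z$-variables (the base points of the $\epsilon$-cells), not the $e$-variables, that get discretized; the appearance of $\tilde G^\epsilon_{R,\delta}$ is a by-product of shifting $e_i\mapsto e_i+d_i$ inside each cell, and the paper implements the step by a triangle inequality in $L^2(Q_\epsilon)^n$ together with a sup bound over the cell, not by literally replacing the $z$-integral by a sum. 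Third, and most substantively: you correctly identify the ``permutations $\to$ arbitrary functions'' replacement as the crux, but your heuristic justification is not quite the right one. It is not about the dominant configuration having few distinct values, nor is a naive alphabet-size entropy count sufficient. The paper introduces the empirical measure $\mu_p(r)=\tfrac1n\#\{i:p_i=r\}$ and its square-root profile $\phi_{\mu_p}$, replaces the permutation sum by a sum over all $q_1,\dots,q_n$ with $\mu_q=\mu_p$ picking up the combinatorial factor $\prod_{r}(n\mu_p(r))!$, normalizes by $(\phi_{\mu_p}(r))^{n\mu_p(r)}$ so that the test vector $\sqrt{\phi_{\mu_p}/\int\tilde G^\epsilon_{R,\delta}}$ has inner-product norm at most $1$, and then invokes the precise combinatorial estimate \cite[(3.1.11)]{Chenbook} to show that $\tfrac1n\log\sum_{p_1,\dots,p_n}\big(\tfrac1{n!}\prod_r\frac{(n\mu_p(r))!}{(\phi_{\mu_p}(r))^{n\mu_p(r)}}\big)^2\to 0$. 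Without this specific device the replacement step is not justified, so you should treat this as the missing lemma in your sketch rather than something that follows from ``finite alphabet''.
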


In the next section, we will show that $\limsup_{M \to \infty} \limsup_{\epsilon \to 0} \rho_{M,R,\delta,\epsilon} \le \rho$, which will complete the proof of Lemma \ref{lem:uprbnd}.
\begin{proof}

We will have to find an appropriate discretization in order to understand this term carefully. The first step is to write our moment as a norm of a vector in some appropriate vector space and then apply the triangle inequality.
We consider a space of vectors whose entries are indexed by $(l_1,\ldots,l_n) \in (\mathbb{Z}^4)^n$. The norm of such a vector will be given by $\sum_{l_1,\ldots,l_n} (X_{l_1,\ldots,l_n})^2$.  

Now, consider the vector $X^{\rho,e_1,\ldots,e_n}$ whose $l_1,\ldots,l_n$ entry is given by,
\begin{align*}
    &[X^{\rho,e_1,\ldots,e_n}(z_1,\ldots,z_n)]_{l_1,\ldots,l_n} \\
    =& \prod_{i=1}^n \tilde{G}_{R,\delta}(e_{\rho(i)}) P_{\tau}(M l_{\rho(i)} + z_{\rho(i)} +e_{\rho(i)} - M l_{\rho(i-1)} - z_{\rho(i-1)} - e_{\rho(i-1)}).
\end{align*}
Then, we see that we can write, $\mathbb{E}[\mathcal{G}^n]$ as,
\begin{equation*}
    \frac{1}{(n!)^2}\mathbb{E}[(\mathcal{G}_{R,\delta})^n] = \int_{\{(-\frac{M}{2},\frac{M}{2}\}^{4})^{n}} \text{d} z_1\ldots \text{d}z_n  \bigg|\bigg|\frac{1}{n!}\sum_{\rho} \int_{e_1,\ldots,e_n} \text{d}e_1 \ldots \text{d}e_n X^{\rho,e_1,\ldots,e_n}(z_1,\ldots,z_n)\bigg| \bigg|^2.
\end{equation*}
Then, we apply the triangle inequality to state that this is less than,
\begin{equation*}
    \le \int_{\{(-\frac{M}{2},\frac{M}{2}]^4\}^n} \text{d} z_1\ldots \text{d}z_n \left[ \frac{1}{n!} \sum_{\rho} \int_{e_1,\ldots,e_n} ||X^{\rho,e_1,\ldots,e_n}(z_1,\ldots,z_n)|| \right]^2.
\end{equation*}
Recall the definition,
$$
P_{\tau,M}(z) = \sqrt{\sum_{l \in \mathbb{Z}^4} P_{\tau}^2(M l +z)}.
$$
We see that,
\begin{equation*}
    ||X^{\rho,e_1,\ldots,e_n}(z_1,\ldots,z_n)|| = \prod_{i=1}^n \tilde{G}_{R,\delta}(e_{\rho(i)}) P_{\tau,M}(z_{\rho(i)} + e_{\rho(i)} - z_{\rho(i-1)} - e_{\rho(i-1)}).
\end{equation*}
Thus, we see that,
\begin{equation} \label{eq:1stdiscret}
\begin{aligned}
    &\frac{1}{(n!)^2} \mathbb{E}[\mathcal{G}^n]\\& \le \int_{\{(-\frac{M}{2},\frac{M}{2}]^{4}\}^n} \text{d}z_1\ldots \text{d}z_n \left[ \frac{1}{n!} \sum_{\rho} \int_{(\mathbb{R}^4)^n} \text{d}e_1\ldots \text{d}e_n \prod_{i=1}^n \tilde{G}_{R,\delta}(e_{\rho(i)}) P_{\tau,M}(z_{\rho(i)} +e_{\rho(i)} - z_{\rho(i-1)} - e_{\rho(i-1)}) \right]^2.
\end{aligned}
\end{equation}
We still need to discretize the region $(-\frac{M}{2},\frac{M}{2}]^{4}$. 
Fix $\epsilon$ of the form $\frac{M}{2I}$ for some large integer $I$.  
Let $Q_{\epsilon}=(-\epsilon,\epsilon]^4$. Let $P_{\epsilon}$ be a grid of points in $[-\frac{M}{2},\frac{M}{2}]^4$ such that the disjoint union $\cup_{p \in P_{\epsilon}}  p + Q_{\epsilon} =(-\frac{M}{2},\frac{M}{2}]^4 $. 
A quantity that will be useful in trying to understand the discretization would be the following,
\begin{equation*}
    F^{\tilde{G}_{R,\delta}}(z_1,\dots,z_n):= \int_{(\mathbb{R}^4)^n} \text{d}e_1 \ldots \text{d}e_n \prod_{i=1}^n \tilde{G}_{R,\delta}(e_i) P_{\tau,M}(z_i + e_i - z_{i-1} - e_{i-1}).
\end{equation*}

Now, we discuss what happens to the function $F^{\tilde{G}_{R,\delta}}(z_1,\ldots,z_n)$ under a small change to each of its entries $F^{\tilde{G}_{R,\delta}}(z_1 +d_1,\ldots,z_n +d_n)$ where the perturbations $d_i$ are understood to be small, i.e.,d $|d_i| \le \mathfrak{d}$ for some fixed small constant $\mathfrak{d}$. Namely, we see that if we change variable $\hat{e}_i = e_i +d_i$ then an alternative way to write $F^{\tilde{G}_{R,\delta}}(z_1+d_1,\ldots,z_n +d_n)$ would be,
\begin{equation*}
    \int_{(\mathbb{R}^4)^n} \text{d}\hat{e}_1 \ldots \text{d}\hat{e}_n \prod_{i=1}^n\tilde{G}_{R,\delta}(\hat{e}_i - d_i) P_{\tau,M}(z_i + \hat{e}_i - z_{i-1} - \hat{e}_{i-1}).
\end{equation*}
Recall the definition $\tilde{G}^{\mathfrak{d}}_{R,\delta}$ as,
\begin{equation*}
    \tilde{G}^\mathfrak{d}_{R,\delta}(e) = \sup_{|d| \le \mathfrak{d}} \tilde{G}_{R,\delta}(e + d),
\end{equation*}
we thus see that 
$$
F^{\tilde{G}_{R,\delta}}(z_1+d_1,\ldots,z_n+d_n) \le F^{\tilde{G}^{\mathfrak{d}}_{R,\delta}}(z_1,\ldots,z_n),
$$
provided that all $|d_i| \le \mathfrak{d} $. 
Consider the function space $L^2(Q_\epsilon)$ with norm given by,
\begin{equation*}
||f||^2_{L^2(Q_\epsilon)}= \int_{(Q_\epsilon)^n} f^2(z_1,\ldots,z_n) \text{d}z_1\ldots \text{d}z_n.
\end{equation*}
Thus, we can rewrite the right hand side of \eqref{eq:1stdiscret} as,
\begin{equation*}
    \sum_{p_1,\ldots,p_n \in P_{\epsilon}} \left \|\frac{1}{n!}\sum_{\rho}  Y^{\rho}_{p_1,\ldots,p_n}\right\|^2,
\end{equation*}
where $Y^{\rho}_{p_1,\ldots,p_n}$ is the function with values,
$$
Y^{\rho}_{p_1,\ldots,p_n}(z_1,\ldots,z_n) = F^{\tilde{G}_{R,\delta}}(z_{\rho(1)} + p_{\rho(1)},\ldots, z_{\rho(n)} + p_{\rho(n)}).
$$
As before, we apply a slightly different triangle inequality to deduce that 
\begin{equation*}
\begin{aligned}
     & \frac{1}{(n!)^2}\mathbb{E}[(\mathcal{G}_{R,\delta})^n ]  \le \sum_{p_1,\ldots,p_n \in P_{\epsilon}} \left[ \frac{1}{n!} \sum_{\rho} ||Y^{\rho}_{p_1,\ldots,p_n}|| \right]^2\\
     &= (\epsilon^{4n})\sum_{p_1.\ldots,p_n \in P_{\epsilon}} \left[ \frac{1}{n!} \sum_{\rho} \left(\frac{1}{(\epsilon)^{4n}}\int_{[-\epsilon/2,\epsilon/2]^4} \text{d} d_1 \ldots \text{d} d_n  F^{\tilde{G}_{R,\delta}}(p_{\rho(1)} + d_1,\ldots,p_{\rho(n)}+d_n)^2 \right)^{1/2}\right]^2\\
     &\le \epsilon^{4n} \sum_{p_1,\ldots,p_n \in P_{\epsilon}} \left[ \frac{1}{n!} \sum_{\rho} F^{\tilde{G}^{\epsilon}_{R,\delta}}(p_{\rho(1)},\ldots, p_{\rho(n)}) \right]^2\\
     &= \epsilon^{4n} \sum_{p_1,\ldots,p_n \in P_{\epsilon}}\left[\frac{1}{n!} \sum_{\rho} \int_{(\mathbb{R}^4)^n}  \text{d}e_1 \ldots \text{d}e_n  \prod_{i=1}^n \tilde{G}^{\epsilon}_{R,\delta}(e_{\rho(i)}) P_{\tau,M}(p_{\rho(i)} + e_{\rho(i)} - p_{\rho(i-1)} - e_{\rho(i-1)}) \right]^2.
\end{aligned}
\end{equation*}
Let us consider the term inside the brackets. Consider the point measure $\mu$ given by,
\begin{equation*}
    \mu_{p} =\frac{1}{n} \sum_{i=1}^n \delta_{p_i},
\end{equation*}
thus, we have a point measure supported at each point $p_i$. 
Related to the measure $\mu$, we can also define the following function on the points $q$ of $P_{\epsilon}$: 
$$
\phi_{\mu}(p) = \sqrt{\mu(p)}.
$$
This function is normalized so that,
\begin{equation*}
    \sum_{p} (\phi_{\mu}(p))^2 = 1.
\end{equation*}
We thus have that,
\begin{equation*}
\begin{aligned}
    &\frac{1}{n!}\sum_{\rho} \int_{(\mathbb{R}^4)^n} \text{d}e_1\ldots \text{d}e_n \prod_{i=1}^n \tilde{G}^{\epsilon}_{R,\delta}(e_{\rho(i)}) P_{\tau,M}(p_{\rho(i)} + e_{\rho(i)} -p_{\rho(i-1)} -e_{\rho(i-1)})\\
    &= \frac{1}{n!}\sum_{\rho}\sum_{q_1,\ldots,q_n} \mathbbm{1}(p_{\rho(i)} = q_i, \forall i) \int_{(\mathbb{R}^4)^n}\text{d}e_1\ldots \text{d}e_n \prod_{i=1}^n 
    \tilde{G}^{\epsilon}_{R,\delta}(e_i) P_{\tau,M}(q_i + e_i - q_{i-1} - e_{i-1})\\
    &= \frac{1}{n!} \sum_{q_1,\ldots,q_n} \mathbbm{1}(\mu_p = \mu_q) \prod_{r \in P_{\epsilon}} (n\mu_p(r))! \int_{(\mathbb{R}^4)^n} \text{d}e_1\ldots \text{d}e_n
 \prod_{i=1}^n  \tilde{G}^{\epsilon}_{R,\delta}(e_i) P_{\tau,M}(q_i + e_i -q_{i-1} - e_{i-1})\\
 &= \frac{1}{n!} \sum_{q_1,\ldots,q_n} \mathbbm{1}(\mu_p = \mu_q) \prod_{r \in P_{\epsilon}} \frac{(n\mu_p(r))!}{(\phi_{\mu_p}(r))^{n \phi_{\mu_p}(r)}} \\
 & \times\int_{(\mathbb{R}^4)^n} \text{d}e_1\ldots \text{d}e_n \sqrt{\phi_{\mu_p(q_n)}}
 \prod_{i=2}^n \sqrt{\phi_{\mu_p}(q_{i})} \tilde{G}^{\epsilon}_{R,\delta}(e_i) P_{\tau,M}(q_i + e_i -q_{i-1} - e_{i-1}) \sqrt{\phi_{\mu_p}(q_{i-1})} \\
 & \hspace{5 cm}\times \tilde{G}^{\epsilon}_{R,\delta}(e_1) \sqrt{\phi_{\mu_p}(q_1+e_1)} P_{\tau,M}(q_1 + e_1)
  \end{aligned}
\end{equation*}
and it is bounded by
 \begin{equation}\label{eq:upperbndexp}
\begin{aligned}
 &  [\max_z P_{\tau,M}(z)] \frac{1}{n!} \prod_{r \in P_{\epsilon}} \frac{(n\mu_p(r))!}{(\phi_{\mu_p}(r))^{n \mu_p(r)}}\\
 & \times \sum_{q_1,\ldots,q_n}\int_{(\mathbb{R}^4)^n}  \text{d}e_1\ldots \text{d}e_n \sqrt{\phi_{\mu}(q_n)}\prod_{i=2}^n \sqrt{\phi_{\mu_p}(q_i)} \tilde{G}^{\epsilon}_{R,\delta}(e_i) P_{\tau,M}(q_i + e_i - q_{i-1} - e_{i-1}) \sqrt{\phi_{\mu_p}(q_{i-1})}\\
 &\hspace{5 cm} \times  \tilde{G}^{\epsilon}_{R,\delta}(e_1)  \sqrt{\phi_{\mu_p}(q_1+e_1)}.
 \end{aligned}
\end{equation}
We can, again, represent the last line as an operator computation. 
Consider the following space of functions,
\begin{equation*}
\begin{aligned}
    & L^2_{G,R,\delta,\epsilon}:= \left\{f:  \epsilon^4 \sum_{q} \int_{\mathbb{R}^4} f^2(q,e) \tilde{G}^{\epsilon}_{R,\delta}(e) \text{d}e =1\right\},\\&
    \langle f_1, f_2 \rangle= \epsilon^4 \sum_{q} \int_{\mathbb{R}^4} f_1(q,e)f_2(q,e) \tilde{G}^{\epsilon}_{R,\delta}(e) \text{d} e. 
\end{aligned}
\end{equation*}
On this space, we consider the following operator,
\begin{equation*}
    T_{k,R,\delta,\epsilon} (f)(\tilde{q},\tilde{e})=\sqrt{k}(\tilde{q}) \sum_{q}\int_{\mathbb{R}^4} \text{d}e P_{\tau,M}(\tilde{q}+\tilde{e}-q-e) \tilde{G}^{\epsilon}_{R,\delta}(e)\sqrt{k}(q) .
\end{equation*}
This is a symmetric operator on our space $L^{2}_{G,R,\delta,\epsilon}$. 
We can rewrite the last line of \eqref{eq:upperbndexp} as,
\begin{equation*}
    [\max_{z} P_{\tau,M}] \frac{1}{n!} \prod_{r\in P_{\epsilon}} \frac{(n \mu_p(r))!}{(\phi_{\mu_p}(r))^{n \mu_p(r)}} \left[ \int_{\mathbb{R}^4} \text{d}e \tilde{G}^{\epsilon}_{R,\delta}(e) \right]\bigg\langle \sqrt{\frac{\phi_{\mu_p}}{\int_{\mathbb{R}^4} \text{d} e \tilde{G}^{\epsilon}_{R,\delta}(e)}}, T_{\phi_{\mu_p},R,\epsilon}^{n-1} \sqrt{\frac{\phi_{\mu_p}}{\int_{\mathbb{R}^4} \text{d} e \tilde{G}^{\epsilon}_{R,\delta}(e)}} \bigg \rangle.
\end{equation*}
We needed to introduce the normalization factor $\int_{\mathbb{R}^4} \text{d}e \tilde{G}^{\epsilon}_{R,\delta}(e)$ so that the inner product of the function $\sqrt{\frac{\phi_{\mu_p}}{\int_{\mathbb{R}^4} \text{d} e \tilde{G}^{\epsilon}_{R,\delta}(e)}}$ with itself has norm less than $1$. Observe that,
\begin{equation*}
    \sum_{q} \int_{\mathbb{R}^4}\left[\sqrt{\frac{\phi_{\mu_p}(q)}{\int_{\mathbb{R}^4} \text{d} e \tilde{G}^{\epsilon}_{R,\delta}(e)}}\right]^2 \tilde{G}^{\epsilon}_{R,\delta}(e)  \text{d} e 
    = \sum_{q} \phi_{\mu_p}(q) = \sum_{q}\sqrt{\mu_p(q)} \le \sum_{q} \mu_p(q) = 1.
\end{equation*}
As restriction of the domain to $R$ is needed in order to ensure that  $\int_{\mathbb{R}^4} \tilde{G}^{\epsilon}|_R(e) \text{d}e$ is finite. 
The inner product can be bounded as,
\begin{equation*}
\begin{aligned}
 & \left\langle \sqrt{\frac{\phi_{\mu_p}}{\int_{\mathbb{R}^4} \text{d} e \tilde{G}^{\epsilon}_{R,\delta}(e)}}, T_{\phi_{\mu_p},R,\epsilon}^{n-1} \sqrt{\frac{\phi_{\mu_p}}{\int_{\mathbb{R}^4} \text{d} e \tilde{G}^{\epsilon}_{R,\delta}(e)}}  \right\rangle\\
 &\le \left[\max_{\substack{ \sum_q k^2(q) = 1\\
 \sum_q \int_{\mathbb{R}^4} \text{d} e f^2(q,e) \tilde{G}^{\epsilon}_{R,\delta}(e) =1}} \sum_{\tilde{q},q}
 \int_{(\mathbb{R}^4)^2}  f(\tilde{q},\tilde{e}) \tilde{G}^{\epsilon}_{R,\delta}(\tilde{e}) P_{\tau,M}(\tilde{q} + \tilde{e} - q- e) \tilde{G}^{\epsilon}_{R,\delta}(e) f(q,e) \text{d} e \text{d} \tilde{e} \right]^{n-1}.
\end{aligned}
\end{equation*}
We denote the quantity in brackets above by $\rho_{M,R,\delta,\epsilon}$.

Returning to bounding $\mathbb{E}[(\mathcal{G}_{R,\delta})^{n}]$, we see that this is bounded by,
\begin{align*}
    &\frac{1}{(n!)^2}\mathbb{E}[(\mathcal{G}_{B,R})^n] \\
    \le&|\max_{z} P_{\tau,M}(z)|^2 \left[ \int_{\mathbb{R}^4} \tilde{G}^{\epsilon}_{R,\delta}(e)  \text{d} e  \right]^2 (\rho_{M,R,\delta,\epsilon})^{2n-2} \sum_{p_1,\ldots,p_n} \left(\frac{1}{n!} \prod_{r \in P_{\epsilon}} \frac{(n \mu_p(r))!}{(\phi_{\mu_p}(r))^{n \mu_p(r)}} \right)^2.
\end{align*}
Then, we see that,
\begin{equation*}
\limsup_{n \to \infty}\frac{1}{n} \log \frac{1}{(n!)^2} \mathbb{E}[(\mathcal{G}_{R,\delta})^n] \le 2 \log \rho_{M,R,\delta,\epsilon} + \frac{1}{n} \log \sum_{p_1,\ldots,p_n} \left(\frac{1}{n!} \prod_{r \in P_{\epsilon}} \frac{(n \mu_p(r))!}{(\phi_{\mu_p}(r))^{n \mu_p(r)}} \right)^2.
\end{equation*}
The latter term above can be shown to go to 0. If we note equation \cite[(3.1.11)]{Chenbook}, 
we see that an upper bound on the logarithmically scaled moments of $\mathcal{G}_{R,\delta}$ is bounded by $2 \rho_{M,R,\delta,\epsilon}$.
\end{proof}

\subsection{Analyzing $\rho_{M,R,\delta,\epsilon}$}




The goal of this section is to remove the dependence of $\epsilon$ and $M$ in the definition of the optimization $\rho_{M,R,\delta,\epsilon}$.  We will prove the following two Lemmas. The first will remove the dependence on $\epsilon$. The second will remove the dependence on $M$.

\begin{lem} \label{lem:removeepsilon}
Recall $\rho_{M,R,\delta,\epsilon}$ from Lemma \ref{lem:uprBRdeltaepsi}. 
As we remove the $\epsilon$ regularization, we argue that
\begin{equation*}
\limsup_{\epsilon \to 0} \rho_{M,R,\delta,\epsilon} \le \rho_{M,R,\delta}.
\end{equation*}
Here,
\begin{equation*}
\begin{aligned}
    \rho_{M,R,\delta}:= & \sup_{\substack{\sum_q k^2(q) =1\\
    \int_{(-\frac{M}{2},\frac{M}{2}]^4} \text{d}q\int_{\mathbb{R}^4} \text{d}e f^2(q,e) \tilde{G}_{R,\delta}(e) =1}} 
    \int_{((-\frac{M}{2},\frac{M}{2}]^4)^2}  \text{d}\tilde{q} \text{d}q \int_{(\mathbb{R}^4)^2} \text{d} \tilde{e} \text{d} e f(\tilde{q},\tilde{e}) \sqrt{k}(\tilde{q}) \tilde{G}_{R,\delta}(\tilde{e}) \\
    & \hspace{5cm} \times P_{\tau,M}(\tilde{q}+ \tilde{e} -q -e) \tilde{G}_{R,\delta}(e) \sqrt{k}(q) f(q,e).
\end{aligned}
\end{equation*}
\end{lem}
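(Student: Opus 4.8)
The plan is to show that the $\epsilon$-regularized optimization $\rho_{M,R,\delta,\epsilon}$ converges to $\rho_{M,R,\delta}$ as $\epsilon \to 0$, by identifying both quantities as (essentially) the top of the spectrum of a self-adjoint operator and showing that the operators converge in an appropriate sense. First I would rewrite $\rho_{M,R,\delta,\epsilon}$ as the operator norm of a symmetric integral operator $A_\epsilon$ acting on $L^2$ of the product space (indexed by the grid $P_\epsilon$ in the $q$-variable and $\mathbb{R}^4$ with weight $\tilde{G}^\epsilon_{R,\delta}(e)\,\mathrm{d}e$ in the $e$-variable), whose kernel is built from $P_{\tau,M}$, $\tilde{G}^\epsilon_{R,\delta}$, and the normalization $k$; the supremum over $k$ is then a supremum of operator norms. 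The comparison quantity $\rho_{M,R,\delta}$ is the analogous operator norm with $\tilde{G}^\epsilon_{R,\delta}$ replaced by $\tilde{G}_{R,\delta}$ and the grid in $q$ replaced by a continuous integral over $(-\tfrac{M}{2},\tfrac{M}{2}]^4$. Because $\tilde{G}_{R,\delta}$ is a bounded, compactly supported function with $\tilde{G}_{R,\delta} \le \tilde{G}^\epsilon_{R,\delta}$ pointwise and $\tilde{G}^\epsilon_{R,\delta}(e) = \sup_{|d|\le\epsilon}\tilde{G}_{R,\delta}(e+d) \downarrow \tilde{G}_{R,\delta}(e)$ for a.e.\ $e$ as $\epsilon \to 0$ (since $\tilde{G}$ is continuous away from the origin and the $\delta$-cutoff removes the singularity), and since $P_{\tau,M}$ is bounded, dominated convergence controls all the kernel entries.

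The key steps, in order: (1) For fixed $k$ (a probability density on the grid) embed the $\epsilon$-discretized $q$-sum into a continuous integral by associating to $k$ the piecewise-constant density $\bar{k}_\epsilon(q) = \epsilon^{-4}k(p)$ on each cell $p + Q_\epsilon$; this turns the $\sum_q$ into $\int_{(-M/2,M/2]^4}\mathrm{d}q$ at the cost of replacing $P_{\tau,M}(\tilde{q}+\tilde{e}-q-e)$ by a cell-averaged version, which by uniform continuity of $P_{\tau,M}$ on compacts differs from $P_{\tau,M}$ by $o(1)$ uniformly as $\epsilon \to 0$. (2) Bound $\tilde{G}^\epsilon_{R,\delta} \le \tilde{G}^{\epsilon_0}_{R,\delta}$ for all $\epsilon \le \epsilon_0$ to get a uniform domination, then use that $\int_{\mathbb{R}^4}\tilde{G}^{\epsilon_0}_{R,\delta}(e)\,\mathrm{d}e < \infty$ (because of the $R$-cutoff) so the operators $A_\epsilon$ are uniformly bounded with uniformly integrable kernels. (3) Take a near-optimal sequence $(k_\epsilon, f_\epsilon)$ for $\rho_{M,R,\delta,\epsilon}$, extend $f_\epsilon$ and $k_\epsilon$ to the continuous setting as above, and extract a weakly convergent subsequence; use the compactness coming from the bounded compact support in $e$ and the integrability of $\tilde{G}_{R,\delta}$ to pass to the limit in the quadratic form, producing an admissible pair for $\rho_{M,R,\delta}$ whose value is $\ge \limsup_\epsilon \rho_{M,R,\delta,\epsilon}$. (4) Handle the discrepancy between $\tilde{G}_{R,\delta}$ and $\tilde{G}^\epsilon_{R,\delta}$ in the normalization constraint by noting $\int \tilde{G}^\epsilon_{R,\delta} \to \int \tilde{G}_{R,\delta}$, so renormalizing $f_\epsilon$ costs only a $1+o(1)$ factor. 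Appendix \ref{sec:AppendixB} is cited in the statement as the place supplying the technical estimates on the modified kernels, so I would invoke those lemmas for the quantitative continuity bounds on $P_{\tau,M}$ and $\tilde{G}^\epsilon_{R,\delta}$.

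The main obstacle I expect is step (3): passing to the limit in the quartic-looking quadratic form $\iint f(\tilde q,\tilde e)\tilde G^\epsilon(\tilde e) P_{\tau,M}(\cdots)\tilde G^\epsilon(e) f(q,e)$ requires genuine compactness, not just weak convergence, because the form is not weakly continuous in $f$ in general. The resolution is that the operator $f \mapsto \int P_{\tau,M}(\tilde q+\tilde e - q - e)\tilde G^\epsilon_{R,\delta}(e) f(q,e)\,\mathrm{d}q\,\mathrm{d}e$ is Hilbert--Schmidt (its kernel, after absorbing the weights, is square-integrable thanks to the $R$- and $M$-cutoffs and the boundedness of $P_{\tau,M}$), hence compact, so weak convergence of $f_\epsilon$ does upgrade to norm convergence of its image, and the quadratic form converges. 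One must also check that the $\epsilon \to 0$ perturbation of the kernel is small in Hilbert--Schmidt norm, which again follows from dominated convergence using the uniform domination from step (2). Once compactness is in hand the rest is bookkeeping on the normalizations.
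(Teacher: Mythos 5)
Your proposal takes a genuinely different route from the paper, and it has gaps that the paper's route is designed precisely to avoid. The paper does \emph{not} run a weak-compactness argument on near-optimizers. Instead it proceeds deterministically: for a fixed small $\epsilon_0$ it establishes the pointwise kernel bound $\tilde{G}^\epsilon_{R,\delta}(z) \le f(\epsilon_0)\tilde{G}_{R,\delta}(z) + \mathbbm{1}[R\le|z|\le R+\epsilon_0]$ with $f(\epsilon_0)\to 1$, valid for all $\epsilon\le\epsilon_0$. Then it invokes Theorem \ref{thm:oanaleps} from Appendix \ref{sec:AppendixB} --- which is exactly the grid-to-continuum device you gesture at but don't use concretely --- applied with $M = f(\epsilon_0)\tilde G_{R,\delta}+\mathbbm{1}[R\le|\cdot|\le R+\epsilon_0]$, to replace the $\epsilon$-grid sums by continuous $\text{d}q$ integrals. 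Finally it disposes of the cross terms between the two pieces of the comparison kernel using the convolutional square root of $P_{\tau,M}$ (equation \eqref{eq:splitcross}, a Young-type split with a free parameter $L$), and bounds the pure indicator-annulus contribution by $[R+\epsilon_0]^4 - R^4 \to 0$. Taking $\epsilon_0\to 0$ and then $L\to 0$ finishes. No subsequence extraction or limit of optimizers is ever taken.

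The concrete problems with your plan: (i) you propose extracting weakly convergent subsequences of near-optimal $(k_\epsilon,f_\epsilon)$ and passing to the limit in the quadratic form, but the form depends on $\sqrt{k_\epsilon}(\tilde q)\sqrt{k_\epsilon}(q)$ and the square root is a nonlinear, non-weakly-continuous operation on $L^2$ --- oscillating $k_\epsilon$ with weak limit $k^*$ do \emph{not} give $\sqrt{k_\epsilon}\rightharpoonup\sqrt{k^*}$, so the kernel of $T_{k_\epsilon}$ does not converge in Hilbert--Schmidt norm to that of $T_{k^*}$; (ii) even granting some version of convergence, the weak limit $(k^*,f^*)$ may degenerate (e.g.\ $k^*\equiv 0$) without a separate tightness argument, which you do not supply, and a degenerate limit gives no useful lower bound on $\rho_{M,R,\delta}$; (iii) the discontinuity of $\tilde{G}_{R,\delta}$ at $|z|=R$ means $\tilde G^\epsilon_{R,\delta}$ does not converge to $\tilde G_{R,\delta}$ uniformly, and while it does converge a.e., the interaction with the normalization constraint and the quadratic form near that sphere is exactly the delicate piece; the paper's $\mathbbm{1}[R\le|z|\le R+\epsilon_0]$ correction term and the $L$-parameter split handle this explicitly, whereas your ``$1+o(1)$ renormalization'' is not justified. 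Your Hilbert--Schmidt compactness observation and the uniform domination $\tilde{G}^\epsilon_{R,\delta}\le\tilde{G}^{\epsilon_0}_{R,\delta}$ are both correct and useful, but they do not resolve (i) and (ii). The paper's deterministic-comparison strategy sidesteps all of these pitfalls.
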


\begin{lem} \label{lem:removM}
    Recall $\rho_{M,R,\delta}$ from Lemma \ref{lem:removeepsilon}.
    As we remove the $M$ compactification, we have,
    \begin{equation*}
        \limsup_{M \to \infty} \rho_{M,R,\delta} \le \rho.
    \end{equation*}
\end{lem}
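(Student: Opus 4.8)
The plan is to turn the optimization for $\rho_{M,R,\delta}$ into a translation--invariant quadratic form on $L^{2}(\mathbb{R}^4)$ with a compactly supported density, compare the folded transition kernel $P_{\tau,M}$ with $P_{\tau}$ itself, and show that the extra ``wrap--around'' mass disappears as $M\to\infty$ once one has recentered on the torus. First I would reformulate: in the functional defining $\rho_{M,R,\delta}$ substitute $y=q+e$ and $\tilde y=\tilde q+\tilde e$ and set
\begin{equation*}
\Psi_{f,k}(y):=\int_{(-\frac{M}{2},\frac{M}{2}]^{4}} f(q,y-q)\,\sqrt{k}(q)\,\tilde{G}_{R,\delta}(y-q)\,\mathrm dq ,
\end{equation*}
which is supported in the enlarged box $B':=(-\tfrac{M}{2}-R,\tfrac{M}{2}+R]^{4}$. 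Since the kernel $P_{\tau,M}(\tilde q+\tilde e-q-e)=P_{\tau,M}(\tilde y-y)$ does not see $q,\tilde q$, the $q$--integrals collapse and
\begin{equation*}
\rho_{M,R,\delta}=\sup_{f,k}\ \int_{(\mathbb{R}^4)^2}\Psi_{f,k}(\tilde y)\,P_{\tau,M}(\tilde y-y)\,\Psi_{f,k}(y)\,\mathrm d\tilde y\,\mathrm dy .
\end{equation*}
The identical substitution applied to $\rho$ (with $\tilde{G}$, $P_{\tau}$, and $(f,k)$ on all of $\mathbb{R}^4$) yields the analogous expression; moreover $\tilde{G}_{R,\delta}\le\tilde{G}$ pointwise, so replacing $f$ by $f\sqrt{\tilde{G}_{R,\delta}/\tilde{G}}$ in the $\rho$--functional only increases it, whence $\rho_{R,\delta}:=\sup_{f,k}\int\!\!\int\Psi_{f,k}(\tilde y)P_\tau(\tilde y-y)\Psi_{f,k}(y)\,\mathrm d\tilde y\,\mathrm dy\le\rho$. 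Because every kernel is nonnegative, a Perron--Frobenius argument lets me take $f,k\ge0$, hence $\Psi_{f,k}\ge0$, and a Cauchy--Schwarz estimate gives the $M$--uniform bound $\|\Psi_{f,k}\|_{L^2(\mathbb{R}^4)}^{2}\le\|k*\tilde{G}_{R,\delta}\|_{\infty}\le\|\tilde{G}_{R,\delta}\|_{L^{2}}=:C_{R,\delta}<\infty$.

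Next, from $(\sum_{l}a_{l}^{2})^{1/2}\le a_{0}+\sum_{l\ne0}a_{l}$ for $a_{l}\ge0$ one gets $P_{\tau,M}(z)-P_{\tau}(z)\le W_{M}(z):=\sum_{l\ne0}P_{\tau}(z+Ml)\ge0$, so that for any admissible pair, with $\Psi=\Psi_{f,k}$,
\begin{equation*}
\int\!\!\int\Psi(\tilde y)P_{\tau,M}(\tilde y-y)\Psi(y)\ \le\ \int\!\!\int\Psi(\tilde y)P_{\tau}(\tilde y-y)\Psi(y)\ +\ \int\!\!\int\Psi(\tilde y)W_{M}(\tilde y-y)\Psi(y)\ \le\ \rho\ +\ \int\!\!\int\Psi W_{M}\Psi .
\end{equation*}
So everything reduces to showing $\int\!\!\int\Psi W_{M}\Psi\to0$ as $M\to\infty$, uniformly over admissible $(f,k)$. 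By Schur's inequality and the evenness of $P_\tau$ (pairing $l$ with $-l$),
\begin{equation*}
\int\!\!\int\Psi W_{M}\Psi\ \le\ \sum_{l\ne0}\int_{B'}\Psi(\tilde y)^{2}\,(\mathbbm{1}_{B'}*P_{\tau})(\tilde y+Ml)\,\mathrm d\tilde y .
\end{equation*}
Since $P_{\tau}\in L^{1}$ with $\|P_{\tau}\|_{L^1}=1$, while $P_{\tau}(x)\le Ce^{-c|x|}$ for $|x|\ge1$ (the exponential killing), one has $(\mathbbm{1}_{B'}*P_{\tau})(w)\le1$ always and $(\mathbbm{1}_{B'}*P_{\tau})(w)\le Ce^{-c\,\mathrm{dist}(w,B')/2}$ once $\mathrm{dist}(w,B')\ge1$. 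For $|l|_{\infty}\ge2$ we have $\mathrm{dist}(\tilde y+Ml,B')\ge M-2R$ uniformly in $\tilde y\in B'$, so these terms contribute at most $Ce^{-cM}C_{R,\delta}$; for the finitely many $l$ with $|l|_{\infty}=1$ the factor exceeds a prescribed $\eta>0$ only when $\tilde y$ lies in a boundary slab of $B'$ of fixed width $\tau_{0}=\tau_{0}(\eta)$. Hence $\int\!\!\int\Psi W_{M}\Psi\le C\bigl(\sup_{\text{faces}}\int_{\{\mathrm{dist}(\tilde y,\text{face})\le\tau_{0}\}}\Psi^{2}\bigr)+C\eta\,C_{R,\delta}+Ce^{-cM}C_{R,\delta}$.

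To control the boundary--slab mass I would invoke the translation invariance: $\rho_{M,R,\delta}$ is unchanged when $k$ and the $q$--variable of $f$ are translated simultaneously on the torus $\mathbb{T}_{M}^{4}$. By Cauchy--Schwarz the slab mass of $\Psi_{f^{a},k^{a}}$ near a face is at most $\sup_{y}\bigl(\int_{B_{R}(y)\cap(-M/2,M/2]^{4}}(k^{a})^{2}\bigr)^{1/2}\|\tilde{G}_{R,\delta}\|_{L^{2}}$; averaging $\int_{\{\text{slab of width }\tau_{0}+3R\}}(k^{a})^{2}$ over $a\in\mathbb{T}_{M}^{4}$ gives average $(\tau_{0}+3R)/M$ per face, so by pigeonhole there is a translate $(f^{a^{*}},k^{a^{*}})$, with the same value of the functional, for which every boundary--slab mass is $O_{\eta,R}(M^{-1})$, hence $\sup_{\text{faces}}\int_{\text{slab}}\Psi_{f^{a^{*}},k^{a^{*}}}^{2}=O_{\eta,R}(M^{-1/2})$. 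Applying the previous two paragraphs to this translate of a near--optimizer of $\rho_{M,R,\delta}$ gives $\rho_{M,R,\delta}\le\rho+O_{\eta,R}(M^{-1/2})+C\eta\,C_{R,\delta}+Ce^{-cM}C_{R,\delta}$; letting $M\to\infty$ and then $\eta\to0$ yields $\limsup_{M\to\infty}\rho_{M,R,\delta}\le\rho$.

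The main obstacle is the third paragraph: because $P_{\tau}$ is singular at the origin, $W_{M}$ is not pointwise small near the faces of the box, so a crude estimate fails; one must combine $P_{\tau}\in L^{1}$, the exponential decay of $P_{\tau}$ at infinity, the $M$--uniform $L^{2}$ bound on $\Psi_{f,k}$, the torus translation invariance, and a marginal/pigeonhole argument to sweep all the mass away from the boundary layer. This is precisely where the estimates of Appendix~\ref{sec:AppendixB} on the modified (compactified, discretized) kernels enter, as anticipated at the end of the introduction.
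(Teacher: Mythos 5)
The paper omits this proof entirely, deferring to Chen's \cite[Lemma 3.2.4]{Chenbook}; your proposal supplies a self-contained argument that is, in spirit, exactly the translation/unfolding argument Chen uses, so I regard it as ``the same approach.'' The argument is correct. The key steps --- rewriting the quadratic form through $\Psi_{f,k}(y)=\int f(q,y-q)\sqrt{k}(q)\tilde G_{R,\delta}(y-q)\,\mathrm dq$, the pointwise domination $P_{\tau,M}\le P_\tau+W_M$ from $\sqrt{\sum a_l^2}\le a_0+\sum_{l\ne 0}a_l$, the Schur/AM--GM reduction of $\int\!\int\Psi W_M\Psi$ to a weighted $L^2$ integral of $\Psi^2$, and the pigeonhole over torus translates to choose a near-optimizer with small boundary-slab mass of $k^2$ (hence of $\Psi^2$) --- all go through, and I verified the $M$-uniform bound $\|\Psi_{f,k}\|_{L^2}^2\le\|\tilde G_{R,\delta}\|_{L^2}$ and the comparison $\sup\int\!\int\Psi P_\tau\Psi\le\rho$ via the substitution $\hat f=f\sqrt{\tilde G_{R,\delta}/\tilde G}$ (using $\sqrt{\tilde G_{R,\delta}\tilde G}\ge\tilde G_{R,\delta}$ together with $f,k\ge0$).

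Three small remarks, none of which affect correctness. First, your closing sentence attributes the technical input to Appendix~\ref{sec:AppendixB}; in the paper that appendix is used for Lemma~\ref{lem:removeepsilon} (removing the $\epsilon$-discretization), not for removing $M$ --- the present lemma is instead delegated to Chen's book, so the ``anticipated'' attribution is slightly off. Second, the exact slab widths ($\tau_0+3R$, $M-2R$, etc.) should be double-checked; e.g.\ for $|l|_\infty=1$ the relevant $q$-slab has width more like $\tau_0+2R$, but the asymptotics in $M$ are unaffected. Third, when estimating $\sum_{|l|_\infty\ge2}$ one should note the polynomial entropy factor (the number of $l$ with $|l|_\infty=k$ grows like $k^3$), which is dominated by the exponential decay $e^{-c(M(k-1)-2R)}$; this is implicit in your $Ce^{-cM}$ but worth spelling out.
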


These two lemmas are now enough to prove Lemma \ref{lem:uprbnd}.
\begin{proof} [Proof of Lemma \ref{lem:uprbnd}]
We have that from Lemma \ref{lem:uprBRdeltaepsi} that the  asymptotic moments of $\mathcal{G}_{B,R}$ are bounded by $\rho_{M,R,\delta,\epsilon}$ for any arbitrary choice of $M$ and $\epsilon$. By using Lemmas \ref{lem:removeepsilon} and \ref{lem:removM} we can take the limit as $\epsilon \to 0$ and $\delta \to 0$ in order to deduce that the asymptotic moments of $\rho_{M,R,\delta,\epsilon}$ can be bounded by $\rho$, as desired.
\end{proof}

Now we can turn to the proofs of Lemmas \ref{lem:removeepsilon} and \ref{lem:removM}.

\begin{proof}[Proof of  Lemma \ref{lem:removeepsilon}]
Note that
$\rho_{M,R,\delta,\epsilon}$ corresponds to the maximization problem,
\begin{equation*}
\epsilon^{2d} \sum_{z_1,z_2 \in P_{\epsilon}} \int_{(\mathbb{R}^4)^2 } \text{d}e_1 \text{d}e_2 f(z_1,e_1) \sqrt{k(z_1)} G_{R,\delta}^{\epsilon}(e_1) P_{\tau,M}(z_1 + e_1 -z_2 -e_2)  G_{R,\delta}^{\epsilon}(e_2) \sqrt{k(z_2)} f(z_2,e_2). 
\end{equation*}
Fix some $\epsilon_0$, for $\epsilon \le \epsilon$, we can find some function $f(\epsilon_0)$ such that $f(\epsilon_0) \to 1$ as $\epsilon_0 \to 0$. Furthermore,
$G_{R,\delta}^{\epsilon}(z) \le f(\epsilon_0) G_{R,\delta}(z) + \mathbbm{1}[R \le |z| \le R + \epsilon_0]$.
Notice that once we fix $\epsilon_0$, we can apply Theorem \ref{thm:oanaleps} to the function $f(\epsilon_0)G_{R,\delta}(z) +\mathbbm{1}[R \le |z| \le R + \epsilon_0] $ and show that, 
\begin{equation} \label{eq:someupper}
\begin{aligned}
& \lim_{\epsilon \to 0}\sup_{f,k} \epsilon^{2d} \sum_{z_1,z_2 \in P_{\epsilon}} \int_{(\mathbb{R}^4)^2 } \text{d}e_1 \text{d}e_2 f(z_1,e_1) \sqrt{k(z_1)} G_{R,\delta}^{\epsilon}(e_1) P_{\tau,M}(z_1 + e_1 -z_2 -e_2)  G_{R,\delta}^{\epsilon} \sqrt{k(z_2)} f(z_2,e_2)\\
&\le \sup_{f,k} \int_{([-M,M]^4)^2} \text{d}z_1 \text{d}z_2 \int_{(\mathbb{R}^4)^2} \text{d}e_1 \text{d}e_2 f(z_1,e_1) \sqrt{k(z_1)} [f(\epsilon_0) G_{R,\delta}(e_1) + \mathbbm{1}[R \le |e_1| \le R + \epsilon_0]]\\
& \times P_{\tau,M}(z_1 + e_1 -z_2 -e_2) [f(\epsilon_0) G_{R,\delta}(e_2) + \mathbbm{1}[R \le |e_2| \le R + \epsilon_0]] \sqrt{k(z_2)} f(z_2,e_2).
\end{aligned}
\end{equation}

Now, we assert that in general, we have for any $L>0$ and positive functions, $f,M_1$ and $M_2$ that
\begin{equation} \label{eq:splitcross}
\begin{aligned}
&\int_{([-M,M]^4)^2} \text{d}z_1 \text{d}z_2 \int_{(\mathbb{R}^4)^2} \text{d}e_1 \text{d}e_2 k(z_1,e_1) M_1(e_1) P_{\tau,M}(z_1 +e_1 - z_2 -e_2) k(z_2,e_2) M_2(e_2) \\
&\le
L \int_{([-M,M]^4)^2} \text{d}z_1 \text{d}z_2 \int_{(\mathbb{R}^4)^2} \text{d}e_1 \text{d}e_2 k(z_1,e_1) M_1(e_1) P_{\tau,M}(z_1 +e_1 - z_2 -e_2) k(z_2,e_2) M_1(e_2) \\
& + \frac{1}{L} \int_{([-M,M]^4)^2} \text{d}z_1 \text{d}z_2 \int_{(\mathbb{R}^4)^2} \text{d}e_1 \text{d}e_2 k(z_1,e_1) M_2(e_1) P_{\tau,M}(z_1 +e_1 - z_2 -e_2) k(z_2,e_2) M_2(e_2). 
\end{aligned}
\end{equation}
To see this, we introduce the convolutional square root of $P_{\tau,M}(z_1 +e_1 - z_2 - e_2) = \int_{[-M,M]^4} \text{d}k \tilde{P}_{\tau,M}(z_1 + e_1 - k) \tilde{P}_{\tau,M}(k- z_2 -e_2) $. Observe that $\tilde{P}_{\tau,M}(y) = \tilde{P}_{\tau,M}(-y)$ by symmetry. 
Thus, we have that
\begin{equation*}
\begin{aligned}
&\int_{([-M,M]^4)^2} \text{d}z_1 \text{d}z_2 \int_{(\mathbb{R}^4)^2} \text{d}e_1 \text{d}e_2 k(z_1,e_1) M_1(e_1) P_{\tau,M}(z_1 +e_1 - z_2 -e_2) k(z_2,e_2) M_2(e_2)\\
&= \int_{[-M,M]^4} \text{d} k \left[\int_{[-M,M]^4} \text{d}z_1  \int_{\mathbb{R}^4} \text{d}e_1  k(z_1,e_1) M_1(e_1) \tilde{P}_{\tau,M}(z_1 +e_1-k) \right]\\
& \times \left[\int_{[-M,M]^4} \text{d}z_2  \int_{\mathbb{R}^4} \text{d}e_2  k(z_2,e_2) M_2(e_2) \tilde{P}_{\tau,M}(z_2 +e_2-k) \right]
\end{aligned}
\end{equation*}
and it is bounded by
\begin{equation*}
\begin{aligned}
&  L \int_{[-M,M]^4} \text{d} k \left[\int_{[-M,M]^4} \text{d}z_1  \int_{\mathbb{R}^4} \text{d}e_1  k(z_1,e_1) M_1(e_1) \tilde{P}_{\tau,M}(z_1 +e_1-k) \right]^2\\
&+ \frac{1}{L}\int_{[-M,M]^4} \text{d} k \left[\int_{[-M,M]^4} \text{d}z_2  \int_{\mathbb{R}^4} \text{d}e_2  k(z_2,e_2) M_2(e_2) \tilde{P}_{\tau,M}(z_2 +e_2-k) \right]^2\\
& =  L \int_{([-M,M]^4)^2} \text{d}z_1 \text{d}z_2 \int_{(\mathbb{R}^4)^2} \text{d}e_1 \text{d}e_2 k(z_1,e_1) M_1(e_1) P_{\tau,M}(z_1 +e_1 - z_2 -e_2) k(z_2,e_2) M_1(e_2) \\
& + \frac{1}{L} \int_{([-M,M]^4)^2} \text{d}z_1 \text{d}z_2 \int_{(\mathbb{R}^4)^2} \text{d}e_1 \text{d}e_2 k(z_1,e_1) M_2(e_1) P_{\tau,M}(z_1 +e_1 - z_2 -e_2) k(z_2,e_2) M_2(e_2). 
\end{aligned}
\end{equation*}
Applying equation \eqref{eq:splitcross} to the last line of \eqref{eq:someupper}, we can bound the last line by,
$$
\begin{aligned}
&[f(\epsilon_0)^2 + L] \int_{(\mathbb{R}^4)^2} \text{d}z_1 \text{d}z_2 \int_{(\mathbb{R}^4)^2 } \text{d}e_1 \text{d}e_2 f(z_1,e_1) \sqrt{k(z_1)} G_{R,\delta}(e_1) P_{\tau,M}(z_1 + e_1 -z_2 -e_2)  \\
& \times G_{R,\delta}(e_2) \sqrt{k(z_2)} f(z_2,e_2) 
\\&+[L^{-1} + 1]\int_{(\mathbb{R}^4)^2} \text{d}z_1 \text{d}z_2 \int_{(\mathbb{R}^4)^2 } \text{d}e_1 \text{d}e_2 f(z_1,e_1) \sqrt{k(z_1)} \mathbbm{1}[R \le |e_1| \le R + \epsilon_0] \\
& \times P_{\tau,M}(z_1 + e_1 -z_2 -e_2)  \mathbbm{1}[R \le |e_1| \le R + \epsilon_0] \sqrt{k(z_2)} f(z_2,e_2). 
\end{aligned}
$$
The final term on the last line below can be bounded from above by  
$\sup_z  \int_{\mathbb{R}^4} \mathbbm{1}[R\le |z-y| \le R+ \epsilon_0] \mathbbm{1}[R \le |y| \le R+\epsilon_0] \text{d}y \le [R+\epsilon_0]^4 - R^4$. This is a consequence of the lower bound from Section \ref{sec:lwrbndbrown}. 
If we now first take $\epsilon_0 \to 0$ and then finally $L \to 0$, this gives us our desired conclusion from Lemma \ref{lem:removeepsilon}.
\end{proof}

Now, we turn a sketch of the proof of Lemma \ref{lem:removM}.
\begin{proof}[Proof of  Lemma \ref{lem:removM}]
We omit the proof since the proof is very similar to that to  \cite[Lemma 3.2.4]{Chenbook}.   
\end{proof}

\section{The relationship between $\rho$ and the modified Gagliardo-Nirenberg constant}\label{sec:rhoGagnir}

The goal of this section is to show that constant $\rho$ which shown determines the large deviation behavior of $\mathcal{G}$ can be more simply represented as a constant that occurs more naturally in analysis. Namely, the modified Gagliardo-Nirenberg constant as in \cite[Equation (6)]{FY15}. 

Before we present our main theorem, we discuss some notation. 
Recall that we let $p_t(x)$ be the transition density for a Brownian motion to reach point $x$ at time $t$ and $G(x)=\int p_t(x) dt$, $\tilde{G}$ be the convolutional square root of $G$, so that $\tilde{G}*\tilde{G}=G$ and $P_{\tau}(x)=\int e^{-t}p_t(x) dt$.  
\begin{prop} \label{prop:GagNir}
Recall the optimization problem:
\begin{equation*}
\rho:= \sup_{\substack{f \in L^2_G \\ k: \int_{\mathbb{R}^4} k^2(z) \text{d}z =1}} 
\int_{(\mathbb{R}^4)^4} f(\tilde{z},\tilde{e}) \sqrt{k}(\tilde{z}) \tilde{G}(\tilde{e})P_{\tau} (\tilde{z}+\tilde{e}-z-e) \tilde{G}(e) \sqrt{k}(z) f(z,e) \text{d}\tilde{z} \text{d} \tilde{e} \text{d}z \text{d}e.
\end{equation*}
Let $\tilde{\kappa}(4,2)$ be the optimal constant in the modified Gagliardo-Nirenberg inequality. 
Namely, the best constant such that,
\begin{equation*}
\left(\int_{(\mathbb{R}^4)^2} g^2(x) G(x-y) g^2(y) \text{d}x \text{d}y \right)^{1/4} \le \tilde{\kappa}(4,2) ||g||^{1/2}_{L^2} ||\nabla g||^{1/2}_{L^2}.
\end{equation*}
Then, 
\begin{equation*}
\rho=\frac{\tilde{\kappa}^2(4,2)}{\sqrt{2}}. 
\end{equation*}
\end{prop}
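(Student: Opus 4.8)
The plan is to identify the variational quantity $\rho$ with a Fourier/spectral expression and then recognize that expression as (a power of) the optimal constant in the modified Gagliardo--Nirenberg inequality \eqref{GNconst}. First I would rewrite $\rho$ by introducing the convolutional square root $\tilde P_\tau$ of $P_\tau$ (recall $P_\tau = \tilde P_\tau * \tilde P_\tau$, which exists and is positive since $\widehat{P_\tau}(\xi) = (1+|\xi|^2/2)^{-1}\ge 0$). Substituting $P_\tau(\tilde z+\tilde e - z - e) = \int \tilde P_\tau(\tilde z + \tilde e - w)\tilde P_\tau(w - z - e)\,\mathrm{d}w$ lets one write the quadratic form in $f$ as a genuine square: setting
\begin{equation*}
\Psi_{f,k}(w) := \int_{(\mathbb{R}^4)^2} f(z,e)\sqrt{k}(z)\tilde G(e)\tilde P_\tau(w - z - e)\,\mathrm{d}z\,\mathrm{d}e,
\end{equation*}
the functional becomes $\int_{\mathbb{R}^4}\Psi_{f,k}(w)^2\,\mathrm{d}w = \|\Psi_{f,k}\|_{L^2}^2$, to be maximized over the constraints $\|k\|_{L^2}=1$ and $\int f^2(z,e)\tilde G(e) = 1$. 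So $\rho$ is the operator norm squared of the map $(z,e)\mapsto w$ sending $f\sqrt{\tilde G}$ (an $L^2(\mathrm{d}z\,\mathrm{d}e)$ function after the substitution $f\mapsto f\sqrt{\tilde G}$) to $\Psi$, with the extra nuisance of the $\sqrt{k}$ weight and the freedom to optimize over $k$.

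The next step is to perform the inner optimization over $f$ for fixed $k$: since the map is linear in $f$, $\sup_f \|\Psi_{f,k}\|_{L^2}^2$ is the top of the spectrum of the self-adjoint operator $T_k$ appearing in \eqref{eq:symmop}, and by the usual argument (the kernel is a nonnegative function, so by a Perron--Frobenius/Jentzsch-type consideration the optimizer is nonnegative) one expects the extremal $f$ to factor, $f(z,e) = \sqrt{k}(z)\,\phi(z+e)$ for some profile, or more precisely that the problem collapses onto functions of the single variable $x = z+e$. Carrying the substitution $x = z+e$ through, and then optimizing over $k$ as well, I expect the form to reduce to
\begin{equation*}
\rho = \sup_{\|\psi\|_{L^2}=1}\ \Big\langle \psi,\ \tilde G * \big( P_\tau\,(\tilde G * \psi)\big)\Big\rangle^{1/2}
\end{equation*}
or an equivalent convolution expression; concretely, on the Fourier side the building block is $\widehat{\tilde G}(\xi)^2 = \widehat G(\xi) \propto |\xi|^{-2}$ and $\widehat{P_\tau}(\xi) = (1+|\xi|^2/2)^{-1}$. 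The point is that this is exactly the extremal problem governing the ground-state energy of $-\tfrac12\Delta + V$ with $V = \psi^2 * G$-type Hartree nonlinearity, i.e.\ the functional $J(g) = \big[\int g^2(x)G(x-y)g^2(y)\big]^{1/2} - \tfrac12\|\nabla g\|_{L^2}^2$ under $\|g\|_{L^2}=1$, whose value $M(\theta)$ was recorded in the introduction.

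Finally I would close the identification by the standard scaling/Legendre-transform bookkeeping relating $\rho$, the function $M(\theta)$ above, and $\tilde\kappa(4,2)$. By Gagliardo--Nirenberg \eqref{GNconst}, $\big[\int g^2 G g^2\big]^{1/2}\le \tilde\kappa^2(4,2)\|g\|_{L^2}\|\nabla g\|_{L^2}$, so under $\|g\|_{L^2}=1$ one has $J(g)\le \tilde\kappa^2 \|\nabla g\|_{L^2} - \tfrac12\|\nabla g\|_{L^2}^2 \le \tfrac12 \tilde\kappa^4(4,2)$, with equality approached along an optimizing sequence for the inequality rescaled to make $\|\nabla g\|_{L^2} = \tilde\kappa^2$; hence $\sup_g J(g) = \tfrac12\tilde\kappa^4(4,2)$. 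Matching this with the variational description of $\rho$ obtained above — where the same Fourier data $\widehat G$ and $\widehat{P_\tau}$ appear, the $P_\tau$ encoding precisely the resolvent $(-\tfrac12\Delta + 1)^{-1}$ of the killed motion, i.e.\ the Feynman--Kac/Laplace-transform incarnation of the kinetic term $\tfrac12\|\nabla g\|_{L^2}^2$ — yields $\rho = \tilde\kappa^2(4,2)/\sqrt 2$ after tracking the constant $\sqrt 2$ coming from the $\tfrac12$ in the Laplacian (equivalently, from $\widehat{P_\tau}(\xi)=(1+|\xi|^2/2)^{-1}$ versus the normalization in \eqref{GNconst}).

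The main obstacle I anticipate is the middle step: rigorously showing that the joint supremum over $(f,k)$ — two functions of the separate variables with the awkward mixed normalization $\int f^2\tilde G = 1$ and $\|k\|_2 = 1$ — genuinely reduces to the single-profile Hartree variational problem in the variable $x = z+e$, and that no loss occurs in this reduction. This requires an exchange-of-suprema / optimality argument (likely: guess the extremal $f = \sqrt{k}\cdot(\text{profile})$, verify it attains the operator norm via the nonnegativity of the kernel, then optimize over $k$ and recognize $\sqrt{k}$ as $g^2/\|g\|$ in disguise), plus care that the convolutional square root $\tilde P_\tau$ and the resulting $L^2$ manipulations are justified despite the mild singularities of $\tilde G\propto |x|^{-3}$ and $G\propto|x|^{-2}$ in $d=4$.
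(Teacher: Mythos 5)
Your high-level picture is right, and you even correctly guess the factorized ansatz $f(z,e) = \sqrt{k}(z)\,h(z+e)$ that the paper uses (together with $k(x) = \int h^2(y)\tilde{G}(x-y)\,\mathrm{d}y$). But the step you flag as "the main obstacle" is in fact a genuine gap that the paper does \emph{not} attempt to fill: the paper never proves that the joint supremum over $(f,k)$ is attained at a factorized profile. Instead the factorized substitution is used only as a test function to get a one-sided bound $\rho \geq \sup_h \int J(x) P_\tau(x-y) J(y)\,\mathrm{d}x\,\mathrm{d}y$ (Part 1), and the opposite inequality is obtained by an entirely different device: given arbitrary $(f,k)$, one forms the projection $F(\lambda) = \int f(\lambda-e,e)\sqrt{k}(\lambda-e)\tilde{G}(e)\,\mathrm{d}e$ and runs a chain of Cauchy--Schwarz inequalities to show $\langle F, P_\tau F\rangle \leq c_0^{-1}$ where $c_0 = \inf\{\int f^2 + \tfrac12\int|\nabla f|^2 : \int f^2 G f^2 = 1\}$. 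So the reduction to the single-variable Hartree problem is achieved by two independent inequalities, not by a Perron--Frobenius/factorization argument for the joint optimizer, and it is not clear that such a factorization argument can be made rigorous.

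A second gap: your final paragraph computes $\sup_g J(g) = M(1) = \tfrac12\tilde{\kappa}^4(4,2)$ and then "tracks the constant $\sqrt{2}$," but what you actually need is that $\rho^{-1}$ is precisely the value of $\theta$ at which $M(\theta) = 1$ (equivalently $\rho = c_0^{-1}$), since $M(\theta) = \tfrac{\theta^2\tilde{\kappa}^4}{2}$ and setting $M(\rho^{-1}) = 1$ gives $\rho = \tilde{\kappa}^2/\sqrt{2}$. Establishing $M(\rho^{-1}) \le 1$ requires the Lagrange-multiplier/resolvent computation using $P_\tau = I + \tfrac12\Delta\circ P_\tau$, and $M(\rho^{-1}) \ge 1$ requires the $c_0$-based upper bound on $\rho$; neither direction follows merely from the GN inequality applied at $\theta = 1$. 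So the bookkeeping you defer to at the end is actually the entire content of the two-sided argument in the paper.
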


\begin{proof}
\textit{Part 1: Showing $\rho \ge \frac{\tilde{\kappa}^2(4,2)}{\sqrt{2}}$}



First, we show that $\rho$ is greater than the value of a certain optimization problem, which can more readily be shown to related to $\tilde{\kappa}(4,2)$. 
Let $h$ be a function such that,
\begin{equation*}
\int_{(\mathbb{R}^4)^2}h^2(x) G(x-y) h^2(y) \text{d}x \text{d}y =1. 
\end{equation*}
Substitute $k(x) = \int_{\mathbb{R}^4} h^2(y) \tilde{G}(x-y) \text{d}y$ and we find $f(x,e) = h(x+e) \sqrt{\int_{\mathbb{R}^4} h^2(y) \tilde{G}(x-y) \text{d}y } $. 
Indeed, 
$$
\begin{aligned}
\int_{\mathbb{R}^4} k^2(x) \text{d}x &= \int_{(\mathbb{R}^4)^3}g^2(z_1) \tilde{G}(x-z_1)  \tilde{G}(z_2 -x) g^2(z_2) \text{d}x \text{d}z_1 \text{d}z_2\\
&= \int_{(\mathbb{R}^4)^2} g^2(z_1) G(z_2 -z_1) g^2(z_2) \text{d}z_1 \text{d}z_2 =1. 
\end{aligned}
$$
In addition,
$$
\begin{aligned}
 \int_{(\mathbb{R}^4)^2} f^2(x,e) \tilde{G}(e) \text{d}x \text{d}e 
&= \int_{(\mathbb{R}^4)^3} h^2(x+e) \tilde{G}(e) \tilde{G}(x-y) h^2(y) \text{d}x \text{d}y \text{d}e \\ 
&
=\int_{(\mathbb{R}^4)^2} \text{d}x \text{d}y h^2(x) h^2(y) G(x-y) =1. 
\end{aligned}
$$
If we let $J(x)=h(x) \int_{\mathbb{R}^4} h^2(x+\psi)G (\psi) \text{d}\psi$, then 
\begin{equation}\label{eq:optimizationprobleminequ}
\rho \ge \sup
\int_{(\mathbb{R}^4)^2} J(x) P_\tau (x-y) J(y) \text{d}x \text{d}y, 
\end{equation}
where the supremum is taken over all functions satisfying 
\begin{equation*}
\int_{(\mathbb{R}^4)^2} h(x)^2 G (x-y) h(y)^2 \text{d}x \text{d}y =1. 
\end{equation*}
Let 
\begin{equation*}
M(\theta)
=\sup_{\substack{g: \int_{\mathbb{R}^4} g^2 \text{d}x =1
\\ \int_{\mathbb{R}^4} |\nabla g|^2  \text{d}x< \infty}} \theta \left(  \int_{(\mathbb{R}^4)^2} g^2(x) G (x-y) g^2(y)  \text{d}x \text{d}y \right)^{1/2} 
- \frac{1}{2}\int_{\mathbb{R}^4}  |\nabla g|^2 \text{d}z. 
\end{equation*}
Then, 
\begin{equation*}
M\left(\frac{1}{\rho}\right)
=\sup_{\substack{ g: \int_{\mathbb{R}^4} g^2 \text{d}x =1
\\ \int_{\mathbb{R}^4} |\nabla g|^2  \text{d}x< \infty}} \frac{1}{\rho} \left(   \int_{(\mathbb{R}^4)^2} g^2(x) G (x-y) g^2(y) \text{d}x \text{d}y  \right)^{1/2} 
- \frac{1}{2}\int_{\mathbb{R}^4}  |\nabla g|^2 \text{d}z. 
\end{equation*}
Our two intermediate goals are to first show that $M(\rho^{-1}) = \frac{\tilde{\kappa}^4(4,2)}{2\rho^2}$ and secondly to show that $M(\rho^{-1}) \le 1.$ Together, these imply $\rho \ge \frac{\tilde{\kappa}^2(4,2)}{\sqrt{2}}$. 
We can first check that, by the modified Gagliardo-Nirenberg inequality, that, for any function $g$ with $\int_{\mathbb{R}^4} g^2 \text{d}x =1$, we have 
\begin{equation} \label{eq:maxirho}
\begin{aligned}
& \frac{1}{\rho} \left(  \int_{(\mathbb{R}^4)^2} g^2(x) G (x-y) g^2(y) \text{d}x \text{d}y \right)^{1/2} -\frac{1}{2} \int_{\mathbb{R}^4}  |\nabla g|^2 \text{d}z\\
& \le \frac{\tilde{\kappa}^2(4,2)}{\rho}  \left[\int_{\mathbb{R}^4}  |\nabla g|^2\text{d} z \right]^{1/2} -\frac{1}{2} \int_{\mathbb{R}^4}  |\nabla g|^2 \text{d} z \\
&\le \frac{\tilde{\kappa}^4(4,2)}{2\rho^2} + \frac{1}{2}\int_{\mathbb{R}^4}  |\nabla g|^2\text{d} z -\frac{1}{2} \int_{\mathbb{R}^4}  |\nabla g|^2 \text{d} z  = \frac{\tilde{\kappa}^4(4,2)}{2\rho^2}.
\end{aligned}
\end{equation}

Now we show that there is a function $f$  such that the supremum $\frac{\tilde{\kappa}^4(4,2)}{2\rho^2}$ is actually attained. 
From \cite[Theorem 2.2]{FY15}, we know that there is a function $\tilde{f}$ that satisfies the equality conditions in the modified Gagliardo-Nirenberg inequality such that its $L^2$ norm is $1$. 
Consider the rescaled version $f^{\lambda} = \lambda^{2} \tilde{f}(\lambda x)$. 
This transformation preserves the $L^2$ norm while $||\nabla f^{\lambda}||_{L^2} = \lambda ||\nabla \tilde{f}||_{L^2}$. 
One can also check that,
$$
\left[\int_{(\mathbb{R}^4)^2} (f^{\lambda}(x))^2 G(x-y) (f^{\lambda}(y))^2 \text{d}x \text{d}y \right]^{1/4} 
= \lambda^{1/2}\left[\int_{(\mathbb{R}^4)^2} (\tilde{f})^2 G(x-y) (\tilde{f})^2 \text{d}x \text{d}y \right]^{1/4}.
$$
By appropriately tuning $\lambda$, one can check that all inequalities in \eqref{eq:maxirho} become equalities and the maximum is attained. Let $f$ be denote the function at which this supremum is attained.  This proves the equality $M(\rho^{-1})$ with $\frac{\tilde{\kappa}^4(4,2)}{2\rho}$.

Now, we turn to showing that $M(\rho^{-1}) \le 1$. 
This involves manipulating the function $f$ at which the maximum is attained carefully though the use of Lagrange multipliers. 
By the Lagrange multiplier condition for the supremum of $M(\rho^{-1})$, 
\begin{equation*}
\frac{1}{\rho} \frac{f(x) \int_{\mathbb{R}^4} G (x-y) f(y)^2 \text{d}y}
{[\int_{(\mathbb{R}^4)^2} f^2(x) G (x-y) f^2(y) \text{d}x \text{d}y]^{1/2}}
+\frac{1}{2} \Delta f(x)
=M(\rho^{-1}) f(x). 
\end{equation*}
Let 
\begin{equation*}
\overline{f}
= \frac{f}{[\int_{(\mathbb{R}^4)^2} f^2(x) G (x-y) f^2(y) \text{d}x \text{d}y]^{1/4}}. 
\end{equation*}
Then, we obtain
\begin{equation*}
\frac{1}{\rho} \overline{f}(x) \int_{\mathbb{R}^4} G (x-y) \overline{f}^2(y) \text{d}y
+\frac{1}{2} \Delta \overline{f}(x)
=M(\rho^{-1}) \overline{f}(x), 
\end{equation*}
where the normalization is set by
\begin{equation*}
\int_{(\mathbb{R}^4)^2} \overline{f}^2(x)  G (x-y) \overline{f}^2(y) \text{d}x\text{d}y 
=1.
\end{equation*}
Let $W(x)=\overline{f}(x) \int_{\mathbb{R}^4} \overline{f}^2(y)G(x-y) \text{d}y$. 
Then, 
\begin{equation*}
\int_{\mathbb{R}^4} \frac{1}{\rho} W(x) P_{\tau}W(x) \text{d}x
+\int_{\mathbb{R}^4} \frac{1}{2} \Delta \overline{f}(x)P_{\tau}W(x) \text{d}x
= \int_{\mathbb{R}^4} M(\rho^{-1}) \overline{f}(x)P_{\tau}W(x) \text{d}x, 
\end{equation*}
where $P_{\tau}W(x)=\int_{\mathbb{R}^4} P_{\tau}(y)W(x-y)\text{d}y$. 
Since $\int_{\mathbb{R}^4} W(x) P_{\tau}W(x) \text{d}x \le \rho$  by the optimization problem inequality \eqref{eq:optimizationprobleminequ}, 
\begin{equation*}
1 +\int_{\mathbb{R}^4} \frac{1}{2} \Delta \overline{f}(x)P_{\tau}W(x) \text{d}x
\ge M(\rho^{-1})\int_{\mathbb{R}^4}  \overline{f}(x)P_{\tau}W(x) \text{d}x. 
\end{equation*}
Then, since $P_{\tau}=I+2^{-1}\Delta \circ P_{\tau}$ and $\int_{\mathbb{R}^4} \overline{f}(x)W(x)\text{d}x=1$ by the normalization condition on $\overline{f}$,
\begin{align*}
\frac{1}{2}\int_{\mathbb{R}^4} \Delta \overline{f}(x)P_{\tau}W(x) \text{d}x
=& \frac{1}{2}\int_{\mathbb{R}^4} \overline{f} (x) \Delta P_{\tau}W(x)\text{d}x \\
=& -\int_{\mathbb{R}^4} \overline{f}(x)W(x)dx + \int_{\mathbb{R}^4} \overline{f}(x) P_{\tau}W(x) \text{d}x\\
=&-1+\int_{\mathbb{R}^4} \overline{f}(x) P_{\tau}W(x) \text{d}x. 
\end{align*}
Therefore, $1\ge M(\rho^{-1})$ 
and hence $\rho \ge \frac{\tilde{\kappa}(4,2)^4}{\sqrt{2}}$. 

\textit{Part 2: Showing $\rho \le \frac{\tilde{\kappa}^2(4,2)}{\sqrt{2}}$}

If we recall the problem $M(\rho^{-1})$, showing that $\rho \le \frac{\tilde{\kappa}^2(4,2)}{\sqrt{2}}$ is ultimately equivalent to showing $M(\rho^{-1}) \ge 1$. To do this, it suffices to find a good candidate function for the optimization problem defining $M(\rho^{-1})$.  We find the proposed candidate function by considering the maximizer of the following auxiliary function. 
Consider the following problem: 
\begin{align*}
c_0
:= \inf 
\{ \int_{\mathbb{R}^4} f^2(x) dx +\frac{1}{2}\int_{\mathbb{R}^4} |\nabla f(x)|^2 \text{d}x 
\text{ s.t. }
\int_{(\mathbb{R}^4)^2} f^2(x) G (x-y) f^2(y) \text{d}x \text{d}y =1
\}.
\end{align*}
We  will first argue that  $\rho \le c_0^{-1}$.  As we will discuss in more detail in equation \eqref{eq:auxtomainopt}, the intuition regarding the main relationship between $c_0$ and $M(\theta)$ is that $M(\theta)$ will exactly be $1$ when $\theta = c_0$ (or more exactly that $M(\theta)>1$ if $\theta >c_0$). 
Thus, we will be done if we show  $\rho \le c_0^{-1}$. 
Given $f$ and $k$ such that,
$$
\int_{(\mathbb{R}^4)^2} f^2(z,e) \tilde
{G}(e) \text{d}z \text{d}y  =1, \quad \int_{\mathbb{R}^4} k^2(z) \text{d} z = 1,
$$
and consider 
$$
F(\lambda):= \int_{\mathbb{R}^4} f(\lambda-e,e) \sqrt{k}(\lambda-e) \tilde{G}(e) \text{d} e.
$$
It suffices to show 
\begin{align*}
A := \int_{(\mathbb{R}^4)^2}  P_{\tau}(x-y) F(x) F(y) \text{d}x \text{d}y \le c_0^{-1}.
\end{align*} 
We remark here that the quantity on the right hand side of the definition symbol is exactly the term in the optimization problem defining $\rho$ when we use test functions $f$ and $k$ from earlier. 

We first claim that for any $h$, we have that,
$$
\int_{\mathbb{R}^4} F(x) h(x) \text{d} x \le \left[\int_{(\mathbb{R}^4)^2} h^2(x) G(x-y) h^2(y) \text{d}x \text{d}y \right]^{1/4}.
$$
To see this, we see that,
\begin{equation} \label{eq:chainineq}
\begin{aligned}
\int_{\mathbb{R}^4} F(x) h(x) \text{d}x &= \int_{(\mathbb{R}^4)^2} f(x-e,e) \sqrt{k}(x-e)  \tilde{G}(e) h(x) \text{d}x \text{d}e \\
&= \int_{(\mathbb{R}^4)^2} f(x,e) \sqrt{k}(x) \tilde{G}(e) h(x+e) \text{d}x \text{d}e\\
&\le \left[\int_{(\mathbb{R}^4)^2} f(x,e)^2 \tilde{G}(e) \text{d}x \text{d}e \right]^{1/2} 
\left[\int_{\mathbb{R}^4} k(x) \left(\int h^2(x+e) \tilde{G}(e) \text{d}e\right) \text{d} x\right]^{1/2}\\
& \le \left[\int_{\mathbb{R}^4} k(x)^2 \text{d} x \right]^{1/4}
 \left[\int_{\mathbb{R}^4} \left(\int_{\mathbb{R}^4} h^2(x+e) \tilde{G}(e) \text{d}e\right)^2 \text{d}x  \right]^{1/4}\\
& = \left[\int_{(\mathbb{R}^4)^3} h^2(x+e_1) \tilde{G}(e_1) \tilde{G}(e_2) h^2(x+e_1) \text{d}x \text{d}e_1 \text{d}e_2  \right]^{1/4}\\
& = \left[\int_{(\mathbb{R}^4)^2} h^2(x) G(x-y) h^2(y) \text{d}x \text{d}y \right]^{1/4}.
\end{aligned}
\end{equation}
Then, inequality \eqref{eq:chainineq} shows that
\begin{align*}
A=\langle P_{\tau}F, F \rangle
\le \| P_{\tau}F \|_{G}, 
\end{align*}
where $\| P_{\tau}F \|_G:=[\int_{(\mathbb{R}^4)^2} (P_{\tau}F)^2(x) G(x-y) (P_{\tau}F)^2 (y) dxdy]^{1/4}$ 
and $\langle \cdot \rangle$ is the standard $L^2$ inner product.  
Then, 
\begin{align*}
& A=\langle P_{\tau}F, F \rangle
=\langle P_{\tau}F, (I-2^{-1}\Delta)P_{\tau}F \rangle
\\
&= \| P_{\tau}F \|_G^2 \left\langle \frac{P_{\tau}F}{\| P_{\tau}F \|_G}, 
(I-2^{-1}\Delta)\frac{P_{\tau}F}{\| P_{\tau}F \|_G} \right \rangle
\ge \| P_{\tau}F \|_G^2  c_0 \ge A^2 c_0. 
\end{align*}
By dividing by $A c_0$, we see that $c_0^{-1} \ge A$. As this is true for arbitrary functions $f$ and $k$, this implies that $\rho \le c_0^{-1}$. 
Hence, for any $0< \epsilon <\rho$ there is $f$ such that
\begin{align*}
\frac{1}{\rho-\epsilon}
>\int_{\mathbb{R}^4} f^2(x) dx +\frac{1}{2}\int_{\mathbb{R}^4} |\nabla f(x)|^2 \text{d}x .
\end{align*}

Now, we can return to proving $M(\rho^{-1}) \le 1$. 
If we set $g(x)=f(x)/ \| f\|_2$, 
we have 
\begin{equation} \label{eq:auxtomainopt}
\begin{aligned}
&\frac{1}{\rho-\epsilon}
[ \int_{(\mathbb{R}^4)^2} g^2(x) G (x-y) g^2(y) \text{d}x \text{d}y ]^{1/4}
-\frac{1}{2}\int_{\mathbb{R}^4} |\nabla g(x) |^2 \text{d}x\\
=& \frac{1}{\rho-\epsilon}-\frac{1}{2}\int_{\mathbb{R}^4} |\nabla g(x) |^2 \text{d}x\\
\ge& \left\{ \int_{\mathbb{R}^4} f^2(x) dx +\frac{1}{2}\int_{\mathbb{R}^4} |\nabla f(x)|^2 \text{d}x \right\}
\|f\|_{L^2}^{-2} -\frac{1}{2} \|f\|_{L^2}^{-2} \|\nabla f \|_{L^2}^{2}
=1
\end{aligned}
\end{equation}
for $\int_{(\mathbb{R}^4)^2} g^2(x) G (x-y) g^2(y) \text{d}x \text{d}y=1$. 
This shows that for any $\epsilon$ that $1 \le M\left(\frac{1}{\rho - \epsilon}\right)$. 
Since $\epsilon$ is arbitrary, this implies that $1\le M(\frac{1}{\rho})$ 
and hence $\rho \le \frac{\tilde{\kappa}(4,2)^2}{\sqrt{2}}$. 
\end{proof}

\section{Self-intersection: The Proof of Theorem \ref{p4}}\label{sec:self-inter}

In this section, we will provide the proof of Theorem \ref{p4}. 

\begin{proof}
We set 
\begin{align*}
B(I)=\beta_{t-s}\circ \theta_s 
\end{align*}
and 
\begin{align*}
A(I,J)=\int_I\int_J G(B_s-B_t) \text{d}s\text{d}t 
\end{align*}
such as  \cite[(2.3), (2.4)]{BC04}. 
Note that $B([1/2,1])=_d\beta_{1/2}=_d \beta_1/2 $ and $A([0,1/2]; [1/2,1])=_d 1/2 \int_{0}^1 \int_{0}^1 G(B_t-B'_s) \text{d}t \text{d}s$.  
Moreover, $B([1/2,1])$ is independent of $\beta_{1/2}$. 
Then,  to show the upper bound of \eqref{p4*}, we only have to repeat of the proof of the upper bound of \cite[(3.3)]{BC04}. 

Now we show the lower bound. 
Let
\begin{align*}
C_n
=\sum_{k=1}^{n-1}
A([0,k];[k,k+1])
\end{align*}
for $n=1,2\ldots$ 
We prove
\begin{align}\label{proms}
\liminf_{n\to \infty}
\frac{1}{n} \log \mathbb{E} \exp (\lambda C_n^{1/2})
 \ge \frac{\lambda^2\tilde{k}^2(4,2)}{4}
\end{align}
for $\lambda>0$, 
which corresponds to \cite[(3.9)]{BC04}.
Set $L(t,x)=\int_0^t \tilde{G}(B_s- x) \text{d}s$. 
Then, we have
 \begin{align*}
\left(\iint_{0\le s\le t \le n} G(B_s-B_t) \text{d}s\text{d}t\right)^{1/2}
=& \frac{1}{\sqrt{2}} \left(\int_{\RR^4} L^2(n,x)  \text{d}x\right)^{1/2}\\
\ge& \frac{1}{\sqrt{2}} \int_{\RR^4} f(x) L(n,x)  \text{d}x \\
=& \frac{1}{\sqrt{2}}  \int_0^n \tilde{G}\ast f(B_t)  \text{d}t
\end{align*}
for 
\begin{align}\label{intl2}
\int_{\RR^4} f^2 (x) \text{d}x=1.  
\end{align}
Therefore, by Feynman-Kac formula, 
\begin{align*}
\liminf_{n\to \infty}
&\frac{1}{n} \log \mathbb{E} \exp \left(\lambda \left(\iint_{0\le s\le t \le n} G(B_s-B_t) \text{d}s\text{d}t\right)^{1/2} \right)\\
 \ge & \sup_{g} 
 \bigg\{\frac{\lambda}{\sqrt{2}} \int_{\RR^4} \tilde{G}\ast f(x) g^2(x) \text{d}x -\frac{1}{2}  \int_{\RR^4} |\nabla g(x)|^2 \text{d}x \bigg\}.
\end{align*}
Taking the supremum over $f$ with \eqref{intl2}, it is larger than or equal to
\begin{align*}
 \sup_{g} 
 \bigg\{\frac{\lambda}{\sqrt{2}} \left(\iint_{(\RR^4)^2}  g^2(x)  \tilde{G}(x-y) g^2(y) \text{d}x \text{d}y\right)^{1/2}
 -\frac{1}{2}  \int_{\RR^4} |\nabla g(x)|^2 \text{d}x \bigg\}.
\end{align*}
Therefore, by the same proof as \cite[(3.9)]{BC04}, we obtain \eqref{proms}. 
\end{proof}

\appendix

 \section{Regularizing the singularity near the origin}\label{ref:appendixA}

There are difficulties with dealing with the singularity around the origin when deriving an upper bound for the high moments. In this section, we will consider the moments of the following function,
\begin{equation*}
    \int_{0}^{\tau_1} \int_{0}^{\tau_2} \frac{\mathbbm{1}(|B_t - B'_s| \le \epsilon)}{|B_t - B'_s|^2}  \text{d} t \text{d}s,
\end{equation*}
where $B_t$ and $B'_s$ are independent Brownian motions and $\tau_1,\tau_2$ are independent exponential random variables of rate 1. The new factor here is the introduction of the cutoff $\mathbbm{1}(|B_t - B'_s|)$. 
An expression of the $n$-th moment of this term is given by,
\begin{equation*}
   \mathbb{E}_{\tau}\left[ \int_{[0,\tau_1]^n} \text{d}t_1\ldots \text{d}t_n \int_{[0,\tau_2]^n} \text{d}s_1 \ldots \text{d} s_n \mathbb{E}_{B,B'}\left(\prod_{i=1}^n \frac{\mathbbm{1}(|B_{t_i} - B'_{s_i}| \le \epsilon)}{|B_{t_1} - B'_{s_i}|^2} \right)\right].
\end{equation*}
The first expectation is with respect to the exponential random variable $\tau$. 
The expectation inside is with respect to the Brownian motions $B$ and $B'$.

Let us give some intuition on why this cutoff will give a subleading order term. 
If we are interested in computing the $n$-th moment of the term without cutoff, then the contribution mostly comes when $\tau_1,\tau_2$ is $\approx n$. 
At this scale, the ordered consecutive differences (assume $t_1 \le t_2\ldots \le t_n$ then the consecutive differences would be $t_k-t_{k-1}$) would approximately be of $O(1)$. 
The partial differences $B_{t_i} - B_{t_{i-1}}$ would fluctuate to within $O(1)$ as well. 
Thus, it becomes increasingly unlikely that they could be confined to a neighborhood of size $O(\epsilon)$, as would be needed by the term $\mathbbm{1}(|B_{t_i} - B'_{s_i}|)$. In the remainder of this section, we will try to formalize this intuition.

We start with a lemma that controls some of the expectations that we would see.
\begin{lem}
We have the following estimates. There is some universal constant $C$ not dependent on $\epsilon$ such that,
\begin{equation} \label{eq:intBres}
\begin{aligned}
    & \mathbb{E}_{B}\left[ \frac{\mathbbm{1}(|B_t -x|\le \epsilon)}{|B_t - x|^2} \right] \le  C \min\left( \frac{1}{|x|^2}, \frac{\epsilon^2}{t^2}, \frac{1}{t} \right) \le  \frac{C}{|x|} \min\left( \frac{\epsilon}{t}, \frac{1}{\sqrt{t}}\right),\\
    & \mathbb{E}_{B} \left[ \frac{\mathbbm{1}(|B_t - x| \le \epsilon)}{|B_t - y|^2} \right] \le  C \min\left(\frac{1}{|y|^2}, \frac{\epsilon^2}{t^2}, \frac{1}{t}\right).
\end{aligned}
\end{equation}
\end{lem}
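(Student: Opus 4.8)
The plan is to evaluate both expectations directly against the Gaussian transition density $p_t(z)=(2\pi t)^{-2}e^{-|z|^{2}/(2t)}$ of four-dimensional Brownian motion started at $0$, handling each of the three quantities inside the minimum separately and then taking the smallest of them (enlarging $C$ as needed). The final inequality in the first line will follow at once from $\min(u,w)\le\sqrt{uw}$ applied to the pairs $(|x|^{-2},\epsilon^{2}t^{-2})$ and $(|x|^{-2},t^{-1})$, so the real content is the three individual bounds.

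First I would prove the $\epsilon^{2}/t^{2}$ bound. For the first estimate, substituting $w=z-x$ turns the expectation into $\int_{|w|\le\epsilon}|w|^{-2}p_t(w+x)\,\text{d}w$; bounding $p_t\le(2\pi t)^{-2}$ and using $\int_{|w|\le\epsilon}|w|^{-2}\,\text{d}w=c_{4}\epsilon^{2}$ in dimension four gives the bound. For the second estimate the singularity lies at $y\neq x$, so after the same pointwise bound on $p_t$ one is left with $\int_{|z-x|\le\epsilon}|z-y|^{-2}\,\text{d}z$; here I would invoke the Hardy--Littlewood rearrangement inequality, noting that $z\mapsto|z-y|^{-2}$ is radially decreasing about $y$, to bound this by $\int_{|z-y|\le\epsilon}|z-y|^{-2}\,\text{d}z=c_{4}\epsilon^{2}$.

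Next I would treat the $1/t$ and $1/|x|^{2}$ bounds, for which the indicator can simply be discarded, reducing matters to $\mathbb{E}_B\big[|B_t-a|^{-2}\big]\le C\min(t^{-1},|a|^{-2})$ for all $a\in\RR^{4}$ (take $a=x$, and $a=y$ for the $1/|y|^{2}$ and $1/t$ parts of the second estimate). By Brownian scaling $B_t=_d\sqrt{t}\,B_1$ this becomes $\Phi(a):=\mathbb{E}\big[|B_1-a|^{-2}\big]\le C\min(1,|a|^{-2})$. The uniform bound $\Phi(a)\le C$ is immediate from local integrability of $|w|^{-2}$ in $\RR^{4}$. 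For the decay I would split on $\{|B_1|\le|a|/2\}$, where $|B_1-a|\ge|a|/2$ so the contribution is $\le 4|a|^{-2}$, and on $\{|B_1|>|a|/2\}$, which has Gaussian-small probability and can be handled by H\"older with exponent $3/2$ together with $\mathbb{E}\big[|B_1-a|^{-3}\big]\le C$ (finite uniformly in $a$, since $|w|^{-3}$ is still locally integrable in four dimensions), producing a contribution $\le Ce^{-c|a|^{2}}$.

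Combining the three bounds gives the first estimate, and the second follows by assembling its $\epsilon^{2}/t^{2}$ part (the rearrangement argument) with the $a=y$ case of $\Phi$. I expect the only genuinely delicate point to be the uniform-in-$a$ decay of $\Phi$, where the blow-up of $|B_1-a|^{-2}$ near $B_1=a$ must be balanced against the Gaussian tail of $\{|B_1|>|a|/2\}$; everything else is a routine heat-kernel computation.
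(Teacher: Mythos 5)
Your proposal is correct, and it takes a genuinely different route from the paper. The paper proves the lemma by splitting into cases according to the relative sizes of $\epsilon$, $\sqrt t$, and $|x|$ (for instance $\epsilon\ge\sqrt t$, $|x|/4\ge\sqrt t\ge\epsilon$, etc.), carrying the Gaussian factor $\exp[-|x|^2/(2t)]$ through each case and, in the regime $\epsilon\ge\sqrt t$, deferring to a prior estimate (equation (4.8) of \cite[Lemma 4.2]{DO}); the second estimate is similarly handled by a case split on $\epsilon$ versus $|x-y|/4$ and several further sub-cases. You instead establish each of the three quantities in the minimum as a uniform bound: the $\epsilon^2/t^2$ bound by replacing the heat kernel by its sup $(2\pi t)^{-2}$ and integrating the singularity over the $\epsilon$-ball (with the Hardy--Littlewood rearrangement, or equivalently the bathtub principle, handling the shifted singularity $|z-y|^{-2}$ in the second estimate), and the $1/t$ and $1/|a|^{2}$ bounds by discarding the indicator and using Brownian scaling to reduce both to the single, $t$-free inequality $\Phi(a):=\mathbb{E}[|B_1-a|^{-2}]\le C\min(1,|a|^{-2})$, proved by splitting on $\{|B_1|\le|a|/2\}$ and controlling the complement via H\"older and the Gaussian tail. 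What your argument buys is brevity and self-containedness: no case analysis, no dependence on \cite{DO}, and the second estimate comes for free once $\Phi$ is bounded (the paper instead re-derives a $|x-y|^{-2}$ bound and must then argue separately that it is dominated by $|y|^{-2}$). What the paper's approach buys is that it stays entirely with elementary heat-kernel computations and avoids invoking rearrangement. One small remark: your derivation of the final inequality in the first line, $\min(|x|^{-2},\epsilon^2 t^{-2},t^{-1})\le|x|^{-1}\min(\epsilon t^{-1},t^{-1/2})$ via $\min(u,w)\le\sqrt{uw}$ applied to the two pairs, is indeed exactly what is needed and is how the chain should be closed.
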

\begin{proof}
In the course of the proof, $C$ is a constant that is allowed to change from line to line. 
We start with considering the expectation of $\frac{\mathbbm{1}(|B_t - x| \le \epsilon)}{|B_t - x|^2}$. 

There are a few cases to consider.
The first case is when $\epsilon$ is larger than $\sqrt{t}$. 
In this case, we may drop the restriction $\mathbbm{1}(|B_t -x| \le \epsilon)$ and use the estimate (4.8) from  \cite[Lemma 4.2]{DO}. 
In the case that $\frac{|x|}{4} \ge \sqrt{t} \ge \epsilon$ , we write the integral as,
\begin{equation*}
\begin{aligned}
    \frac{C}{t^2}\int_{|z-x| \le \epsilon} \frac{\exp[-|z|^2/t]}{|z-x|^2} \text{d} z  &= \frac{C}{t^2} \int_{|z'| \le \epsilon} \frac{\exp[-|z'|^2/t -2 \langle z',x \rangle/t - x^2/t]}{|z'|^2} \text{d} z'\\
    & \le \frac{C \exp[-x^2/t]}{t^2}  \int_{|z'| \le \epsilon} \frac{1}{|z'|^2} \text{d}z' = \frac{C \epsilon^2 \exp[-x^2/(2t)]}{t^2} \le \frac{C }{|x|^2}.
\end{aligned}
\end{equation*}
To get the last line, we used the fact that when $\epsilon \le \frac{|x|}{4}$ we have that $2\langle z',x \rangle \le |x|^2/(2t)$. Furthermore, using that $|x|^2 \ge t$ and that $\sqrt{t} \ge \epsilon$, we can say there is some constant $C$ such that 
$ \frac{C }{|x|^2} \ge \frac{1}{t} \exp[- x^2/(2t)] \ge \frac{\epsilon^2}{t^2} \exp[-x^2/(2t)]$. 
When $\sqrt{t} \ge \epsilon \ge \frac{|x|}{4}$ or $\sqrt{t} \ge \frac{|x|}{4} \ge  \epsilon$, we can bound the integral as follows,
\begin{equation*}
    \frac{C}{t^2} \int_{|z-x| \le \epsilon} \frac{\exp[-|z|^2/t]}{|z-x|^2 } \text{d}z \le \frac{C}{t^2} \int_{|z-x| \le \epsilon} \frac{1}{|z-x|^2} \text{d} z  = \frac{C \epsilon^2}{t^2} \le \frac{C }{t} \le \frac{16 C \epsilon}{|x|^2}.  
\end{equation*}
This gives the first part of the lemma. 
Now, we consider the integral of $\frac{\mathbbm{1}(|B_t - x| \le \epsilon)}{|B_t - y|^2}$. 
When $\epsilon \le |x- y|/4$, then $|B_t - x| \le \epsilon$ will imply $|B_t - y| \le 3|x- y|/4$ and  thus,
\begin{equation*}
    \mathbb{E} \left[ \frac{\mathbbm{1}(|B_t-x| \le \epsilon)}{|B_t-y|^2} \right] \le \frac{4}{|x-y|^2}\mathbbm{E}[\mathbbm{1}(|B_t-x| \le \epsilon)] \le \frac{C}{|x-y|^2} \min\left[\frac{\epsilon^4}{t^2},1\right].
\end{equation*}
If instead $\epsilon \ge |x-y|/4$, then we can say that $\mathbbm{1}(|B_t - x| \le \epsilon) \le \mathbbm{1}(|B_t - y| \le 5 \epsilon)$, and we can then use the estimates on the moments of $\mathbb{E}\left[ \frac{\mathbbm{1}(|B_t - x| \le \epsilon}{|B_t -x |^2}\right]$. 
Since we have that $\epsilon \le |x-y|/4$, we see that 
$$
\frac{C}{|x-y|^2} \frac{\epsilon^4}{t^2} \le \frac{C \epsilon^2}{16 t^2}.
$$
If we knew instead that $\sqrt{t} \le \epsilon$, then we have that,
$$
\frac{C}{|x-y|^2} \le \frac{16C}{\epsilon^2} \le \frac{16C}{t}.
$$

Now, we need to prove that $\frac{1}{|x-y|^2} \mathbb{E}[\mathbbm{1}(|B_t-x|\le \epsilon)] \le \frac{C}{|y|^2}$ for some constant $C$. 
If $|x-y| \ge |y|/4$, we would be done. 
If not, then we have that $|x| \ge |y| -|x-y| \ge 3|y|/4$. Furthermore, $\epsilon \le |x-y|/4 \le |y|/16 \le |x|/12$. 
Thus, we can write,
$$
\mathbb{E}[\mathbbm{1}(|B_t - x| \le \epsilon)] = \frac{C}{t^2} \int_{|z| \le \epsilon} \exp[-|z|^2/t - 2 \langle z,x \rangle/t - |x|^2/t] \text{d}z.
$$
We have that $ - 2 \langle z, x \rangle /t - |x|^2/2 \le - |x|^2/(2t)$ since $|z| \le \epsilon \le |x|/12$. We thus have,
$$
\begin{aligned}
\frac{C}{t^2} \int_{|z| \le \epsilon} \exp[-|z|^2/t - 2 \langle z,x \rangle/t - |x|^2/t] \text{d} z
&\le \frac{C}{t^2}\exp[-|x|^2/(2t)] \int_{|z| \le \epsilon} \exp[-|z|^2/t] \text{d} z\\
& \le C\min(1, \frac{\epsilon^4}{t^2}) \exp[-|x|^2/(2t)].
\end{aligned}
$$
If $\epsilon^2 \le t$, we use the fact that $\epsilon^ 2\le |x-y|^2/16 $ and derive that
$$
\frac{\epsilon^4}{|x-y|^2t^2} \exp[-|x|^2/(2t)] \le \frac{1}{16 t} \exp[-|x|^2/(2t)] \le \frac{C}{|x|^2} \le \frac{ C}{9 |y|^2},
$$
in addition if $\epsilon^2 \ge t$, we instead get that,
$$
\frac{1}{|x-y|^2}\exp[-|x|^2/(2t)] \le \frac{16}{\epsilon^2} \exp[-|x|^2/(2t)] \le \frac{16}{t} \exp[-|x|^2/(2t)] \le \frac{16C}{|x|^2} \le \frac{256 C}{9|y|^2}. 
$$
Then, we obtain the desired result. 
\end{proof}

As a consequence of these estimates, we can derive the following estimates,
\begin{lem}
There is a universal constant $C$ not dependent on $\epsilon$ such that the following estimates hold: 
\begin{equation}\label{eq:intBres2}
\begin{aligned}
    &\mathbb{E}_{B}\left[ \frac{\mathbbm{1}(|B_t - x| \le \epsilon)}{|B_t -x||B_t - y|} \right] \le C\min\left(\frac{1}{|x|}, \frac{1}{|y|}\right) \min\left(\frac{\epsilon}{t},\frac{1}{\sqrt{t}}\right),\\
    & \mathbb{E}_{B}\left[ \frac{\mathbbm{1}(|B_t-x| \le \epsilon)}{|B_t-x|^2|B_t - y|} \right] \le C \frac{1}{|x||x-y|} \min\left(\frac{\epsilon}{t}, \frac{1}{\sqrt{t}}\right).
\end{aligned}
\end{equation}
\begin{proof}
The first inequality can be derived via the Cauchy-Schwarz inequality. 
That is, we have,
\begin{equation*}
    \mathbb{E}\left[ \frac{\mathbbm{1}(|B_t - x| \le \epsilon)}{|B_t -x||B_t - y|} \right] \le \mathbb{E}\left[ \frac{\mathbbm{1}(|B_t - x| \le \epsilon)}{|B_t -x|^2} \right]^{1/2} \mathbb{E}\left[ \frac{\mathbbm{1}(|B_t - x| \le \epsilon)}{|B_t -y|^2} \right]^{1/2}.
\end{equation*}
If $\frac{1}{|x|} \le \frac{1}{|y|}$, we can bound $\mathbb{E}\left[ \frac{\mathbbm{1}(|B_t - x| \le \epsilon)}{|B_t -x|^2} \right]^{1/2}$ by $\frac{1}{|x|^2}$, using the first inequality of \eqref{eq:intBres}.  the integral $\mathbb{E}\left[ \frac{\mathbbm{1}(|B_t - x| \le \epsilon)}{|B_t -y|^2}\right]$ can be bounded by $\min\left(\frac{\epsilon^2}{t^2}, \frac{1}{\sqrt{t}}\right)$ by the second inequality of \eqref{eq:intBres}. 
If instead $\frac{1}{|y|} \le \frac{1}{|x|}$, we can go the other way around. 
To deal with the second inequality of \eqref{eq:intBres2}, we instead use,
\begin{equation*}
\begin{aligned}
    \mathbb{E}_{B}\left[ \frac{\mathbbm{1}(|B_t-x| \le \epsilon)}{|B_t-x|^2|B_t - y|} \right]  &\le \frac{1}{|x-y|} \mathbb{E}\left[ \frac{\mathbbm{1}(|B_t-x| \le \epsilon)}{|B_t-x||B_t - y|} \right] + \frac{1}{|x-y|}\mathbb{E}\left[ \frac{\mathbbm{1}(|B_t-x| \le \epsilon)}{|B_t-x|^2} \right]\\
    &\le \frac{C}{|x-y||x|} \min\left(\frac{\epsilon}{t}, \frac{1}{\sqrt{t}}\right).
\end{aligned}
\end{equation*}
The last line used the first inequality of \eqref{eq:intBres} and the first inequality of \eqref{eq:intBres2}. 
\end{proof}

The main improvement in this lemma compared to \cite[Lemma 4.2]{DO} is the change of the time bound to $\min\left(\frac{\epsilon}{t}, \frac{1}{\sqrt{t}}\right)$. We are now in good shape to bound the moments of $\int_{0}^{\tau_1} \int_{0}^{\tau_2} \frac{\mathbbm{1}(|B_t - B_s'| \le \epsilon)}{|B_t - B_s'|^2} \text{d} t \text{d}s$.

\begin{lem} \label{lem:Brownianorigin}
There is some universal constant $C$, not dependent on $\epsilon$ or $n$, such that we have the following moment estimates: 
\begin{equation*}
    \mathbb{E}\left(\int_{0}^{\tau_1} \int_0^{\tau_2} \frac{\mathbbm{1}(|B_t - B'_s| \le \epsilon)}{|B_t - B'_s|^2} \text{d}t \text{d}s\right)^n \le C^n \sqrt{\epsilon}^n (n!)^2.
\end{equation*}
Here, $\tau_1,\tau_2$ are two independent exponential random variables with rate 1 and $B$, $B'$ are two independent Brownian motions.
\end{lem}
\begin{proof}

We start with bounding the more general quantity,
\begin{equation*}
\begin{aligned}
    &\mathbb{E}_B  \prod_{i=1}^{n} \frac{\mathbbm{1}(|B_{t_i} - y_i| \le \epsilon)}{|B_{t_i} - y_i|^2}\\
     &=\mathbb{E}_B \prod_{i=1}^{n-1} \frac{\mathbbm{1}(|B_{t_i} - y_i| \le \epsilon)}{|B_{t_i} - y_i|^2} \frac{\mathbbm{1}(|B_{t_n}- B_{t_{n-1}} - (y_n - B_{t_{n-1}})| \le \epsilon)}{|(B_{t_n} - B_{t_{n-1}}) - (y_n - B_{t_{n-1}})|^2}\\
     &\le C  \mathbb{E}_B   \prod_{i=1}^{n-1} \frac{\mathbbm{1}(|B_{t_i} - y_i| \le \epsilon)}{|B_{t_i} - y_i|^2} \frac{1}{|B_{t_{n-1}} -y_n|} \min\left( \frac{\epsilon}{t_n - t_{n-1}}, \frac{1}{\sqrt{t_n - t_{n-1}}}\right).
\end{aligned}
\end{equation*}
To get the last inequality, we used the fact that the difference $B_{t_n}- B_{t_{n-1}}$ is independent of the Brownian walk up to time $t_{n-1}$ and is distributed according to a Brownian motion at time $t_n - t_{n-1}$. 
We then used the first inequality of \eqref{eq:intBres}. 
At this point, we can proceed in an inductive fashion. We have,
\begin{equation*}
\begin{aligned}
     & \mathbb{E}_B   \prod_{i=1}^{n-1} \frac{\mathbbm{1}(|B_{t_i} - y_i| \le \epsilon)}{|B_{t_i} - y_i|^2} \frac{1}{|B_{t_{n-1}} -y_n|} \min\left( \frac{\epsilon}{t_n - t_{n-1}}, \frac{1}{\sqrt{t_n - t_{n-1}}}\right)\\
     &=\mathbb{E}_B   \prod_{i=1}^{n-2} \frac{\mathbbm{1}(|B_{t_i} - y_i| \le \epsilon)}{|B_{t_i} - y_i|^2} \frac{\mathbbm{1}(|(B_{t_{n-1}} -B_{t_{n-2}}) - (y_{n-1} -B_{t_{n-2}}) | \le \epsilon)}{|(B_{t_{n-1}} -B_{t_{n-2}}) - (y_{n-1} -B_{t_{n-2}}) |^2|(B_{t_{n-1}} -B_{t_{n-2}}) - (y_{n} -B_{t_{n-2}}) |  }\\
     &\le C \mathbb{E}_B \prod_{i=1}^{n-2} \frac{\mathbbm{1}(|B_{t_i} - y_i| \le \epsilon)}{|B_{t_i} - y_i|^2} \frac{1}{|B_{t_{n-1}} -y_n||y_n - y_{n-1}|} \min\left(\frac{\epsilon}{t_{n-1} - t_{n-2}}, \frac{1}{\sqrt{t_{n-1} - t_{n-2}}}\right). 
\end{aligned}
\end{equation*}
Combining these steps we see that,
\begin{equation} \label{eq:iterres}
    \mathbb{E}_B  \prod_{i=1}^{n} \frac{\mathbbm{1}(|B_{t_i} - y_i| \le \epsilon)}{|B_{t_i} - y_i|^2} \le C^n \prod_{i=1}^n \frac{1}{|y_i - y_{i-1}|} \min\left(\frac{\epsilon}{t_{i-1} - t_{i-2}}, \frac{1}{\sqrt{t_{i-1} - t_{i-2}}}\right).
\end{equation}
The $n$-th moment of $\int_{0}^{\tau_1} \int_0^{\tau_2} \frac{\mathbbm{1}(|B_t - B'_s| \le \epsilon)}{|B_t - B'_s|^2} \text{d}t \text{d}s$ can be expressed as,
\begin{equation} \label{eq:nthmomentres}
\begin{aligned}
    &n!\mathbb{E}_{\tau} \int_{0 \le t_1\le t_2\ldots \le t_n \le \tau_1} \text{d}t_1\ldots \text{d}t_n  \int_{[0,\tau_2]^n} \text{d} s_1 \ldots \text{d}s_n \mathbb{E}_{B,B'}\left[\prod_{i=1}^n \frac{\mathbbm{1}(|B_{t_i} - B'_{s_i}| \le \epsilon)}{|B_{t_i} - B'_{s_i}|^2}\right]\\
    &\le C^n n!\mathbb{E}_{\tau} \int_{0 \le t_1\le t_2\ldots \le t_n \le \tau_1} \text{d}t_1\ldots \text{d}t_n \min\left( \frac{\epsilon}{t_{i} -t_{i-1}}, \frac{1}{\sqrt{t_i - t_{i-1}}}\right)  \\
    &\times \int_{[0,\tau_2]^n} \text{d} s_1 \ldots \text{d}s_n \prod_{i=1}^n  \mathbb{E}_{B'}\left[\prod_{i=1}^n \frac{1}{|B'_{s_{i}} - B'_{s_{i-1}}|} \right]\\
    & \le n! \mathbb{E}_{\tau} C^n \int_{0 \le t_1\le t_2\ldots \le t_n \le \tau_1} \text{d}t_1\ldots \text{d}t_n \min\left( \frac{\epsilon}{t_{i} -t_{i-1}}, \frac{1}{\sqrt{t_i - t_{i-1}}}\right) \\& \times n! C^n \int_{0 \le s_1 \le s_2\ldots \le s_n \le \tau_2 } \prod_{i=1}^n \frac{1}{\sqrt{s_i - s_{i-1}}}. 
\end{aligned}
\end{equation}
To obtain the second inequality, we used \eqref{eq:iterres}. 

To get the final inequality, we used equation \cite[(4.18)]{DO}.
By scaling one has
\begin{align*}
&\int_{0 \le s_1 \le s_2\ldots \le s_n \le \tau_2 } \text{d}s_1\ldots \text{d}s_n \prod_{i=1}^n \frac{1}{\sqrt{s_i - s_{i-1}}} \\
 = & \tau_2^{n/2}\int_{0 \le s_1 \le s_2\ldots \le s_n \le 1}  \text{d}s_1\ldots \text{d}s_n \prod_{i=1}^n \frac{1}{\sqrt{s_i - s_{i-1}}}  
 \le \frac{C^n \tau_2^{n/2}}{(n!)^{1/2}}.
\end{align*}
One can see equation \cite[(4.21)]{DO} for a reference. 
The more important term to deal with is,
\begin{equation*}
    \int_{0 \le t_1\le t_2\ldots \le t_n \le \tau_1} \text{d}t_1\ldots \text{d}t_n \min\left( \frac{\epsilon}{t_{i} -t_{i-1}}, \frac{1}{\sqrt{t_i - t_{i-1}}}\right)
\end{equation*}
 Let $I_k$ denote the value of the integral,
 \begin{equation*}
     I_k:= \int_{0 \le \theta_1 \le \theta_2\ldots \le \theta_k \le 1} \text{d}\theta_1 \ldots \text{d}\theta_k  \prod_{i=1}^k \frac{1}{\sqrt{\theta_i - \theta_{i-1}}}.
 \end{equation*}
 Notice that $I_k$ satisfies the relation,
 \begin{equation*}
     I_k = \int_0^1 \frac{I_{k-1} (1- \theta_1)^{(k-1)/2}}{\sqrt{\theta_1}} \text{d}\theta_1,
 \end{equation*}
and $I_0$ is also understood to be $0$. 
 By induction, we will prove the following inequality,
 \begin{equation*}
     \int_{0 \le t_1 \le t_2 \ldots \le t_n \le \tau_1} \text{d}t_1\ldots \text{d}t_n
      \min\left(\frac{\epsilon}{t_i - t_{i-1}}, \frac{1}{\sqrt{t_i -t_{i-1}}}\right) \le 2^n \sqrt{\epsilon}^n \sum_{k=0}^n (\tau_1)^{k/2} I_{k} {n \choose k}.
 \end{equation*}
 The base case $n=1$ can be bounded from above by the integral
 \begin{align*}
     \int_{0}^{\tau_1} \min\left( \frac{1}{\sqrt{t}}, \frac{\epsilon}{t}\right) \text{d} t 
     \le& \int_{0}^{\epsilon} \frac{1}{\sqrt{t}} \text{d} t + \int_{\epsilon}^{\tau_1} \frac{\epsilon}{t} \text{d} t \\
     \le & 2\sqrt{\epsilon} + \sqrt{\epsilon}  \int_{\epsilon}^{\tau_1} \frac{1}{\sqrt{t}} \text{d}t \le 2 \sqrt{\epsilon} + \sqrt{\epsilon} \int_{0}^{\tau_1} \frac{1}{\sqrt{t}} \text{d}t = 2 \sqrt{\epsilon}I_0 + \sqrt{\epsilon} \sqrt{\tau_1} I_1.
 \end{align*}
The second term integral above is understood to be $0$ if $\tau$ is less than $\epsilon$. $\sqrt{\epsilon}$ will still be an upper bound if $\tau$ is less than $\epsilon$.

Now, we can proceed with our induction. 
We have, 
\begin{equation*}
\begin{aligned}
    &\int_{0 \le t_1 \le t_2 \ldots \le t_n \le \tau_1} \text{d}t_1\ldots \text{d}t_n
    \min\left(\frac{\epsilon}{t_i - t_{i-1}}, \frac{1}{\sqrt{t_i -t_{i-1}}}\right)\\ 
    & = \int_{0}^{\tau_1} \text{d}t_1 \min\left(\frac{1}{\sqrt{t_1}}, \frac{\epsilon}{t_1}\right) 
    \int_{t_1 \le t_2 \le \ldots \le t_n \le \tau_1} \text{d}t_2 \ldots \text{d}t_n  \prod_{i=2}^n \min\left(\frac{1}{\sqrt{t_i- t_{i-1}}}, \frac{\epsilon}{t_{i} -t_{i-1}}\right)  \\
    & \le  \int_{0}^{\tau_1} \text{d}t_1 \min\left(\frac{1}{\sqrt{t_1}}, \frac{\epsilon}{t_1} \right) 
    \int_{0 \le t_1' \le \ldots \le t_n'\le \tau_1- t_1} \text{d}t_2' \ldots \text{d}t_n'  \prod_{i=1}^{n-1}\min\left(\frac{1}{\sqrt{t'_i- t'_{i-1}}}, \frac{\epsilon}{t'_{i} -t'_{i-1}}\right)\\
    & \le \int_{0}^{\tau_1} \text{d}t_1 \min\left(\frac{1}{\sqrt{t_1}}, \frac{\epsilon}{t_1} \right) 2^{n-1} \sqrt{\epsilon}^{n-1} \sum_{k=0}^{n-1} (\tau_1 -t_1)^{k/2} I_k {n-1 \choose k}.
\end{aligned}  
\end{equation*}
and
\begin{equation*}
\begin{aligned}
    & \le \int_{0}^{\epsilon} \frac{1}{\sqrt{t_1}} 2^{n-1} \sqrt{\epsilon}^{n-1} \sum_{k=0}^{n-1} (\tau_1)^{k/2} {n-1 \choose k} \text{d}t_1 + \int_{\epsilon}^{\tau_1} \frac{\epsilon}{t_1} 2^{n-1} \sqrt{\epsilon}^{n-1} \sum_{k=0}^{n-1} (\tau_1 -t_1)^{k/2} I_k {n-1 \choose k} \text{d} t_1\\
    & \le 2^{n} \sqrt{\epsilon}^{n} \sum_{k=0}^{n-1} (\tau_1)^{k/2} {n-1 \choose k} + \sqrt{\epsilon} \int_{0}^{\tau_1} \frac{1}{\sqrt{t_1}} 2^{n-1} \sqrt{\epsilon}^{n-1} \sum_{k=0}^{n-1} (\tau_1 -t_1)^{k/2} I_k {n-1 \choose k} \text{d} t_1\\
    & \le 2^n \sqrt{\epsilon}^{n} \sum_{k=0}^{n} I_k (\tau_1)^{k/2} {n \choose k}.
\end{aligned}
\end{equation*}
Thus, our expression for \eqref{eq:nthmomentres} can be bounded by
\begin{equation*}
\begin{aligned}
     C^n(n!)^2  2^n (\sqrt{\epsilon})^n\mathbb{E}_{\tau} \sum_{k=0}^n{n \choose k} I_k I_n \tau_1^{k/2} \tau_2^{n/2} 
    &\le C^n (n!)^2 (\sqrt{\epsilon})^n \sum_{k=0}^n {n \choose k} \frac{1}{\sqrt{k!}} \frac{1}{\sqrt{n!}} \left(\frac{n}{2}\right)! \left(\frac{k}{2}\right)!\\
    & \le  C^n (n!)^2 (\sqrt{\epsilon})^n \sum_{k=0}^n {n \choose k} \le C^n (n!)^2 (\sqrt{\epsilon})^n. 
\end{aligned}
\end{equation*}
Therefore, we obtain the desired result. 
\end{proof}

\end{lem}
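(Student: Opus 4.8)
The plan is to expand the $n$-th moment into a $2n$-fold integral over times, integrate out the two Brownian motions one index at a time via the Markov property together with the pointwise bounds \eqref{eq:intBres} and \eqref{eq:intBres2} proved above, and then reduce everything to an elementary but delicate estimate on an iterated time integral that carries the factor $\sqrt{\epsilon}^{\,n}$.

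First I would expand
\[
\mathbb{E}\Big[\Big(\int_0^{\tau_1}\!\int_0^{\tau_2}\frac{\mathbbm{1}(|B_t-B'_s|\le\epsilon)}{|B_t-B'_s|^2}\,\text{d}t\,\text{d}s\Big)^n\Big]=n!\,\mathbb{E}_\tau\int_{0\le t_1\le\cdots\le t_n\le\tau_1}\!\!\text{d}t\int_{[0,\tau_2]^n}\!\!\text{d}s\;\mathbb{E}_{B,B'}\Big[\prod_{i=1}^n\frac{\mathbbm{1}(|B_{t_i}-B'_{s_i}|\le\epsilon)}{|B_{t_i}-B'_{s_i}|^2}\Big],
\]
where the factor $n!$ comes from symmetrising in the $t$-variables. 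I then condition on $B'$, set $y_i:=B'_{s_i}$, and bound the inner $B$-expectation by peeling off the factor with the largest time $t_n$: since $B_{t_n}-B_{t_{n-1}}$ is an independent Gaussian at time $t_n-t_{n-1}$, the first line of \eqref{eq:intBres} produces a gain $\frac{C}{|B_{t_{n-1}}-y_n|}\min\!\big(\frac{\epsilon}{t_n-t_{n-1}},\frac{1}{\sqrt{t_n-t_{n-1}}}\big)$; the resulting factor $|B_{t_{n-1}}-y_n|^{-1}$ then combines with the factor $|B_{t_{n-1}}-y_{n-1}|^{-2}$ already present at time $t_{n-1}$, and the second line of \eqref{eq:intBres2} lets the induction proceed. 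Iterating down to $t_1$ (using $B_0=0$) gives
\[
\mathbb{E}_B\Big[\prod_{i=1}^n\frac{\mathbbm{1}(|B_{t_i}-y_i|\le\epsilon)}{|B_{t_i}-y_i|^2}\Big]\le C^n\prod_{i=1}^n\frac{1}{|y_i-y_{i-1}|}\,\min\!\Big(\frac{\epsilon}{t_i-t_{i-1}},\frac{1}{\sqrt{t_i-t_{i-1}}}\Big),
\]
with the conventions $y_0=0$, $t_0=0$. I next take $\mathbb{E}_{B'}$ of $\prod_i|B'_{s_i}-B'_{s_{i-1}}|^{-1}$; symmetrising in the $s$-variables (a second factor $n!$) and invoking \cite[(4.18) and (4.21)]{DO} bounds this by $C^n\int_{0\le s_1\le\cdots\le s_n\le\tau_2}\prod_i(s_i-s_{i-1})^{-1/2}\,\text{d}s\le C^n\tau_2^{n/2}(n!)^{-1/2}$.

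The core of the argument is then the remaining time integral $\int_{0\le t_1\le\cdots\le t_n\le\tau_1}\prod_i\min\!\big(\epsilon/(t_i-t_{i-1}),(t_i-t_{i-1})^{-1/2}\big)\,\text{d}t$. I would prove by induction on $n$ that this is at most $2^n\sqrt{\epsilon}^{\,n}\sum_{k=0}^n\tau_1^{k/2}I_k\binom nk$, where $I_k:=\int_{0\le\theta_1\le\cdots\le\theta_k\le1}\prod_i(\theta_i-\theta_{i-1})^{-1/2}\,\text{d}\theta$: peel off $t_1$, split $\int_0^{\tau_1}=\int_0^\epsilon+\int_\epsilon^{\tau_1}$, use $\min(\cdots)\le t_1^{-1/2}$ on $[0,\epsilon]$ (contributing $2\sqrt\epsilon$) and $\min(\cdots)\le\sqrt\epsilon\,t_1^{-1/2}$ on $[\epsilon,\tau_1]$, then apply the induction hypothesis to the remaining $(n-1)$-fold integral over $[0,\tau_1-t_1]$ and collapse $\sum_k\binom{n-1}{k}$ into $\sum_k\binom nk$ by Pascal's rule. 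Assembling the three pieces and taking $\mathbb{E}_\tau$, the $n$-th moment is at most $C^n(n!)^2\sqrt{\epsilon}^{\,n}\,\mathbb{E}_\tau\big[\sum_{k=0}^n\binom nk I_k\,I_n\,\tau_1^{k/2}\tau_2^{n/2}\big]$, and then I use $I_m\le C^m/\sqrt{m!}$ (see \cite[(4.21)]{DO}) together with $\mathbb{E}[\tau^{k/2}]=\Gamma(k/2+1)=(k/2)!$ and $(k/2)!\le\sqrt{k!}$, $(n/2)!\le\sqrt{n!}$ to see that each summand is at most $\binom nk$; hence the sum is $\le2^n$ and the whole expression is $\le C^n(n!)^2\sqrt{\epsilon}^{\,n}$, as claimed.

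The step I expect to be the main obstacle is the inductive bound on the iterated $t$-integral, and in particular extracting exactly the power $\epsilon^{n/2}$: each pointwise estimate contributes only $\min(\epsilon/t,1/\sqrt t)$ per factor, so a crude bound discards all the $\epsilon$-gain, whereas bounding every factor by $\epsilon/t$ overshoots and is not even integrable near $0$. One genuinely needs the split at the scale $t\sim\epsilon$ and the bookkeeping of the $\binom nk I_k$ terms, and then has to check that after integrating out the exponential times $\tau_1,\tau_2$ the extra factorials $(k/2)!\,(n/2)!/\sqrt{k!\,n!}$ and the binomial sum $\sum_k\binom nk$ together cost only a single geometric factor $C^n$ and do not spoil the $(n!)^2$. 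Everything else is routine given the pointwise estimates \eqref{eq:intBres}--\eqref{eq:intBres2} of this appendix; indeed the whole gain over \cite[Lemma 4.2]{DO} is precisely the extra $\epsilon/t$ decay in those estimates, and it is this decay that produces the $\sqrt\epsilon^{\,n}$ in the conclusion.
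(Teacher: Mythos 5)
Your proposal follows essentially the same route as the paper's proof: the same $n!$ symmetrisation and expansion into a $2n$-fold time integral, the same iterated peeling of the $B$-expectation via the pointwise bounds \eqref{eq:intBres} and \eqref{eq:intBres2} to arrive at \eqref{eq:iterres}, the same treatment of the $B'$-product via \cite[(4.18),(4.21)]{DO}, and the same induction on the iterated $t$-integral using $I_k$ and the $\epsilon$-splitting to extract the $2^n\sqrt{\epsilon}^n\sum_k\binom{n}{k}\tau_1^{k/2}I_k$ bound, finishing with the same factorial bookkeeping after taking $\mathbb{E}_\tau$. The argument is correct and matches the paper's in all essential respects.
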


\section{Analysis of the Constrained Optimization Problem} \label{sec:AppendixB}

Fix a function $M$ that is bounded and with finite support. 
In this section, we will analyze optimization problems of the following form,
\begin{equation*}
\begin{aligned}
    & K_{\epsilon}:=\{ k: \epsilon^4 \sum_{z \in \epsilon \mathbb{Z}^4} k^2(z) = 1\},\\
    & F_{\epsilon,M}:=\{f:\epsilon^4 \sum_{z \in \epsilon \mathbb{Z}^4} \int_{\mathbb{R}^4} f^2(z,e) M(e)  \text{d}e = 1\} 
\end{aligned}
\end{equation*}
and
\begin{equation*}
\begin{aligned}
    &O_{x_1,x_2,\epsilon,M} 
    := \sup_{k \in K_{\epsilon}}\sup_{ f_1,f_2 \in F_{\epsilon,M}} \epsilon^8 \sum_{z_1 \in \epsilon \mathbb{Z}^4,z_2 \in \epsilon \mathbb{Z}^4}\\
    &\times \int_{(\mathbb{R}^4)^2} \sqrt{k}(z_1) f_1(z_1,e_1) M(e_1) P_{\tau}(z_1 + x_1+e_1 - z_2 - x_2-e_2) M(e_2) \sqrt{k}(z_2) f_2(z_2,e_2) \text{d}e_1 \text{d}e_2.
\end{aligned}
\end{equation*}
Note that $O_{x_1,x_2,\epsilon,M}$ can be understood as an upper bound for the norm of all operators of the following operator on $F_{\epsilon,M}$: 
\begin{equation*}
    (T_{x_1,x_2,\epsilon,M,k} f)(z,e) 
    = \sqrt{k}(z) \epsilon^4 \sum_{\tilde{z} \in \epsilon \mathbb{Z}^4} \int_{\mathbb{R}^4} 
    P_{\tau}(z+ x_1 + e - \tilde{z} - x_2 - \tilde{e})  M(\tilde{e}) \sqrt{k}(\tilde{z}) f(\tilde{z},\tilde{e}) \text{d} \tilde{e} . 
\end{equation*}
We thus see that,
$$
O_{x_1,x_2,\epsilon,M}= \sup_{k \in K_{\epsilon}}\sup_{f_1,f_2 \in F_{\epsilon,M}} \langle f_1, T_{x_2} f_2 \rangle.
$$
Here, $x_1$ and $x_2$ are two points found in $[-\epsilon,\epsilon]^4$. 
The continuous analogue of these quantities can be expressed as follows: 
\begin{equation*}
\begin{aligned}
     & K:=\{ k : \int_{\mathbb{R}^4} \text{d} z k^2(z) = 1\},\\
     & F_{M}:= \{ f: \int_{(\mathbb{R}^4)^2} \text{d} z \text{d} e f^2(z,e) M(e) \}
\end{aligned}
\end{equation*}
and
\begin{equation*}
\begin{aligned}
    O_{M} &:= \sup_{k \in K,f_1,f_2 \in F_M} \int_{(\mathbb{R}^4)^4} \sqrt{k}(z_1) f_1(z_1,e_1) M(e_1) P_{\tau}(z_1 + e_1 -z_2 - e_2)\\
    & \times M(e_2) \sqrt{k}(z_2) f(z_2,e_2) \text{d}z_1 \text{d}z_2 \text{d}e_1 \text{d}e_2.
\end{aligned}
\end{equation*}
We remark that this $O_M$ corresponds to the symmetric operator on $F_M$ given by,
\begin{equation*}
    (T_{M,k} f)(z,e) = \sqrt{k}(z) \int_{(\mathbb{R}^4)^2} P_{\tau}(z + e - \tilde{z}-\tilde{e}) \sqrt{k}(\tilde{z}) M(\tilde{e}) \text{d}\tilde{z} \text{d} \tilde{e}.
\end{equation*}
Thus, $O_M$ would be the same whether we took the maximum over $f_1$,$f_2$ arbitrary or $f_1=f_2$. Let $C$ be a continuous function such that $C(z) \le P_{\tau}(z) \le C(z) + P_{\tau}(z)\mathbbm{1}(|z| \le \delta)$.  
We let the quantities $O_M^{C}, O_{x_1,x_2,\epsilon,M}^{C}$ or $O_{M}^{\delta}$, $O_{x_1,x_2,\epsilon,M}^{\delta}$ denote the analogues of $O_{x_1,x_2,\epsilon,M}$ or $O_{M}$ with the function $P_{\tau}$ in the definition replaced either by $C$ or by $P_{\tau}(z) \mathbbm{1}(|z| \le \delta).$
Clearly we have that,
\begin{equation*}
    O^C_M \le O_M \le O_M^{C} + O_M^{\delta}, O_{x_1,x_2,\epsilon,M}^{C} \le O_{x_1,x_2,\epsilon,M} \le O_{x_1,x_2,\epsilon,M}^{C} + O_{x_1,x_2,\epsilon,M}^{\delta}.
\end{equation*}
We will first argue that independent of $x_1,x_2$ and $\epsilon$, that $O_{x_1,x_2,\epsilon,M}^{\delta}$ will go to $0$ as $\delta$ goes to $0$.

One way to rewrite our maximization problems is as follows. We can consider the normalization
$$
F(z) = \sqrt{\int_{\mathbb{R}^4} f^2(z,e) M(e) \text{d} e}, 
\quad
N_z(e) = \frac{f(z,e)}{F(z)}.
$$
We remark that $\int_{\mathbb{R}^4} F(z)^2 \text{d} z =1$ and for each $z$ we have $\int_{\mathbb{R}^4} (N_z(e))^2 \text{d}e$.

Next we see another way to write the integral expression that appears in $O_M$ (we have similar expressions for $O_{x_1,x_2,\epsilon,M}$) is as follows:
\begin{equation} \label{eq:Orewrite}
    \int_{(\mathbb{R}^4)^2} \text{d}z \text{d}\tilde{z} \sqrt{k}(z) F_1(z) F_2(\tilde{z}) \sqrt{k}(\tilde{z}) \int_{(\mathbb{R}^4)^2} \text{d} e  \text{d} \tilde{e} M(e) (N_z)_1(e) P_{\tau}(z+e -\tilde{z} - \tilde{e}) M(\tilde{e}) (N_z)_2(\tilde{e})
\end{equation}
We need a series of lemmas that analyze this expression based on understanding the value of the integral in the interior. We start with a lemma that derives a bound on what we will consider the `canonical' version of the problem. 

\begin{lem} \label{lem:canonprob}
Consider the following problem: 
\begin{equation} \label{eq:deffrakI}
    \mathfrak{I}:= \sup_{\substack{N_1,N_2:\\\int_{[-1,1]^4} N_i(e)^2 \text{d}e =1}} \int_{([-1,1]^4)^2} N_1(e_1)\frac{1}{|e_1 - e_2|^2} N_2(e_2) \text{d}e_1 \text{d}e_2. 
\end{equation}
Then $\mathfrak{I}$ is bounded. 

\end{lem}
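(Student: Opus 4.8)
The plan is to recognize $\mathfrak{I}$ as a bound on the $L^2([-1,1]^4)$-operator norm of the integral operator with kernel $|e_1-e_2|^{-2}$ and to control this norm by a Schur-type estimate. Since the kernel is nonnegative, for any $N_1,N_2$ with unit $L^2([-1,1]^4)$ norm the elementary inequality $N_1(e_1)N_2(e_2)\le \tfrac12\big(N_1(e_1)^2+N_2(e_2)^2\big)$ together with the symmetry $|e_1-e_2|^{-2}=|e_2-e_1|^{-2}$ gives
\begin{equation*}
\int_{([-1,1]^4)^2} N_1(e_1)\frac{1}{|e_1-e_2|^2} N_2(e_2)\, \text{d}e_1\,\text{d}e_2 \le \frac12\Big(\|N_1\|_{L^2}^2+\|N_2\|_{L^2}^2\Big)\,\sup_{e\in[-1,1]^4}\int_{[-1,1]^4}\frac{\text{d}e'}{|e-e'|^2}.
\end{equation*}
Hence it suffices to show that $S:=\sup_{e\in[-1,1]^4}\int_{[-1,1]^4}|e-e'|^{-2}\,\text{d}e'$ is finite, and then $\mathfrak{I}\le S$.

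To bound $S$, fix $e\in[-1,1]^4$. Since $[-1,1]^4$ has Euclidean diameter $2\sqrt{4}=4$, it is contained in the ball $B(e,4)$, so by translation invariance and polar coordinates in $\mathbb{R}^4$,
\begin{equation*}
\int_{[-1,1]^4}\frac{\text{d}e'}{|e-e'|^2}\le \int_{B(0,4)}\frac{\text{d}z}{|z|^2}=\omega_3\int_0^4 r^{-2}r^{3}\,\text{d}r=\omega_3\cdot 8,
\end{equation*}
where $\omega_3=2\pi^2$ is the surface measure of the unit sphere in $\mathbb{R}^4$. The integral converges precisely because the exponent $4-2=2$ of $r$ after passing to polar coordinates is strictly positive, i.e.\ the singularity $|z|^{-2}$ is locally integrable in dimension four; this is exactly where the hypothesis $d=4$ (paired with the exponent $2$, matching $\gamma=2$) is used. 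Thus $S\le 16\pi^2$ and $\mathfrak{I}\le 16\pi^2<\infty$.

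There is no substantive obstacle here: the argument is a direct Schur test, and the boundedness of the domain $[-1,1]^4$ is exactly what saves us — on all of $\mathbb{R}^4$ the corresponding operator is the endpoint Riesz potential $I_2$, which is \emph{not} bounded on $L^2$. The only point requiring a moment's care is to keep the constant independent of everything except the dimension, which is automatic from the computation above; this uniformity is what will be needed when $\mathfrak{I}$ is invoked to control the rescaled and localized versions $O^{\delta}_{x_1,x_2,\epsilon,M}$ later in this appendix.
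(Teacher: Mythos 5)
Your proof is correct and takes a genuinely different route from the paper. You argue directly by a Schur test: since the kernel $|e_1-e_2|^{-2}$ is nonnegative and symmetric, the pointwise bound $N_1(e_1)N_2(e_2)\le \tfrac12\big(N_1(e_1)^2+N_2(e_2)^2\big)$ reduces everything to showing that $S:=\sup_{e\in[-1,1]^4}\int_{[-1,1]^4}|e-e'|^{-2}\,\text{d}e'$ is finite, which follows from the local integrability of $|z|^{-2}$ in $\mathbb{R}^4$ and the boundedness of the domain; you even extract the explicit constant $\mathfrak{I}\le 16\pi^2$. The paper, by contrast, argues by contradiction and probabilistically: supposing $\mathfrak{I}=\infty$, it takes near-maximizers $N_1^B,N_2^B$, plugs them into the lower-bound variational machinery of Section~\ref{sec:lwrbndbrown} applied to $\mathcal{I}=\int_0^{\tau_1}\int_0^{\tau_2}(I*I)(B_t-B'_s)\,\text{d}t\,\text{d}s$ with $I=\mathbbm{1}_{[-1,1]^4}$, and derives that $\liminf_n \tfrac1n\log\tfrac{1}{(n!)^2}\mathbb{E}[\mathcal{I}^n]=\infty$, contradicting the trivial upper bound coming from the boundedness of $I*I$. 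Your argument is more elementary and self-contained, gives a quantitative constant, and does not rely on the rest of the paper; the paper's argument, while indirect and noneffective, exhibits $\mathfrak{I}$ as yet another instance of the exact same variational-vs-moments dichotomy that drives the whole paper, at the cost of importing the Le Gall--Feynman--Kac lower-bound machinery into what is logically a simple real-analysis fact.
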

\begin{proof}
Assume for contradiction that $\mathfrak{I}$ is not bounded. Then we can find a sequence of functions $N^{B}_1$ and $N^B_2$ supported on $[-1,1]^4$ and $L^2$ norm 1 such that 
\begin{align} \label{def:NB}
    &\int_{([-1,1]^4)^2} N^B_1(e_1) P_{\tau}(e_1 - e_2) N^B_2(e_2) \text{d}e_1 \text{d} e_2\\
    \notag
     &\ge c \int_{([-1,1]^4)^2} N^B_1(e_1) \frac{1}{|e_1 - e_2|^2} N^B(e_2) \text{d}e_1 \text{d} e_2 \ge B,
\end{align}
where $c$ is a constant so that $P_{\tau}(e_1 -e_2) \ge c \frac{1}{|e_1 - e_2|^2}$, when $e_1,e_2$ are supported in $[-1,1]^4$.

Now consider the large deviation of the following quantity. Let $I$ denote the indicator function $I(x):= \mathbbm{1}(x \in [-1,1]^4)$ and 
\begin{equation*}
   \mathcal{I}:= \int_{0}^{\tau_1} \int_{0}^{\tau_2} (I*I)(B_t - B'_s) \text{d}t \text{d}s.
\end{equation*}

Since the convolution $I*I$ is bounded, we can see that $\frac{1}{n} \log \frac{1}{(n!)^2} \mathbb{E}[(\mathcal{I})^n] < \infty$. 
However, similar to the proof given in Section \ref{sec:lwrbndbrown}, we can prove that
\begin{equation} \label{eq:lwerbndI}
\begin{aligned}
& \liminf_{n\to \infty}\frac{1}{n} \log \frac{1}{(n!)^2} \mathbb{E}[(\mathcal{I})^n] \\
&\ge \sup_{\substack{
\int_{\mathbb{R}^4} k^2(z) \text{d} z =1\\
\int_{(\mathbb{R}^4)^2} f_i^2(z,e) I(e) \text{d}z \text{d} e=1} } \int_{(\mathbb{R}^4)^4} \sqrt{k(z_1)} f_1(z_1,e_1) I(e_1) P_{\tau}(z_1 +e_1 - z_2 - e_2)\\
&  \hspace{3 cm} \times I(e_2) f_2(z_2,e_2) \sqrt{k(z_2)} \text{d}z_1 \text{d}z_2 \text{d}e_1 \text{d}e_2.
\end{aligned}
\end{equation}
Now, we choose $f_i(z_i,e_i)$ of the following form.  If $|z_i| \le \frac{1}{2}$, then we write $f_i(z_i,e_i)$ as $F(z_i) N^B_i(e_i + z_i)$, where $F$ supported on $\left[-\frac{1}{2}, \frac{1}{2}\right]^4$ is a function with norm $1$, namely, $\int_{\mathbb{R}^4} F^2(z) \text{d}z =1$ and $N^B_i$ is the function from \eqref{def:NB}.  One can manifestly see that $\int_{(\mathbb{R}^4)^2} F^2(z) (N^B_i(e))^2 I(e) \text{d} e=1$ by definition. We also fix $k$ to be some function with $L^2$ norm 1. 
With this choice of $k$ and $f_1,f_2$, we see that 
\begin{equation*}
\begin{aligned}
    &\int_{(\mathbb{R}^4)^2} \text{d}z_1 \text{d}z_2 F(z_1) \sqrt{k}(z_1) F(z_2) \sqrt{k}(z_2)\\
    & \times \int_{(\mathbb{R}^4)^2} \text{d}e_1 \text{d}e_2 N_1^B(e_1 +z_1) I(e_1) P_{\tau}(e_1 + z_1 - e_2 - z_2) N^B_2(e_2  +z_2) I(e_2)\\
    & \ge B \int_{(\mathbb{R}^4)^2} \text{d}z_1 \text{d}z_2 F(z_1) \sqrt{k}(z_1) F(z_2) \sqrt{k}(z_2).
\end{aligned}
\end{equation*}
The last inequality merely follows from the condition on $N_i^B$ for \eqref{def:NB}, 
once you use the observation that $|e_1 +z_1|$ will be $<2$ when $|e_1| <1$ and $|z_1|<\frac{1}{2}$. Thus, in the domain of relevance, $I(e_i)$ will just be $1$.  
We can freely take $B$ to $\infty$ while keeping $F$ and $k$ fixed. Thus, the supremum in \eqref{eq:lwerbndI} will be $\infty$. This contradicts that the fact that said supremum should be finite. Thus, it must be the case that the quantity $I$, of interest, must be finite. 

\end{proof}

We can now proceed to relate the more general problem to a bound on the canonical problem. 

\begin{lem}
Assume that $M$ has support $[-S,S]^4$ and that $M$ is bounded from above  by $B$ and from below by $b$ on this support. Then we have the following estimates:
If $|z_1 -z_2| \ge   4 \sqrt{d} |S|$, we have that
\begin{equation*}
\begin{aligned}
    &\sup_{\substack{N_1,N_2: \\ \int_{\mathbb{R}^4} N_i^2(e) M(e) \text{d} e =1}}
\int_{(\mathbb{R}^4)^2} N_1(e_1) M(e) P_{\tau}(z_1 + e_1 - z_2 - e_2) N_2(e_2) M(e_2) \text{d}e_1 \text{d}e_2 \\ 
    & \le  P_{\tau}(\frac{z_1 - z_2}{2}) \sqrt{B[2S]^4}.
\end{aligned}
\end{equation*}

If instead, we assume that $|z_1 -z_2| \le 4\sqrt{d} S$, we have that, 

\begin{equation} \label{eq:z1z2close}
    \begin{aligned}
    &\sup_{\substack{N_1,N_2: \\ \int_{\mathbb{R}^4} N_i^2(e) M(e) \text{d} e =1}} \int_{(\mathbb{R}^4)^2} N_1(e_1) M(e) P_{\tau}(z_1 + e_1 - z_2 - e_2) N_2(e_2) M(e_2) \text{d}e_1 \text{d}e_2   \le\frac{B^2}{b(3S)^2} \mathfrak{I}. 
\end{aligned}
\end{equation}
$\mathfrak{I}$ is the quantity from \eqref{eq:deffrakI}.  
\end{lem}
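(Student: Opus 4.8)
The plan is to handle the two regimes by different mechanisms: the far regime by a pointwise bound on $P_\tau$, and the near regime by a translation-and-dilation reduction to the canonical quantity $\mathfrak{I}$ of Lemma \ref{lem:canonprob}.

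\textbf{The far regime, $|z_1 - z_2| \ge 4\sqrt d\,|S|$.} The only feature of $P_\tau$ that enters is that it is radially symmetric and radially non-increasing; this is immediate from $P_\tau(w) = \int_0^\infty e^{-t} p_t(w)\,\text{d}t$ being a positive superposition of the radially non-increasing Gaussian kernels $p_t$. Since $M$ is supported in $[-S,S]^4$, on the support of the integrand one has $|e_1|, |e_2| \le \sqrt d\,S$, so $|z_1 + e_1 - z_2 - e_2| \ge |z_1 - z_2| - 2\sqrt d\,S \ge \tfrac12 |z_1 - z_2|$, whence $P_\tau(z_1 + e_1 - z_2 - e_2) \le P_\tau\!\big(\tfrac{z_1-z_2}{2}\big)$. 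Pulling this factor out, the remaining integral factors as $\big(\int N_1 M\big)\big(\int N_2 M\big)$, and Cauchy--Schwarz together with $\int N_i^2 M = 1$ bounds each factor by $\big(\int_{[-S,S]^4} M\big)^{1/2} \le (B[2S]^4)^{1/2}$, which gives the asserted estimate.

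\textbf{The near regime, $|z_1 - z_2| \le 4\sqrt d\,S$.} Here I would first use the two-sided control of $M$: $M(e_i) \le B$ extracts a factor $B^2$, while $M \ge b$ on $[-S,S]^4$ together with $\int N_i^2 M = 1$ gives $\|N_i\|_{L^2([-S,S]^4)}^2 \le 1/b$. Next dominate $P_\tau$ by the Green's function, $P_\tau(w) \le \int_0^\infty p_t(w)\,\text{d}t = G(w) = \tfrac{1}{2\pi^2 |w|^2}$, so it remains to bound
\begin{equation*}
\int_{([-S,S]^4)^2} N_1(e_1)\,\frac{1}{|(z_1 - z_2) + e_1 - e_2|^2}\, N_2(e_2)\,\text{d}e_1\,\text{d}e_2
\end{equation*}
in terms of $\mathfrak{I}$. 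The reduction is by a translation and a dilation: substitute $u_1 = e_1 + (z_1 - z_2)$ so the kernel becomes $|u_1 - e_2|^{-2}$; since $|z_1 - z_2| \le 4\sqrt d\,S$ both integration variables now lie in a fixed cube comparable to $[-S,S]^4$, and rescaling that cube to $[-1,1]^4$ turns the integral into the one in \eqref{eq:deffrakI}. The Jacobian of the dilation, the $(-2)$-homogeneity of the kernel, and the induced change in the $L^2$ normalizations of the rescaled $N_i$ combine into a constant of the form displayed in \eqref{eq:z1z2close}; after restoring the factors $B^2$ and $\tfrac{1}{2\pi^2}$ one obtains that bound, the residual supremum being $\le \mathfrak{I}$, finite by Lemma \ref{lem:canonprob}.

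There is no genuine obstacle: the only input that could have failed, finiteness of the bilinear form of $|x|^{-2}$ on $L^2([-1,1]^4)$, is precisely Lemma \ref{lem:canonprob}, already established through the large-deviation comparison with $\mathcal{I}$. The one point requiring care is the bookkeeping in the near regime --- choosing the dilation cube large enough to contain the translated support $[-S,S]^4 + (z_1 - z_2)$ (which fixes the numerical constant attached to $S$) and then checking that the Jacobian, the kernel homogeneity, and the transformed $L^2$ norms assemble into exactly the advertised constant. The same two-step translation-and-dilation argument applies verbatim when $P_\tau$ is replaced by its continuous lower bound $C$ or by $P_\tau \mathbbm{1}(|\cdot| \le \delta)$, as needed for the companion quantities $O^C$ and $O^\delta$.
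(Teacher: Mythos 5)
Your proof matches the paper's argument essentially step for step: in the far regime the same geometric observation ($|z_1-z_2+e_1-e_2|\ge\frac12|z_1-z_2|$ on the support of $M\otimes M$) plus radial monotonicity of $P_\tau$ followed by Cauchy--Schwarz, and in the near regime the same translate-then-dilate reduction to $\mathfrak{I}$ after extracting $B^2$ from above and $1/b$ from below and dominating $P_\tau$ by $|w|^{-2}$. One shared arithmetic slip worth noting: multiplying the two Cauchy--Schwarz factors $\int N_i M \le (B[2S]^4)^{1/2}$ yields $B[2S]^4$, not $\sqrt{B[2S]^4}$ as written in the lemma and in your last sentence of that paragraph; the paper's proof contains the same inconsistency, and since the bound is only used as ``a constant depending on $M$ alone,'' it is harmless, but you reproduced the slip rather than catching it.
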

\begin{proof}
Let us consider the case that $|z_1 - z_2| \ge 4 \sqrt{d}|S| $.  
In this case, we can assert that for any $e_1,e_2$ in the support of $S$, we have that $|z_1 - z_2 + e_1 -e_2| \ge \frac{|z_1 - z_2|}{2}$. $P_{\tau}$ depends only of the norm of its input and is monotone decreasing in its input, thus, $P_{\tau}(z_1 - z_2 + e_1 - e_2) \le P_{\tau}(\frac{z_1-z_2}{2})$ when $e_1$ and $e_2$ are in the support of $M$.

Secondly, we also know that,
\begin{equation*}
    \int_{\mathbb{R}^4 }N_1(e) M(e) \text{d}e \le [\int_{\mathbb{R}^4} (N_1(e))^2 M(e) \text{d}e]^{1/2} [\int_{\mathbb{R}^4} M(e) \text{d} e]^{1/2} \le \sqrt{B[2S]^4}.
\end{equation*}
Because $M$ is bounded from above by $B$, we can then assert that,
 \begin{equation*}
 \begin{aligned}
     & \sup_{\substack{N_1,N_2:\\ \int_{\mathbb{R}^4} N_i^2(e) M(e) \text{d} e =1}} \int_{(\mathbb{R}^4)^2} N_1(e_1) M(e_1) P_{\tau}(z_1 + e_1 - z_2 - e_2) M(e_2) N_2(e_2) \text{d}e_1
     \text{d}e_2 \\
     & \le P_{\tau}(\frac{z_1-z_2}{2})\sup_{\substack{N_1,N_2:\\ \int_{\mathbb{R}^4} N_i^2(e) M(e) \text{d} e =1}} \int_{\mathbb{R}^4} N_1(e_1) M(e_1) \text{d}e_1 \int_{\mathbb{R^4}} N_2(e_2) M(e_2) \text{d} e_2\\
     & \le P_{\tau}(\frac{z_1-z_2}{2}) \sqrt{B[2S]^4}.
\end{aligned}
 \end{equation*}
 If instead, we assume that $|z_1 -z_2| \le 4 \sqrt{d}|S|$, then we instead know that $|z_1 +z_2 - e_1 - e_2| \le 6 \sqrt{d}|S|$ and that 
 \begin{equation*}
    \begin{aligned}
     & \sup_{\substack{N_1,N_2:\\ \int_{\mathbb{R}^4} N_i^2(e) M(e) \text{d} e =1}} \int_{(\mathbb{R}^4)^2} N_1(e_1) M(e_1)\times P_{\tau}(z_1 + e_1 - z_2 - e_2) M(e_2) N_2(e_2) \text{d}e_1
     \text{d}e_2\\
     & = \sup_{\substack{N_1,N_2:\\ \int_{\mathbb{R}^4} N_1^2(e-z_1 +z_2) M(e-z_1 + z_2) \text{d} e =1\\
     \int_{\mathbb{R}^4} N_2^2(e) M(e) \text{d} e =1}} \int_{(\mathbb{R}^4)^2} N_1(e_1 - z_1 +z_2) M(e_1- z_1 + z_2)\\
     & \hspace{3 cm} \times P_{\tau}(e_1 -e_2) M(e_2) N_2(e_2) \text{d}e_1
     \text{d}e_2\\
     & \le B^2 \sup_{\substack{\tilde{N}_1,\tilde{N}_2:\\  b \int_{[-3S, 3S]^4} \tilde{N}_i^2(e) \text{d} e =1}} \int_{([-3S, 3S]^4)^2} \text{d}e_1 \text{d}e_2 \tilde{N}_1(e_1) \times P_{\tau}(e_1 - e_2) \tilde{N}_2(e_2)\\
     &=  \frac{B^2(3S)^2}{b} \sup_{\substack{\tilde{N}_1,\tilde{N}_2:\\ \int_{[-1,1]^4}\tilde{N}_1^2(e)\text{d}e =1  }} \int_{[-1,1]^4} \text{d}e_1 \text{d}e_2 \tilde{N}_1(e_1) \frac{1}{|e_1- e_2|^2} \tilde{N}_2(e_2). 
     \end{aligned}
 \end{equation*}
 To get the second line above, we changed variable of $e_1 \to e_1 -z_1 + z_2$. 
 We need to make more observations to derive the second to last line. Firstly, one can use the assumption that $M$ is bounded by $B$. This gives the factor of $B^2$ outside. 
 Secondly, the support of the shifted functions $N_1(e_1 - z_1 +z_2)$ must be restricted to the domain $[-3S,3S]^4$. 
 The support of $N_2(e_2)$ must also be restricted to this domain. Thus, we can increase the domain of integration to $[-3S,3S]^4$. Finally, since $M$ is bounded below by $b$, we would also know that $b\int_{[-S,S]^4} N_1^2(e) \text{d} e \le 1 $. By assumption, the functions $N_1$ would also have support restricted to $[-S,S]^4$; thus, $b\int_{[-3S,3S]^4} N_1^2(e) \text{d} e \le 1 $.  
 Hence, the value of the integral could only increase if we considered functions like $\tilde{N}_i$. 

In addition, the final line is derived by bounding $P_{\tau}(e_1- e_2)$ by $|e_1 - e_2|^{-2}$ and using scaling and we obtain the result. 
 
\end{proof}

By a very similar technique, we can also prove the following estimates,
\begin{lem} \label{lem:Mdeltrest}

Assume that $M$ has support $[-S,S]^4$ and that $M$ is bounded from above  by $B$ and from below by $b$ on this support. Then we have the following estimates:
if $|z_1 -z_2| \ge   4 \sqrt{d} |S| + 2 \delta $, we have that
\begin{equation*}
\begin{aligned}
    &\sup_{\substack{N_1,N_2: \\ \int_{\mathbb{R}^4} N_i^2(e) M(e) \text{d} e =1}} \int_{(\mathbb{R}^4)^2} N_1(e_1) M(e) P_{\tau}(z_1 + e_1 - z_2 - e_2) \\
    & \hspace{2 cm} \times \mathbbm{1}(z_1 + e_1 - z_2 - e_2 \le \delta) N_2(e_2) M(e_2) \text{d}e_1 \text{d}e_2 = 0.
\end{aligned}
\end{equation*}

If instead, we had that $|z_1 - z_2| \le 4 \sqrt{d}|S| + 2 \delta$, we can instead derive following estimates, for universal constant $C$ (only depending on $M$): 
\begin{equation*} 
\begin{aligned}
    & \sup_{\substack{N_1,N_2: \\ \int_{\mathbb{R}^4} N_i^2(e) M(e) \text{d} e =1}} \int_{(\mathbb{R}^4)^2} N_1(e_1) M(e) P_{\tau}(z_1 + e_1 - z_2 - e_2) \\
    & \hspace{2 cm} \times \mathbbm{1}(z_1 + e_1 - z_2 - e_2 \le \delta) N_2(e_2) M(e_2) \text{d}e_1 \text{d}e_2 \le C \delta^2. 
\end{aligned}
\end{equation*}

\end{lem}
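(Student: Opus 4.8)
The plan is to follow the proof of the preceding lemma almost verbatim, carrying the extra factor $\mathbbm{1}(|z_1+e_1-z_2-e_2|\le\delta)$ through the same reductions, and to replace the appeal to the finite constant $\mathfrak{I}$ by a direct estimate for the truncated Riesz kernel $|e_1-e_2|^{-2}\mathbbm{1}(|e_1-e_2|\le\delta)$, which is quantitatively of order $\delta^2$.

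The first assertion is purely geometric. Since $M$ is supported in $[-S,S]^4$, the integrand is supported on $\{e_1,e_2\in[-S,S]^4\}$, where $|e_1|,|e_2|\le\sqrt{d}\,S$ and hence $|e_1-e_2|\le 2\sqrt{d}\,S$. Therefore, if $|z_1-z_2|\ge 4\sqrt{d}|S|+2\delta$, then
\begin{equation*}
|z_1+e_1-z_2-e_2| \ge |z_1-z_2|-|e_1-e_2| \ge 4\sqrt{d}\,S+2\delta-2\sqrt{d}\,S > \delta
\end{equation*}
throughout that region, so $\mathbbm{1}(|z_1+e_1-z_2-e_2|\le\delta)\equiv 0$ there and the supremum is $0$.

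For the second assertion I would argue directly. First bound $P_\tau(z)\le G(z)=\frac{1}{2\pi^2|z|^2}$ — this follows from $P_\tau(x)=\int_0^\infty e^{-t}p_t(x)\,\text{d}t\le\int_0^\infty p_t(x)\,\text{d}t=G(x)$ and the explicit four-dimensional Green's function — so the quantity to be estimated is at most $\frac{1}{2\pi^2}$ times
\begin{equation*}
\int_{(\mathbb{R}^4)^2} N_1(e_1)M(e_1)\,\frac{\mathbbm{1}(|z_1+e_1-z_2-e_2|\le\delta)}{|z_1+e_1-z_2-e_2|^2}\,N_2(e_2)M(e_2)\,\text{d}e_1\,\text{d}e_2.
\end{equation*}
Change variables $e_1\mapsto e_1-z_1+z_2$; the truncation is unaffected since it depends only on $e_1-e_2$, and $M\le B$ pulls out a factor $B^2$. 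The constraint $\int N_i^2 M=1$ together with $M\ge b$ on its support gives $\|N_1\|_{L^2}^2,\|N_2\|_{L^2}^2\le b^{-1}$, so the whole claim reduces to the operator-norm estimate
\begin{equation*}
\sup_{\|N_1\|_{L^2}=\|N_2\|_{L^2}=1}\ \int_{(\mathbb{R}^4)^2}N_1(e_1)\,\frac{\mathbbm{1}(|e_1-e_2|\le\delta)}{|e_1-e_2|^2}\,N_2(e_2)\,\text{d}e_1\,\text{d}e_2\ \le\ C\delta^2 .
\end{equation*}

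This last inequality is the quantitative heart, and it follows from the Schur test: the kernel $k(e_1,e_2)=|e_1-e_2|^{-2}\mathbbm{1}(|e_1-e_2|\le\delta)$ is symmetric and nonnegative, so the norm of the associated operator on $L^2(\mathbb{R}^4)$ is at most $\sup_{e_1}\int_{\mathbb{R}^4}k(e_1,e_2)\,\text{d}e_2=\int_{|u|\le\delta}|u|^{-2}\,\text{d}u=2\pi^2\int_0^\delta r\,\text{d}r=\pi^2\delta^2$, where $2\pi^2$ is the surface area of the unit sphere in $\mathbb{R}^4$. Collecting constants gives $\frac{B^2}{b}\cdot\frac{1}{2\pi^2}\cdot\pi^2\delta^2=\frac{B^2}{2b}\delta^2$, so $C=\frac{B^2}{2b}$ depends only on $M$; in fact this bound holds for all $z_1,z_2$, and the restriction $|z_1-z_2|\le 4\sqrt{d}|S|+2\delta$ is retained only because far-apart pairs contribute literally $0$. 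There is no serious obstacle here: the argument is a routine adaptation of the preceding lemma, and the only points needing care are that the truncation survives the change of variables (it does, being a function of $e_1-e_2$) and that the resulting constant is manifestly uniform in $z_1,z_2$; the Schur-test computation is what supplies the crucial $\delta^2$ decay.
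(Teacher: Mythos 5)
Your proof is correct, and the second half takes a genuinely different and more direct route than the paper. For the case $|z_1-z_2|\le 4\sqrt{d}|S|+2\delta$, the paper discretizes $\mathbb{R}^4$ into cubes of side $\sim\delta$, applies the bound from the preceding lemma on each cube (which refers back to the quantity $\mathfrak{I}$ defined in Lemma \ref{lem:canonprob}, itself proved finite via a probabilistic contradiction argument), and sums the cube contributions with Cauchy--Schwarz; the final constant carries a factor of $\mathfrak{I}$. You instead bound $P_\tau$ by the Green's function, shift variables so the kernel is a function of $e_1-e_2$, absorb $M$ into the normalization via $b\le M\le B$, and then apply the Schur test directly to the truncated Riesz kernel $|u|^{-2}\mathbbm{1}(|u|\le\delta)$, obtaining the explicit constant $C=B^2/(2b)$ with no reference to $\mathfrak{I}$. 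Your argument is shorter, yields a fully explicit constant, and is manifestly uniform in $z_1,z_2,\epsilon$; in fact the same Schur test applied on $[-1,1]^4$ would give a one-line proof that $\mathfrak{I}<\infty$, making Lemma \ref{lem:canonprob} redundant for this purpose. The paper's approach, by contrast, reuses the $\mathfrak{I}$-machinery already set up but pays for it with a heavier decomposition. Both are valid; yours is the more economical argument for this particular lemma.
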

\begin{proof}
When $|x_1 - x_2| \ge |z_1 - z_2| + 2\delta$, the assertion is clear. We now need to consider the case that $|z_1 - z_2| \le 2 \sqrt{d}|S| + 2 \delta$. 

Consider the scaled integer lattice $\mathbb{Z}^4_{\delta}:=\frac{\delta}{\sqrt{d}} \mathbb{Z}^4 $. For each $e$ in $[-S,S]^4$, let $m$ be the closest integral point in $\mathbb{Z}^4_{\delta}$ to $e$. 
Hence, $|m_1 -e_1| \le \delta$. If $|z_1 - z_2 + e_1 - e_2 |\le \delta$, then we must have that $|m_1 - (e_2 + z_2 -z_1) | \le 2 \delta$.
Thus, we can make the following integral bound,
\begin{equation}  \label{eq:splitbycube}
\begin{aligned}
    &  \int_{(\mathbb{R}^4)^2} N_1(e_1) M(e) P_{\tau}(z_1 + e_1 - z_2 - e_2) \\
    & \hspace{ 2 cm}\mathbbm{1}(z_1 + e_1 - z_2 - e_2) \le \delta) N_2(e_2) M(e_2) \text{d}e_1 \text{d}e_2\\
    & \le \sum_{\substack{ k \in \mathbb{Z}^4_{\delta}\\ |k| \le 2 \sqrt{d}|S|} } 
    \int_{([-\frac{2\delta}{\sqrt{d}}, \frac{2\delta}{\sqrt{d}}]^4)^2} \text{d}\hat{e}_1 \text{d} \hat{e}_2 N_1(k_1 + \hat{e}_1) M(k_1 + \hat{e}_1)\\
    & \times P_{\tau}(\hat{e}_1 -\hat{e}_2) N_2(z_2 + \hat{e}_2 - z_1 - k_1)  M(z_2 + \hat{e}_2 -z_1 - k_1).
\end{aligned}
\end{equation}
The second line chooses the change of variables $e_1 \to k_1 +\hat{e}_1$ and $e_2 \to \hat{e}_2 + z_2 - z_1 - k_1$. The restriction that $|z_1 + e_1 -z_2 - e_2| \le \delta$ will ensure that any tuple $(e_1,e_2)$ that corresponds to a non-zero term in the integral on the left hand side will belong to one of the squares on the right hand side. 

We now define,
$$
\begin{aligned}
&V_1(k):=\int_{[-\frac{2\delta}{\sqrt{d}}, \frac{2\delta}{\sqrt{d}}]^4} N_1(k + \hat{e})^2 M(k_1 + \hat{e}) \text{d} \hat{e},\\
& V_2(k):= \int_{[-\frac{2\delta}{\sqrt{d}}, \frac{2\delta}{\sqrt{d}}]^4} N_2(z_2 + \hat{e} -z_1 -k )^2 M(z_2 +\hat{e} - z_1 -k)M(z_2 + \hat{e} -z_1 - k) \text{d} \hat{e}.
\end{aligned}
$$
One observation we will use is that $\sum_{k \in \mathbb{Z}^4_{\delta}} V_i(k) \le 4$ if we assume that $\int_{(\mathbb{R})^4} (N_i(e))^2 \text{d} e =1$.  
Returning to the last line of \eqref{eq:splitbycube} and using the inequality of \eqref{eq:z1z2close}, we can derive the desired bound,
\begin{equation*}
    \begin{aligned}
            &  \int_{(\mathbb{R}^4)^2} N_1(e_1) M(e) P_{\tau}(z_1 + e_1 - z_2 - e_2) \\
     & \hspace{ 2 cm} \mathbbm{1}(z_1 + e_1 - z_2 - e_2) \le \delta) N_2(e_2) M(e_2) \text{d}e_1 \text{d}e_2\\
     \le& \sum_{\substack{ k \in \mathbb{Z}^4_{\delta}\\ |k| \le 2 \sqrt{d}|S|} } \sqrt{V_1(k)} \sqrt{V_2(k)}\frac{B^2 (3 \delta)^2}{bd}  \mathfrak{I} \le \frac{B^2 (3 \delta)^2}{bd} \mathfrak{I} (\sum_{k \in \mathbb{Z}^4_{\delta}} V_1(k)) (\sum_{k \in \mathbb{Z}^4_{\delta}} V_2(k)) \le \frac{16B^2(3\delta)^2}{bd} \mathfrak{I}. 
    \end{aligned}
\end{equation*}
\end{proof}

As a corollary of this estimate, we can establish the following, 

\begin{cor} \label{cor:awayfrom0}
Let $M$ be a function of finite support that is bounded from above and away from $0$ on its support. 
Uniformly in $x_1,x_2$ and $\epsilon$, we have that,
\begin{equation*}
    O^{\delta}_{x_1,x_2,\epsilon,M} \le C(\delta),
\end{equation*}
where $C(\delta)$ is a constant that, depends on $M$ but not on $x_1,x_2$ or $\epsilon$, and goes to $0$ as $\delta$ goes to $0$.
\end{cor}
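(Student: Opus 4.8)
The plan is to reduce $O^{\delta}_{x_1,x_2,\epsilon,M}$ to a combinatorial bilinear sum over the lattice $\epsilon\mathbb{Z}^4$, to which Lemma \ref{lem:Mdeltrest} applies directly, and then to control that sum by an arithmetic–geometric mean split balanced against a Schur-test estimate. First I would separate the $e$-variables as in the discussion around \eqref{eq:Orewrite}: given $f_i\in F_{\epsilon,M}$, write, for each $z\in\epsilon\mathbb{Z}^4$ with $\int_{\mathbb{R}^4} f_i^2(z,e)M(e)\,\text{d}e>0$,
\begin{equation*}
F_i(z):=\Big(\int_{\mathbb{R}^4} f_i^2(z,e)M(e)\,\text{d}e\Big)^{1/2},\qquad N^{i}_{z}(e):=\frac{f_i(z,e)}{F_i(z)},
\end{equation*}
so that $\epsilon^4\sum_z F_i(z)^2=1$ and $\int_{\mathbb{R}^4}(N^{i}_{z}(e))^2 M(e)\,\text{d}e=1$ (sites with $F_i(z)=0$ contribute nothing). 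For fixed $z_1,z_2$ the inner $e$-integral occurring in $O^{\delta}_{x_1,x_2,\epsilon,M}$ equals $F_1(z_1)F_2(z_2)$ times a quantity of exactly the shape estimated in Lemma \ref{lem:Mdeltrest}, but with the two points there taken to be $z_1+x_1$ and $z_2+x_2$. Taking $[-S,S]^4$ to contain the support of $M$, writing $R:=4\sqrt d\,S+2\delta$, and letting $C$ denote a constant depending only on $M$, that lemma gives
\begin{equation*}
\begin{aligned}
&\int_{(\mathbb{R}^4)^2} N^{1}_{z_1}(e_1)M(e_1)\,P_\tau\big(z_1+x_1+e_1-z_2-x_2-e_2\big)\\
&\qquad\times\mathbbm{1}\big(|z_1+x_1+e_1-z_2-x_2-e_2|\le\delta\big)\,M(e_2)N^{2}_{z_2}(e_2)\,\text{d}e_1\text{d}e_2\ \le\ C\delta^2\,\mathbbm{1}_R(z_1,z_2),
\end{aligned}
\end{equation*}
where $\mathbbm{1}_R(z_1,z_2):=\mathbbm{1}(|z_1+x_1-z_2-x_2|\le R)$, and hence
\begin{equation*}
O^{\delta}_{x_1,x_2,\epsilon,M}\ \le\ C\delta^2\ \sup\ \epsilon^8\sum_{z_1,z_2\in\epsilon\mathbb{Z}^4}\sqrt{k}(z_1)F_1(z_1)\,\mathbbm{1}_R(z_1,z_2)\,F_2(z_2)\sqrt{k}(z_2),
\end{equation*}
the supremum being over $k\in K_\epsilon$ and nonnegative $F_1,F_2$ with $\epsilon^4\sum_z F_i(z)^2=1$.

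Next I would bound this lattice sum. For any $L>0$, the inequality $\sqrt k(z_1)F_1(z_1)\sqrt k(z_2)F_2(z_2)\le \tfrac{L}{2} F_1(z_1)^2F_2(z_2)^2+\tfrac{1}{2L}k(z_1)k(z_2)$ splits the sum in two. Dropping $\mathbbm{1}_R\le1$ and using $\epsilon^4\sum_z F_i(z)^2=1$, the first piece is at most $\tfrac{L}{2}\,\epsilon^8\big(\sum_{z_1}F_1(z_1)^2\big)\big(\sum_{z_2}F_2(z_2)^2\big)=\tfrac{L}{2}$. The second piece is $\tfrac{1}{2L}$ times $\epsilon^8\sum_{z_1,z_2}k(z_1)\mathbbm{1}_R(z_1,z_2)k(z_2)=\langle k,\mathcal Ak\rangle$, where $(\mathcal A g)(z_1):=\epsilon^4\sum_{z_2}\mathbbm{1}_R(z_1,z_2)g(z_2)$ acts on $\ell^2(\epsilon\mathbb{Z}^4)$ with inner product $\langle g,h\rangle=\epsilon^4\sum_z g(z)h(z)$; by the Schur test, since both the row and column sums of $\mathbbm{1}_R$ count points of $\epsilon\mathbb{Z}^4$ in a ball of radius $R$,
\begin{equation*}
\|\mathcal A\|\ \le\ \sup_{z_1}\epsilon^4\,\#\{z_2\in\epsilon\mathbb{Z}^4:|z_2-(z_1+x_1-x_2)|\le R\}\ \le\ (2R+\epsilon)^4,
\end{equation*}
uniformly in $x_1,x_2$, so (using $\epsilon^4\sum_z k(z)^2=1$) the second piece is at most $\tfrac{1}{2L}(2R+\epsilon)^4$. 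Taking $L=(2R+\epsilon)^2$ bounds the lattice sum by $(2R+\epsilon)^2$, and therefore $O^{\delta}_{x_1,x_2,\epsilon,M}\le C\delta^2(2R+\epsilon)^2=C\delta^2\big(8\sqrt d\,S+4\delta+\epsilon\big)^2$. In the regime used in Lemma \ref{lem:removeepsilon}, where $\epsilon\to0$ so in particular $\epsilon\le R$, the right side is at most $9C\delta^2R^2=9C\delta^2(4\sqrt d\,S+2\delta)^2$, which for $\delta\le1$ is bounded by a quantity $C(\delta)$ depending only on $M$ with $C(\delta)\to0$ as $\delta\to0$; it is independent of $x_1,x_2,\epsilon$, which gives the corollary.

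The main obstacle is the uniformity in $\epsilon$. Because $O^{\delta}_{x_1,x_2,\epsilon,M}$ maximizes over $k$ and over $f_1,f_2$ \emph{independently}, one cannot simply apply Cauchy--Schwarz to $k$ against $F_i^2$: that would leave factors like $\big(\epsilon^4\sum_z F_i(z)^4\big)^{1/2}$, which blow up as $\epsilon\to0$ (take $k,f_1,f_2$ all concentrated at a single lattice site, where $F_i(z)^2\sim\epsilon^{-4}$ and $k(z)\sim\epsilon^{-2}$). The arithmetic--geometric mean split with the free parameter $L$, tuned against the Schur bound for $\mathbbm{1}_R$, is precisely what keeps the estimate $\epsilon$-uniform; by contrast, the reduction through Lemma \ref{lem:Mdeltrest} and the lattice-point count behind the Schur bound are routine.
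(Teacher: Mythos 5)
Your argument is correct, and it takes a genuinely different route through the reduction of the bilinear lattice sum. Both proofs begin by factoring $f_i(z,e)=F_i(z)N^i_z(e)$ and invoking Lemma~\ref{lem:Mdeltrest} to replace the $e$-integral by $C\delta^2\,\mathbbm{1}_R(z_1,z_2)$. From there the paper notices that the Hölder inequality (with exponents $3$ and $3/2$) gives an $L^{4/3}$-type bound on $G_i:=\sqrt{k}F_i$, namely $\epsilon^4\sum_z G_i(z)^{4/3}\le 1$, and then applies Cauchy--Schwarz twice to the sum $\epsilon^8\sum G_1\,\mathbbm{1}_R\,G_2$, deferring the lattice-point count to the inner factor $\epsilon^4\sum_y\mathbbm{1}(|y|\le R)\le(2R+O(\epsilon))^4$. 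You instead keep $k$ and the $F_i$ apart, use the Young-type split $\sqrt{k}F_1\sqrt{k}F_2\le\tfrac{L}{2}F_1^2F_2^2+\tfrac{1}{2L}k\cdot k$, bound the $F$-piece trivially by dropping $\mathbbm{1}_R$, bound the $k$-piece by the Schur test for the operator with kernel $\mathbbm{1}_R$, and then tune $L$. Both methods hinge on exactly the same structural fact, that the available $\ell^2$ normalizations of $k$ and $F_i$ are each used once and no fourth power of $F_i$ is ever produced, which is what makes the bound $\epsilon$-uniform; you identify this obstruction explicitly and your free parameter $L$ plays the role that the paper's exponent $4/3$ plays implicitly. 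Your route is somewhat more modular (Schur test plus AM--GM are off-the-shelf) and makes the uniformity transparent, at the small cost of a wasted inequality (dropping $\mathbbm{1}_R$ in the first piece) that the optimization over $L$ recovers. Both arguments yield a bound of the form $C\delta^2(2R+\epsilon)^2$ with $R=4\sqrt{d}S+2\delta$, and both require $\epsilon$ bounded (say $\epsilon\le R$) to read off a constant independent of $\epsilon$; you state this caveat explicitly, which the paper leaves implicit.
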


\begin{proof}
By the alternative expression found in \eqref{eq:Orewrite} and the bounds on the interior expression found in \eqref{lem:Mdeltrest}, we have the bound
\begin{equation} \label{eq:Odelx1}
\begin{aligned}
    O^{\delta}_{x_1,x_2,\epsilon,M} &\le C \delta^2\sup_{\substack{k,F_1,F_2\\
    \frac{1}{\epsilon^4} \sum_{ z \in  \epsilon \mathbb{Z}^4} k^2(z) =1\\
    \frac{1}{\epsilon^4} \sum_{z \in  \epsilon \mathbb{Z}^4} (F_i(z))^2 =1}} \frac{1}{\epsilon^{2d}} \sum_{z_1,z_2 \in \epsilon \mathbb{Z}^4} \sqrt{k}(z_1) F_1(z_1) \\
    & \hspace{4 cm} \times \mathbbm{1}(|z_1 - z_2| \le 4 \sqrt{d} S + 2\delta) \sqrt{k}(z_2) F_2(z_2). 
\end{aligned}
\end{equation}
Notice now that $G_i(z) = \sqrt{k(z)} F_i(z)$ satisfies
$$
\frac{1}{\epsilon^4} \sum_{z \in \epsilon \mathbb{Z}^4} G_i(z)^{4/3} \text{d}z \le \left[\frac{1}{\epsilon^4} \sum_{z \in \epsilon \mathbb{Z}^4} k(z)^2 \right]^{1/3} \left[\frac{1}{\epsilon^4} \sum_{z \in \epsilon \mathbb{Z}^4} (F_i)^2(z)\right]^{2/3} \le 1.
$$
We thus see that,
\begin{equation*}
    \begin{aligned}
        & \frac{1}{\epsilon^{2d}} \sum_{z_1,z_2 \in \epsilon \mathbb{Z}^4} G_1(z_1)  \mathbbm{1}(|z_1 - z_2| \le 4 \sqrt{d} S + 2\delta) G(z_2)  \\
        & \le \left[\frac{1}{\epsilon^{2d}} \sum_{z_1,z_2 \in \epsilon \mathbb{Z}^4}(G_1(z_1))^{4/3} (G_2(z_2))^{4/3} \right]^{1/2}\\
        & \times \left[\frac{1}{\epsilon^4} \sum_{y \in \epsilon \mathbb{Z}^4}\mathbbm{1}(|y| \le 4 \sqrt{d}S + 2 \delta) \frac{1}{\epsilon^4}\sum_{z_1 \in \epsilon \mathbb{Z}^4}  (G_1(z_1))^{2/3} (G_2(z_1 + y))^{2/3}\right]^{1/2}\\
        & \le \left[\frac{1}{\epsilon^4} \sum_{y \in \epsilon \mathbb{Z}^4}  \mathbbm{1}(|y| \le 4 \sqrt{d}S + 2 \delta) \left\{\frac{1}{\epsilon^4} \sum_{z_1 \in \epsilon \mathbb{Z}^4}(G_1(z_1))^{4/3} \right\}^{1/2} \left\{ \frac{1}{\epsilon^4} \sum_{z_1 \in \epsilon \mathbb{Z}^4} (G_2(z_1 + y))^{4/3} \right\}^{1/2}\right]^{1/2}\\
        & \le [4 \sqrt{d} S + 2 \delta]^{2}.
    \end{aligned}
\end{equation*}
The sequence of steps follows from carefully applying the Cauchy-Schwarz inequality twice. 
Putting the above inequality into equation \eqref{eq:Odelx1} will give us the desired bound on $O^{\delta}_{x_1,x_2,\epsilon,M}$ uniformly in $x_1,x_2$ and $\epsilon$. 
\end{proof}

At this point, we finally have enough tools to assert the following theorem.
\begin{thm} \label{thm:oanaleps}
Let $M$ be a function of finite support that is bounded from above and bounded away from $0$ on its support.

Then, uniformly in $x_1,x_2,\epsilon$ and $M$, 
\begin{equation*}
\lim_{\epsilon \to 0} O_{x_1,x_2,\epsilon,M} \le O_M.
\end{equation*}
\end{thm}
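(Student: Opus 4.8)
The plan is to split $P_\tau$ into a continuous part and a part concentrated near the origin, kill the near-origin part using Corollary \ref{cor:awayfrom0}, and reduce the continuous part to a Riemann-sum estimate. Fix $\delta>0$ and let $C$ be the continuous function with $C\le P_\tau\le C+P_\tau\mathbbm{1}(|\cdot|\le\delta)$ from the appendix; concretely one may take $C=\min(P_\tau,P_\tau(\delta))$, which is bounded, continuous and decays at infinity, hence uniformly continuous on $\mathbb{R}^4$ with some modulus of continuity $\omega_C$. Using the inequalities $O_{x_1,x_2,\epsilon,M}\le O^C_{x_1,x_2,\epsilon,M}+O^\delta_{x_1,x_2,\epsilon,M}$ and $O^C_M\le O_M$ recorded earlier, together with Corollary \ref{cor:awayfrom0} (which gives $O^\delta_{x_1,x_2,\epsilon,M}\le C(\delta)$ uniformly in $x_1,x_2,\epsilon$, with $C(\delta)\to 0$), the theorem will follow once I show
\[
\limsup_{\epsilon\to 0}O^C_{x_1,x_2,\epsilon,M}\le O^C_M
\]
uniformly in $x_1,x_2$, and then let $\delta\to 0$.

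For this reduced statement I would argue in two steps. \emph{Localization.} Given $\eta>0$, pick near-optimal $(k,f_1,f_2)$ for $O^C_{x_1,x_2,\epsilon,M}$. The associated operator factors as multiplication by $\sqrt k(z)$, a convolution against the kernel built from $C$ and $M$ (bounded on the relevant space because $\int_{\mathbb{R}^4}P_\tau=1$ and $M$ has finite support), and again multiplication by $\sqrt k(z)$; since this convolution is smoothing while the normalization $\epsilon^4\sum_z k^2(z)=1$ is fixed, configurations in which $k$ is either very concentrated or very spread out give a vanishingly small form value, so a near-optimal $k$ has $\|k\|_\infty$ bounded above and its mass confined, after a common lattice translation of all $z$-variables (which leaves the form invariant), to a ball $B_N$ with $N=N(\eta)$ independent of $\epsilon$, $x_1$, $x_2$ and $M$. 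Truncating and renormalizing outside $B_N$ changes the value by at most $\eta$, so I may assume $k,f_1,f_2$ are supported in $\{z\in B_N\}$.

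\emph{Riemann-sum comparison.} Extend the discrete data to step functions, $\bar k(z)=k(z_\epsilon)$ and $\bar f_i(z,e)=f_i(z_\epsilon,e)$, where $z_\epsilon$ is the lattice point of the $\epsilon$-cell containing $z$; the normalizations give $\bar k\in K$ and $\bar f_i\in F_M$, hence $O^C_M$ bounds the value of the continuous form at $(\bar k,\bar f_1,\bar f_2)$. Writing that value as a sum over pairs of $\epsilon$-cells and comparing term by term with the lattice expression for $O^C_{x_1,x_2,\epsilon,M}$, on each cell $C(u_1+e_1-u_2-e_2)$ and $C(z_1+x_1+e_1-z_2-x_2-e_2)$ differ by at most $\omega_C(c\epsilon)$ (using $|u_i-z_i-x_i|\le c\epsilon$ since $x_i\in[-\epsilon,\epsilon]^4$), so the discrepancy between the two forms is $\omega_C(c\epsilon)$ times a bilinear quantity which, by the Cauchy--Schwarz inequality on the lattice, the normalizations and the support of $k,f_i$ in $B_N$, is bounded by a constant $c_N$ depending only on $N$ and $M$. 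Hence $O^C_{x_1,x_2,\epsilon,M}-\eta\le O^C_M+\omega_C(c\epsilon)\,c_N$; letting $\epsilon\to 0$ with $N$ fixed, then $\eta\to 0$, gives the claim. All constants depend only on $\sup M$, the lower bound of $M$ on its support, and $\supp M$, which yields the uniformity in $x_1,x_2$ and in $M$ within the stated class.

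The step I expect to be the main obstacle is the localization: making the radius $N$ independent of $\epsilon$ (and of $x_1,x_2$ and $M$). Once the near-optimizers are trapped in a fixed ball, the rest is a routine Riemann-sum estimate powered by the uniform continuity of $C$. The localization argument itself runs parallel to the compactness reductions of \cite[Section 3.2]{Chenbook} (in particular \cite[Lemma 3.2.4]{Chenbook}), whose strategy I would adapt; the exponential decay of $P_\tau$ (hence of $C$) and the boundedness of $\supp M$ are what make it go through uniformly.
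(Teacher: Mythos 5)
Your overall decomposition is the same as the paper's: write $O_{x_1,x_2,\epsilon,M}\le O^C_{x_1,x_2,\epsilon,M}+O^\delta_{x_1,x_2,\epsilon,M}$, dispatch the $\delta$-piece uniformly via Corollary \ref{cor:awayfrom0}, and then reduce the $C$-piece to $O^C_M\le O_M$. The divergence is entirely in how you handle $O^C_{x_1,x_2,\epsilon,M}$. You use an \emph{additive} modulus of continuity $\omega_C$, which yields a discrepancy of the form $\omega_C(c\epsilon)$ times a bilinear quantity in $(k,f_1,f_2)$. That bilinear quantity is not controlled by the normalizations alone (it is essentially the form with kernel $C$ replaced by the constant $1$), which is exactly why you are forced to add a localization step confining near-optimizers to a ball $B_N$ with $N$ independent of $\epsilon,x_1,x_2,M$. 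You acknowledge that this localization is the main obstacle, and you do not prove it; you only cite the compactness reductions in Chen's book as an analogy. As written this is a genuine gap: without the $\epsilon$-uniform, $M$-uniform localization lemma, the additive error $\omega_C(c\epsilon)\cdot c_N$ has no uniform bound and the Riemann-sum comparison does not close.

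The paper's proof avoids localization entirely by using a \emph{multiplicative} (relative) continuity bound: for any $\kappa>0$ one can choose $\epsilon$ small enough that $|C(z+\alpha)-C(z)|\le\kappa\,C(z)$ uniformly in $z$ for all $|\alpha|\le\epsilon$ (this holds because $C$ is bounded, bounded away from $0$ on compacts, and decays exponentially at infinity, so the ratio $C(z+\alpha)/C(z)$ is uniformly close to $1$). Lifting the discrete $(k_\epsilon,f_\epsilon)$ to step functions in $K,F_M$ then gives $O^C_{x_1,x_2,\epsilon,M}\le(1+\kappa)\,O^C_M$ directly — the error is proportional to the optimal value itself, so no bound on the bilinear form is needed and no compactness of near-optimizers is required. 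I would recommend you either switch to this multiplicative-continuity comparison, which is both shorter and manifestly uniform in $x_1,x_2,M$, or else actually carry out the localization lemma with the required uniformity in $\epsilon$ — the latter is possible in the spirit of Chen's Lemma 3.2.4 but is more work than the step deserves, and you would still need to verify that the truncation error and the bound $c_N$ are uniform over the class of admissible $M$ and over $x_1,x_2\in[-\epsilon,\epsilon]^4$.
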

\begin{proof}
It is manifestly clear that $\lim_{\epsilon \to 0} O^{C}_{x_1,x_2,\epsilon,M} \le O^C_M$, by using the continuity of the function $C$.  Namely, one can take functions in $K_\epsilon$ and $F_{\epsilon,M}$ and generate a function in $K$, $F$ respectively by setting $k(z)= k_{\epsilon}\left(\lfloor \frac{z}{\epsilon} \rfloor \right)$ with $k$ in $K$ and $k_{\epsilon}$ in $K_{\epsilon}$ and similarly for $F$. 
For any value $\kappa$, one can find $\epsilon$ small enough so that $|C(z+ \alpha ) - C(z)| \le \kappa C(z)$ for $|\alpha| \le \epsilon$. 
Thus,
$$
\begin{aligned}
& (\epsilon)^{2d} \sum_{z_1 \in \epsilon \mathbb{Z}^4,z_2 \in \epsilon \mathbb{Z}^4}
   \int_{(\mathbb{R}^4)^2} \sqrt{k_{\epsilon}}(z_1) (f_1)_{\epsilon}(z_1,e_1) M(e_1) C(z_1 + x_1+e_1 - z_2 - x_2-e_2) M(e_2) \\
   & \times \sqrt{k_{\epsilon}}(z_2) (f_2)_{\epsilon}(z_2,e_2) \text{d}e_1 \text{d}e_2\\
    & \le [1+\kappa] \int_{(\mathbb{R}^4)^2} \text{d}z_1 \text{d}z_2\int_{(\mathbb{R}^4)^2} \sqrt{k_{\epsilon}}(z_1) (f_1)_{\epsilon}(z_1,e_1) M(e_1) C(z_1 +e_1 - z_2 -e_2) M(e_2) \\
   & \times \sqrt{k_{\epsilon}}(z_2) (f_2)_{\epsilon}(z_2,e_2) \text{d}e_1 \text{d}e_2.
\end{aligned}
$$
This is true for all $f_1,f_2$ and $k$. By choosing the supremum of $f_1$,$f_2$ and $k$ on the left hand side, we see that $\lim_{\epsilon \to 0} O^{C}_{x_1,x_2,\epsilon,M} \le [1+\kappa] O^C_{M}$. 
As $\epsilon \to 0$, the factor $\kappa$ goes to $0$. This gives the claimed statement that $\lim_{\epsilon} O^{C}_{x_1,x_2,\epsilon,M} \le O^C_M \le O_M$.

Finally, we observe that $\lim_{\epsilon \to 0}O_{x_1,x_2,\epsilon,M} \le \lim_{\epsilon \to 0} O^C_{x_1,x_2,\epsilon,M} + \lim_{\epsilon \to 0} O^{\delta}_{x_1,x_2,\epsilon,M} \le O_M + C(\delta)$. Here, we applied our earlier claim on $O^C$ along with Corollary \ref{cor:awayfrom0}. Since we can take $\delta\to 0$ after all these steps, we have the desired inequality. 
\end{proof}

\section*{Acknowledgment}
The authors would like to thank Amir Dembo for his useful suggestions. 
The authors are also grateful to Makoto Nakamura for his helpful comments.


\begin{thebibliography}{99}




















\bibitem{As5} Asselah A., Schapira B. and Sousi P. Capacity of the range of random walk on $\Z^4$. Ann. Probab. 47 (2019), 1447-1497.




\bibitem{BC04} Bass, R.F.and Chen, X. Self-intersection local time: Critical exponent, large deviations, and laws of the iterated logarithm. Ann. Probab. 32 (2004), no. 4, 3221-3247. 

\bibitem{BCR06} Bass R. F., Chen, X. and  Rosen J. Moderate deviations and laws of the iterated logarithm for the renormalized self-intersection local times of planar random walks. Electron. J. Probab. 11 (2006), no. 37, 993-1030. 

\bibitem{BCR} Bass R. F., Chen, X. and  Rosen J. Moderate deviations for the range of planar random walks. Mem. Amer. Math. Soc. 198 (2009), no. 929, viii+82 pp







\bibitem{Chen04} Chen, X. Exponential asymptotics and law of the iterated logarithm for intersection local times of random walks. Ann. Probab. 32 (2004), no. 4, 3248-3300. 

\bibitem{Chen05} Chen, X. Moderate deviations and law of the iterated logarithm for intersections of the ranges of random walks. Ann. Probab. 33 (2005), no. 3, 1014-1059.

\bibitem{Chenbook} Chen, X. Random Walk Intersections:Large Deviations and Related Topics, American Mathematical Society,  Providence, Rhode Island (2010)


\bibitem{CL} Chen, X. and Li, W. Large and moderate deviations for intersection local times. Probab. Theory Related Fields 128 (2004), no. 2, 213-254. 





\bibitem{DO} Dembo A. and Okada I. Capacity of the range of random walk: The law of the iterated logarithm.(https://arxiv.org/abs/2208.02184)


\bibitem{FY15} Feng B. and Yuan X. On the Cauchy-Problem for the Schrodinger-Hartree equation. Evolution Equations and Control Theory 4(2015), no. 4, 431-445. 










\bibitem{KM} K\"{o}nig, W. and M\"{o}rters, P. Brownian intersection local times: exponential moments and law of large masses. Trans. Amer. Math. Soc. 358 (2006), no. 3, 1223-1255. 

\bibitem{LL10} Lawler G. F. and Limic V.  Random walk: a modern introduction. Cambridge University Press. 






\bibitem{MS} Moroz, V. and Van Schaftingen, J. Groundstates of nonlinear Choquard equations: existence, qualitative properties and decay asymptotics. 
J. Funct. Anal. 265 (2013), no. 2, 153-184.











\end{thebibliography}
\end{document}